\documentclass[reqno,12pt]{amsart} 
\usepackage{amsmath}
\usepackage{amssymb}
\usepackage{amsthm}
\textwidth 16cm
\setlength {\evensidemargin} {0 cm}
\setlength {\oddsidemargin} {0 cm}

\newcommand\NoBlackBoxes{\global\overfullrule0pt}
\NoBlackBoxes
\parindent 15 pt
\theoremstyle{plain} % default
\newtheorem{theorem}{Theorem} % [section]
\newtheorem{lemma}[theorem]{Lemma}
\newtheorem{proposition}[theorem]{Proposition}

\newtheorem{corollary}[theorem]{Corollary}
\def\4{\kern1pt}

\def\6{\vphantom0}

\def\8{\kern-10pt}
\def\7#1{_{(#1)}}

%\newtheorem{step}{Step}
%\newtheorem*{step3'}{Step 3'} 
%\newtheorem*{step4'}{Step 4'}

% To reset the step-counter
%\setcounter{step}{1000}

%\newtheorem{fact}[equation]{Fact}
%\newtheorem{conjecture}[equation]{Conjecture}
\theoremstyle{definition}
\newtheorem*{assumption (H1)}{Assumption (H1)}
\newtheorem*{assumption (H2)}{Assumption (H2)}

\theoremstyle{remark}
\newtheorem{remark}[theorem]{Remark}
%\newtheorem{remarks}[equation]{Remarks}

%\newtheorem{claim}[equation]{Claim}

%\renewcommand{\baselinestretch}{1.2}

% Equation numbering tied to section numbering,
% automatic reset of the^Eequation counter in new sections.
\numberwithin{equation}{section}
\numberwithin{theorem}{section}
% Turn off series logo and copyright note
\makeatletter
\let\serieslogo@\relax
\let\@setcopyright\relax

% For creating a special mark
\def\speciallabelmark#1{\def\@currentlabel{#1}}
\makeatother

%\speciallabelmark{1000'}

\newcommand{\Cal}{\mathcal}

\newcommand{\beqa}{\begin{eqnarray}}
\newcommand{\beqan}{\begin{eqnarray*}}
\newcommand{\eeqa}{\end{eqnarray}}
\newcommand{\eeqan}{\end{eqnarray*}}
\def\beq#1\eeq{\begin{equation}#1\end{equation}}

\begin{document}
%{\Huge Draft}

\def\ffrac#1#2{\raise.5pt\hbox{\small$\4\displaystyle\frac{\,#1\,}{\,#2\,}\4$}}
\def\ovln#1{\,{\overline{\!#1}}}
\def\ve{\varepsilon}
\def\kar{\beta_r}

\title{Asymptotic Expansions in the~CLT in Free Probability}

\author{G. P. Chistyakov$^{1,2,3}$}
\thanks{1) Faculty of Mathematics , 
University of Bielefeld, Germany.} 
\thanks{2) Institute for Low Temperature Physics and Engineering, 
Kharkov, Ukraine}
\thanks{3) Research supported by SFB 701.}
\address
{Gennadii Chistyakov \newline
Fakult\"at f\"ur Mathematik\newline
Universit\"at Bielefeld\newline
Postfach 100131\newline
33501 Bielefeld \newline
Germany}
%Institute for Low Temperature Physics and Engineering\newline
%National Academy of Sciences of Ukraine\newline
%47 Lenin Ave.\newline
%61103 Kharkov \newline
%Ukraine \smallskip}
\email {chistyak@math.uni-bielefeld.de} 

\author{F. G\"otze$^{1,3}$}
%\thanks{1) Faculty of Mathematics , 
%University of Bielefeld, Germany.}
%\thanks{2) Research supported by CRC 701.}
\address
{Friedrich G\"otze\newline
Fakult\"at f\"ur Mathematik\newline
Universit\"at Bielefeld\newline
Postfach 100131\newline
33501 Bielefeld \newline
Germany}
\email {goetze@math.uni-bielefeld.de}

\date{December, 2011}

\subjclass
{Primary 46L50, 60E07; secondary 60E10} 
\keywords  {Free random variables, Cauchy transform, free convolutions,
Central Limit Theorem, asymptotic expansion, free entropy of sums}

\maketitle
\markboth{ G. P. Chistyakov and F. G\"otze}{Edgeworth's Expansions
in Free Probability Theory}

\begin{abstract}
We prove Edgeworth type expansions for distribution functions of sums
of free random variables under minimal moment conditions. The proofs are based on
the analytic definition of free convolution. We apply these results to the expansion of densities to derive
expansions for the free entropic distance of sums to the Wigner law.

%free random variables

%. Our approach relies on the analytic
%definition of additive free convolution.

%Based on an~analytical approach to the definition of additive free
%convolution of probability measures on the~real line 
%we prove free analogs of Esseen's expansions for the~distribution of sums
%of independent identically distributed random variables
%in classical Probability Theory.  
\end{abstract}

\section{Introduction}

In recent years a number of papers are devoted to limit theorems
for the free convolutions of probability measures. Free convolutions were introduced
by D. Voiculescu~\cite{Vo:1986}, \cite{Vo:1987}.
The~key concept is the~notion of freeness,
which can be interpreted as a~kind of independence for
noncommutative random variables. As in the~classical probability where
the~concept of independence gives rise to the~classical convolution, 
the~concept of freeness leads to a~binary operation on the~probability measures, 
the~free convolution. Many classical results 
in the~theory of addition of independent random variables have their
counterparts in Free Probability, such as the~Law of Large Numbers,
the~Central Limit Theorem, the~L\'evy-Khintchine formula and others.
We refer to Voiculescu, Dykema and Nica \cite{Vo:1992} and Hiai 
and Petz~\cite{HiPe:2000} for an~introduction 
to these topics. Bercovici and Pata \cite{BeP:1999} established
the~distributional behavior of sums of free identically distributed 
random variables and described explicitly the~correspondence 
between limit laws for free and classical additive convolution. 
Using subordination functions for the~definition of the~additive 
free convolution, 
Chistyakov and G\"otze \cite{ChG:2005a} generalized the~results of 
Bercovici and Pata to the~case of free
non-identically distributed random variables. It was shown that 
the~parallelism found by Bercovici and Pata holds in the~general case of
free non-identically distributed random variables (see \cite{BeW:2006} 
as well). This approach 
allowed us to obtain estimates of the~rate
of convergence of distribution functions of free sums. An analog of the~Berry-Esseen
inequality was proved for the~semicircle approximation in~\cite{ChG:2005a}. 
For related results see~\cite{Ka:2007}.

In this paper we 
obtain an~analogue of Edgeworth expansion in the~Central Limit Theorem 
(CLT for short) for free identically distributed random variables, based on the~method
of subordination functions.
In addition we shall give a~bound for the~remainder term in this 
expansion. In order to deduce this expansion we establish an approximation
of distribution of normalized sums of free random variables by the~free Meixner distributions.
An interesting feature of our expansions is that they hold without the not moment related
assumptions used in the~classical probability.
For bounded free random variables we give an asymptotic expansion for the density
of normalized sums. With the help of this expansion we obtain the rate of convergence
in the entropic free CLT.

The~paper is organized as follows. In Section~2 we formulate and
discuss the main results of the~paper. In Section~3 and 4 we formulate
auxiliary results. In Section~5 we describe a~formal expansion in the~Free
CLT and in Sections~6 and 7 we prove Edgeworth's expansion 
in the~CLT for free identically distributed random variables. Since the~proofs
of Theorem~\ref{2.1*th} and Theorem~\ref{2.1th} (see Section~2) are similar, we give a~proof of 
Theorem~\ref{2.1th} in details in Section~5 and an~outline of the~proof of Theorem~\ref{2.1*th} in Appendix.
In Section 8 we obtain the local CLT for free bounded identically distributed random variables.
In Section 9 we obtain the rate of convergence in the free CLT in total variation metric 
and in Section 10 we derive the asymptotic expansion for the free entropy of normalized sums.

\section{Results}
Denote by $\mathcal M$ the~family of all Borel probability measures
defined on the~real line $\mathbb R$. Define on $\mathcal M$
the~compositions laws denoted $*$ and $\boxplus$ as follows.
For $\mu,\nu\in\mathcal M$, let $\mu*\nu$ denote
the~classical convolution of $\mu$ and $\nu$. 
In probabilistic terms, $\mu*\nu=\mathcal L(X+Y)$, where $X$ and $Y$ are
independent random variables with $\mu=\mathcal L(X)$ and $\nu=\mathcal L(Y)$, 
respectively. Let $\mu\boxplus\nu$ be
the~free (additive) convolution of $\mu$ and $\nu$ introduced by 
Voiculescu~\cite{Vo:1986} for compactly supported measures. 
Free convolution was extended by
Maassen~\cite{Ma:1992} to measures with finite variance and by Bercovici
and Voiculescu~\cite{BeVo:1993} to the~class $\mathcal M$.
Thus, $\mu\boxplus\nu=\mathcal L(X+Y)$,
where $X$ and $Y$ are free random variables such that $\mu=\mathcal L(X)$ and $\nu=\mathcal L(Y)$.
There are free analogues of multiplicative
convolutions as well; these were first studied in Voiculescu~\cite{Vo:1987}.

Henceforth $X,X_1,X_2,\dots$ stands for a~sequence of identically distributed random variables with
distribution $\mu=\mathcal L(X)$. Define 
\begin{equation}\notag
m_k:=\int_{\mathbb R}u^k\,\mu(du)\quad\text{and}\quad
\beta_q:=\int_{\mathbb R}|u|^q\,\mu(du),
\end{equation}
where $k=0,1,\dots$ and $q>0$.

The~classical CLT says that if $X_1,X_2,\dots$ are independent
and identically distributed random variables with a~probability 
distribution $\mu$ such that $m_1=0$ and $m_2=1$, 
then the~distribution function $F_n(x)$ of 
\begin{equation}\label{2.1}
Y_n:=\frac{X_1+X_2+\dots +X_n}{\sqrt n}
\end{equation}
tends to the~standard Gaussian law $\Phi(x)$ as $n\to\infty$ uniformly
in $x$.

A~free analogue of this classical result was proved by Voiculescu~\cite{Vo:1985} for bounded
free random variables and 
later generalized by Maassen~\cite {Ma:1992} to unbounded random variables.
Other generalizations can be found in \cite{BeVo:1995}, \cite{BeP:1999}, \cite{ChG:2005a}, 
\cite {Ka:2007}--\cite {Ka:2007b}, \cite{P:1996}, \cite{Vo:2000}, \cite{Wa:2010}. 
When the~assumption of independence is 
replaced by the~freeness of the~noncommutative random variables
$X_1,X_2,\dots,X_n$, the~limit distribution function of (\ref{2.1}) is 
the~semicircle law $w(x)$, i.e., the~distribution function 
with the~density $p_w(x):=\frac 1{2\pi}\sqrt{(4-x^2)_+}, \,x\in\mathbb R$, 
where $a_+:=\max\{a,0\}$ for $a\in\mathbb R$. Denote by $\mu_w$ the~probability measure with
the~distribution function $w(x)$.

Write $\varphi(x):=\frac 1{\sqrt{2\pi}}e^{-x^2/2}$ and denote by
$H_m(x):=(-1)^m e^{x^2/2}\frac{d^m}{dx^m}e^{-x^2/2}$ the~Hermite polynomial
of degree $m$.

Assume that the~random variables $X_j$ are independent and have moments of all orders.
For the~distribution function $F_n(x)$ of $Y_n$ there exists a~formal expansion in
a~power series in $1/\sqrt n$ (see~\cite{GKo:1968}, \cite{Pe:1975}):
\begin{equation}\label{2.1a}
F_n(x)=\Phi(x)+\varphi(x)\sum_{p=1}^{\infty}\frac{Q_{p}(x)}{n^{p/2}},
\end{equation}
where
$$
Q_{p}(x)=-\sum H_{p+2s-1}(x)\prod_{m=1}^{p}\frac 1{k_m!}
\Big(\frac{\gamma_{m+2}}{(m+2)!}\Big)^{k_m}
$$
and $\gamma_m$ is the~cumulant of order $m$ of random variable $X$.
In the~last equality the~summation on the~right-hand side is carried
out over all nonnegative integer solutions $(k_1,\dots,k_m)$ of
the~equations 
\begin{equation}
k_1+2k_2+\dots+p k_{p}=p\quad\text{and}\qquad s=k_1+\dots+k_{p}.
\end{equation}
Note that $Q_1(x)=-m_3H_2(x)/6$.

In terms of characteristic functions (\ref{2.1a}) has the~form
\begin{equation}\label{2.1aa}
\int_{-\infty}^{\infty}e^{itx}\,dF_n(x)=e^{-t^2/2}+\sum_{m=1}^{\infty}
\frac{P_m(t)}{n^{m/2}}e^{-t^2/2},
\end{equation}
where
$$
\int_{-\infty}^{\infty}e^{itx}\,dQ_m(x)=P_m(t)e^{-t^2/2}.
$$

Esseen~\cite{Es:1945} proved that if the~random variables $X_j$ are independent, non-lattice distributed
and $\beta_3<\infty$, 
then $F_n(x)$ admits the~following asymptotic expansion  
\begin{equation}\label{2.2}
F_n(x)=\Phi(x)-\frac{m_3}{6\sqrt{n}}H_2(x)\varphi(x)+o(1/\sqrt n)
\end{equation}
which holds uniformly in $x$.

If the~random variables $X_j$ are independent and are lattice distributed, that is
they take values in an~arithmetic
progression $\{a+kh;k=0,\pm 1,\dots\}$ ($h$ being maximal), and $\beta_3<\infty$, then
\begin{equation}\label{2.3}
F_n(x)=\Phi(x)+\frac 1{\sqrt{n}}\varphi(x)\Big(-\frac{m_3}{6}H_2(x)
+hT\Big(\frac{x\sqrt n}h-\frac{an}h\Big)\Big)+o(1/\sqrt n),
\end{equation}
uniformly in $x$, where $T(x):=[x]-x+1/2$. 

If absolute moments $\beta_k$ of order $k>3$ exist, then generalizations of the~asymptotic
expansions (\ref{2.2}) and (\ref{2.3}) hold under 
additional conditions on the~characteristic function of $X$ \cite{Pe:1975}.

An~analytical approach using subordination functions
allowed us to give explicit estimates for the~rate of convergence of 
distribution functions of $Y_n$ in the~case of free random variables. 
We demonstrated this (see 
\cite {ChG:2005a}) by proving
a~semicircle approximation theorem (an~analogue
of the~Berry-Esseen inequality \cite{Pe:1975}, p. 111). In this paper 
we shall establish Edgeworth expansion in the~semicircle approximation theorem and a~complete analogue
of the~Berry-Esseen inequality for identically distributed free random variables.

We now formulate the~main results of the~paper. As before we denote
by $F_n(x)$ the~distribution function of $Y_n$ where $X_j$ are free
random variables with the same distribution $\mu$. Assume 
as well that $X_j$ have moments of arbitrary order and 
$m_1=0,\,m_2=1$. We denote by $\mu_n$ the~distribution of $Y_n$.
Denote by $U_m(x)$ the~Chebyshev polynomial of   
the~second kind of degree $m$, i. e., 
$$
U_m(x)=U_m(\cos\theta)
:=\frac{\sin(m+1)\theta}{\sin\theta},\quad m=1,2,\dots.
$$
It is easy to see $U_1(x)=2x,\,U_2(x)=4x^2-1,\,U_3(x)=4x(2x^2-1)$.

It turns out that there exists an~analogue of the~formal
expansion (\ref{2.1aa}) for $F_n(x)$. To formulate it we need the~following notation.
Define the~Cauchy transform of $\mu\in\mathcal M$ by
\begin{equation}\label{2.3a}
G_{\mu}(z)=\int_{\mathbb R}\frac{\mu(dx)}{z-x},\qquad z\in\mathbb C^+,
\end{equation}
where $\mathbb C^+$ denotes the~open upper half of
the~complex plane.
The~formal expansion has the~form
\begin{equation}\label{2.3b}
G_{\mu_n}(z)=G_{\mu_w}(z)+
\sum_{k=1}^{\infty}\frac{B_k(G_{\mu_w}(z))}{n^{k/2}},
\end{equation}
where
\begin{equation}\label{2.3bb}
B_k(z)=\sum c_{p,m}\frac {z^p}{(1/z-z)^m}
\end{equation} 
with real coefficients $c_{p,m}$ which depend on 
the~ free cumulants $\alpha_3,\dots,\alpha_{k+2}$ 
and do not depend on $n$.
The~free cumulants will be defined in Section~3, (\ref{3.7}). Here we note
that $\alpha_3=m_3$ and $\alpha_4=m_4-2$.
The~summation on the~right-hand side of (\ref{2.3bb}) is taken
over a~finite set of non-negative integer pairs $(p,m)$. 
The~coefficients $c_{p,m}$ can be calculated explicitly. For the~cases
$k=1,2$ we have 
$$
B_1(z)=\alpha_3\frac {z^3}{1/z-z}
$$
and
\begin{align}\label{2.3bbb}
B_2(z)=
\big(\alpha_4-\alpha_3^2\big)\frac {z^4}{1/z-z}
+\alpha_3^2\Big(\frac{z^5}{(1/z-z)^2}+\frac{z^2}{(1/z-z)^3}\Big).
\end{align}

Note that
\begin{equation}\label{2.3c}
B_1(G_{\mu_w}(z))=\frac {\alpha_3}{\sqrt{z^2-4}}G_{\mu_w}^3(z)
=-\alpha_3\int_{-2}^2\frac 1{z-x}\,d\Big(\frac 13 U_2(x/2)
p_w(x)\Big),\quad z\in\mathbb C^+.
\end{equation}
If $\alpha_3=0$, then 
\begin{equation}\label{2.3d}
B_2(G_{\mu_w}(z))=\frac {\alpha_4}{\sqrt{z^2-4}}G_{\mu_w}^4(z)=
-\alpha_4\int_{-2}^2\frac 1{z-x}\,d\Big(\frac 14 U_3(x/2)
p_w(x)\Big),\quad z\in\mathbb C^+.
\end{equation}

Now we can formulate a~counterpart of Edgeworth expansion in the~Free CLT.
We obtain this counterpart from the~following results in which we establish
an~approximation of the~measures $\mu_n$ by the~free Meixner measures.
Consider the~three-parameter family of probability measures $\{\mu_{a,b,d}:
a\in\mathbb R, b<1, d<1\}$ with the~reciprocal Cauchy transform
\begin{equation}\label{2.3h}
\frac 1{G_{\mu_{a,b,d}}(z)}=a+\frac 12\Big((1+b)(z-a)+\sqrt{(1-b)^2(z-a)^2-4(1-d)}\Big),
\quad z\in\mathbb C,
\end{equation}
which we will call the~free centered Meixner measures (i.e. with mean zero). 
In this formula we choose the~branch of the~square root determined by the~condition
$\Im z>0$ implies $\Im (1/G_{\mu_{a,b,d}}(z))\ge 0$.
These measures are counterparts of the~classical measures discovered by Meixner~\cite{Me:1934}.
The~free Meixner type measures occurred in many places in the~literature, see
~\cite{An:2003}, \cite{BoBr:2006}, \cite{BoSp:1991}, \cite{BoLeSp:1996}, \cite{Ke:1959},
\cite{Mc:1981}, \cite{SaYo:2001}.

Assume that $m_4<\infty,m_1=0,m_2=1$ and denote 
\begin{equation}\label{2.3****}
a_n:=\frac{m_3}{\sqrt n},\quad b_n:=\frac{m_4-m_3^2-1}n,\quad
d_n:=\frac{m_4-m_3^2}n,\quad n\in\mathbb N.
\end{equation}
In the~sequel we will use the~free Meixner measures of the~form $\mu_{0,0,0}=w,\mu_{a_n,0,0}$ if 
$\beta_3<\infty,m_1=0,m_2=1$ 
and $\mu_{a_n,b_n,d_n}$ if $m_4<\infty,m_1=0,m_2=1$ and $n>m_4$. 

Recall that a~probability measure $\mu$ is $\boxplus$-infinitely divisible if for every $n\in\mathbb N$
there exists $\nu_n\in\mathcal M$ such that $\mu=\nu_n\boxplus\nu_n\boxplus\dots
\boxplus\nu_n$ ($n$ times).

Using the~results of Saitoh and Yoshida~\cite{SaYo:2001}, we will show in Section~4
that under the~assumptions
$\beta_3<\infty$ and $n\ge m_3^2$ the~free Meixner measure
$\mu_{a_n,0,0}$ is absolute continuous with a~density of the~form (\ref{2.3g}), where $a=a_n,b=0,d=0$,
and $\mu_{a_n,0,0}$ is $\boxplus$-infinitely divisible.
Under the~assumptions $m_4<\infty$ and $n\ge 3m_4$ the~free Meixner measure $\mu_{a_n,b_n,d_n}$ 
is absolute continuous with a~density of the~form (\ref{2.3g}), where $a=a_n,b=b_n,d=d_n$,
and $\mu_{a_n,b_n,d_n}$ is $\boxplus$-infinitely divisible.

We now introduce some further notations. Assume that $\beta_q<\infty$ for some $q\ge 2$. Introduce
the Lyapunov fractions
\begin{equation}\label{2.4*}
L_{qn}:=\frac {\beta_q}{n^{(q-2)/2}}\quad\text{and let}\quad
\rho_{q}(\mu,t):=\int_{|u|>t}|u|^{q}\,\mu(du),\,\, t>0.
\end{equation}
Write
$$
q_1:=\min\{q,3\},\quad   q_2:=\min\{q,4\}, 
%\quad q_2:=q_1-3, 
\quad q_3:=\min\{q,5\}. 
%\quad q_4:=q_3-4.
$$ 
Then denote, for $n\in\mathbb N$, 
\begin{equation}\label{2.4**}
\eta_{qs}(n):=\inf_{0<\varepsilon\le 10^{-1/2}}g_{qns}(\varepsilon),\quad\text{where}\quad
g_{qns}(\varepsilon):=\varepsilon^{s+2-q_s}+\frac{\rho_{q_s}(\mu,\varepsilon\sqrt n)}{\beta_{q_s}}\varepsilon^{-q_s} 
\end{equation}
provided that $\beta_q<\infty,\,q\ge s+1$, for $s=1,2,3$, respectively.
It is easy to see that $0<\eta_{qs}(n)\le 10^{1+s/2}+1$ for $s+1\le q_s\le s+2$ and $\eta_{qs}(n)\to 0$
monotonically as $n\to\infty$ if $s+1\le q_s<s+2$, and $\eta_{qs}(n)\ge 1,\,n\in\mathbb N$, if $q_s=s+2$.

By agreement the~symbols $c,c_1,c_2,\dots$ and $c(\mu),c_1(\mu),c_2(\mu),\dots$ shall
denote absolute positive constants and positive constants depending on $\mu$ only, respectively. By~$c$ and $c(\mu)$
we denote generic constants in different (or even in the same) formulae. The symbols $c_1,c_2,\dots$
and $c_1(\mu),c_2(\mu),\dots$ are applied for  explicit constants.

\begin{theorem}\label{2.1*th}
Assume that $X_j,\,j=1,\dots$, are free, $\beta_q<\infty$ with some $q\ge 2$ and 
$m_1=0,\,m_2=1$. Then, for $n\in\mathbb N$,
\begin{equation}\label{2.4a*}
\sup_{x\in\mathbb R}|F_n(x)-w(x)|\le c 
\begin{cases}
\eta_{q1}(n) L_{qn}+n^{-1},&\text{if}\quad \beta_q<\infty,\,2\le q< 3\\
L_{3n},&\text{if}\quad \beta_q<\infty,\,q\ge 3.
\end{cases} 
\end{equation}
\end{theorem}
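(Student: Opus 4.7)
The plan is to adapt the subordination/Cauchy-transform approach of \cite{ChG:2005a} so that it accommodates absolute moments of fractional order $q\in[2,3)$. The entire argument is carried out at the level of Cauchy transforms, and the Kolmogorov bound (\ref{2.4a*}) is recovered at the end via an Esseen-type smoothing inequality.

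Concretely, I would first use the subordination property of $n$-fold free convolution to write
\begin{equation*}
G_{\mu_n}(z)=\sqrt n\, G_\mu\!\left(Z_n(z)/\sqrt n\right),\qquad z\in\mathbb C^+,
\end{equation*}
with an implicit equation determining $Z_n(z)-z$ via the reciprocal Cauchy transform $F_\mu=1/G_\mu$. The key analytic input is a Taylor expansion $F_\mu(w)=w-1/w+R_\mu(w)$ in a half-plane $\Im w\ge y_0$; the remainder is controlled by truncating $\mu$ at the level $\varepsilon\sqrt n$, using $m_1=0$, $m_2=1$ to eliminate the two leading Taylor terms, and splitting into a bulk part of order $\beta_{q_1}\varepsilon^{3-q_1}/n^{(q_1-2)/2}$ and a tail part of order $\rho_{q_1}(\mu,\varepsilon\sqrt n)/n^{(q_1-2)/2}$. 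Optimizing over $\varepsilon\in(0,10^{-1/2}]$ reproduces precisely the combination $\eta_{q1}(n)L_{qn}$. Feeding this expansion into the fixed-point equation for $Z_n$ and iterating by a contraction on a domain $\{z=x+iy:y\ge y_n\}$ with $y_n$ a small negative power of $n$ yields a pointwise bound
\begin{equation*}
|G_{\mu_n}(z)-G_{\mu_w}(z)|\le \frac{c(\mu)}{y^{\alpha}}\bigl(\eta_{q1}(n)L_{qn}+n^{-1}\bigr)
\end{equation*}
for some explicit $\alpha>0$. The reference distribution $\mu_w$ enters via the identity $F_{\mu_w}(z)=z-G_{\mu_w}(z)$, which is the trivial case $a=b=d=0$ of (\ref{2.3h}).

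Finally I would apply the smoothing lemma of \cite{ChG:2005a} to convert this Cauchy-transform bound into a uniform bound on $|F_n-w|$, modulo an additive term of order $y_n$; balancing then fixes $y_n$ and produces (\ref{2.4a*}). The range $|x|\gg 1$, where the smoothing estimate is weak, is handled separately via $|F_n(x)-w(x)|\le 1$ combined with Chebyshev's inequality applied to the variance $m_2=1$ of $\mu_n$. The case $\beta_3<\infty$ corresponds to $\varepsilon=1$ and reduces immediately to the pure $L_{3n}$ bound. The main obstacle is the joint optimization in the low-moment regime $q<3$: the truncation level $\varepsilon\sqrt n$ in the expansion of $F_\mu$, the depth $y_n$ of the contraction half-plane for the subordination equation, and the parameter that the smoothing step consumes must all be chosen in harmony; ensuring that $\eta_{q1}(n)L_{qn}$ emerges with a \emph{clean} additive $n^{-1}$ correction rather than as a spurious polynomial excess requires careful global bookkeeping of all intermediate remainders.
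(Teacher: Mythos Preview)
Your outline captures the right ingredients --- subordination, truncation at level $\varepsilon\sqrt n$, optimization over $\varepsilon$ yielding $\eta_{q1}(n)L_{qn}$ --- but the mechanism by which you pass from Cauchy-transform bounds to the Kolmogorov bound is different from the paper's, and as written it would not give the sharp result.

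The paper does \emph{not} use a smoothing inequality with a height parameter $y_n$ to be balanced. Instead it first replaces $\mu$ by a truncated and renormalized measure $\mu^*$ supported in $[-\tfrac13\sqrt n,\tfrac13\sqrt n]$ (paying $\eta_{q1}(n)L_{q_1n}$ for this via Proposition~\ref{3.3b}), so that the subordination function $S_n(z)=Z(\sqrt n z)/\sqrt n$ extends continuously to the real axis with $|S_n|\ge 1/3$ (Lemmas~\ref{l7.4}, \ref{l7.5}). It then derives an explicit \emph{cubic} equation $S_n^3-zS_n^2+(1+r_n)S_n-(1+r_n)z/n=0$ and locates the relevant root via Rouch\'e's theorem; this is the algebraic core and is what produces the clean additive $n^{-1}$. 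Finally, it obtains an $L^1$ bound
\[
\int_{B_4}\bigl|G_{\mu_w}(x+i\varepsilon)-G_{\mu_n^*}(x+i\varepsilon)\bigr|\,dx
\le c\bigl(\eta_{q1}(n)L_{q_1n}+n^{-1}\bigr)
\]
uniformly in $\varepsilon\in(0,1]$, on the interval $B_4=[-2+h_3,2-h_3]$ with $h_3\asymp\eta_{q1}(n)L_{q_1n}+n^{-1}$, and applies Stieltjes--Perron inversion directly (letting $\varepsilon\downarrow 0$). The complement $[-2,2]\setminus B_4$ carries semicircle mass $O(h_3^{3/2})$, which is negligible.

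Your route --- a pointwise bound $|G_{\mu_n}(z)-G_{\mu_w}(z)|\le c\,y^{-\alpha}(\eta_{q1}(n)L_{qn}+n^{-1})$ on $\{\Im z\ge y_n\}$ followed by an Esseen-type smoothing step and balancing of $y_n$ --- would in general lose a power: the balance gives $(\eta_{q1}(n)L_{qn}+n^{-1})^{\gamma}$ for some $\gamma<1$, not the quantity itself. The square-root edge of the semicircle density makes this worse, since the smoothing cost near $x=\pm2$ scales like $y_n^{1/2}$ rather than $y_n$. To recover the stated bound you would need a version of your Cauchy-transform estimate that is \emph{uniform} in $y$ down to the real axis (i.e.\ $\alpha=0$), away from a shrinking edge neighbourhood; but that is exactly what the paper achieves by first passing to the bounded measure $\mu^*$, using the cubic factorization, and integrating in $x$ over $B_4$ rather than bounding pointwise. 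The contraction/iteration scheme you propose for $Z_n$ does not obviously yield such uniformity without this preliminary truncation and the algebraic control coming from the cubic.
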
 

In the~case $m_2<\infty$, Theorem~\ref{2.1*th} yields a~type of Free CLT with the~error bound
\begin{equation}\notag
%\Delta(\mu_n,\mu_{w})
\sup_{x\in\mathbb R}|F_n(x)-w(x)|\le c \Big(\eta_{q1}(n)+n^{-1}\Big),\quad n\in\mathbb N.
\end{equation}
Since $\eta_{q1}(n)\le 10^{3/2}+1,\,n\in\mathbb N$, in the~case $\beta_q<\infty,\,2\le q\le 3$, we obtain from (\ref{2.4a*}) 
the~complete analogue of the Berry-Esseen inequality as well. 
\begin{corollary}\label{2.1*co}
Assume that $X_j,\,j=1,\dots$, are free, $\beta_q<\infty$ with $2<q\le 3$ and 
$m_1=0,\,m_2=1$. Then, for $n\in\mathbb N$,
\begin{equation}\label{2.4a**}
\sup_{x\in\mathbb R}|F_n(x)-w(x)|\le c\, L_{qn}.
\end{equation}
\end{corollary}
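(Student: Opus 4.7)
The plan is to deduce this corollary directly from Theorem~\ref{2.1*th} via two elementary reductions, with no new analytic input needed. First I would invoke the remark following the definition in (\ref{2.4**}), namely that $\eta_{q1}(n)\le 10^{3/2}+1$ holds uniformly in $n$ whenever $2\le q_1\le 3$. Since $q_1=\min\{q,3\}$, this uniform bound is in force throughout the range $2<q\le 3$ considered in the corollary, so the factor $\eta_{q1}(n)L_{qn}$ in the bound of Theorem~\ref{2.1*th} is immediately absorbed into $c\,L_{qn}$.

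Second, I would use the normalization $m_2=1$ together with Lyapunov's inequality $\beta_2^{1/2}\le\beta_q^{1/q}$ for $q\ge 2$ to conclude that $\beta_q\ge\beta_2^{q/2}=1$. Consequently
\[
L_{qn}=\frac{\beta_q}{n^{(q-2)/2}}\ge\frac{1}{n^{(q-2)/2}}\ge\frac{1}{n},
\]
where the last inequality uses $(q-2)/2\le 1/2\le 1$ for $q\le 3$ together with $n\ge 1$. This shows that the residual $n^{-1}$ term in Theorem~\ref{2.1*th} is itself dominated by $L_{qn}$.

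Combining these two facts, for $2<q<3$ the first branch of Theorem~\ref{2.1*th} gives
\[
\sup_{x\in\mathbb R}|F_n(x)-w(x)|\le c\bigl(\eta_{q1}(n)L_{qn}+n^{-1}\bigr)\le c'L_{qn},
\]
while for $q=3$ the second branch of Theorem~\ref{2.1*th} delivers the assertion $\sup_{x\in\mathbb R}|F_n(x)-w(x)|\le c L_{3n}=c L_{qn}$ outright. There is no genuine obstacle: all the hard work is already packaged in Theorem~\ref{2.1*th}. The only subtlety worth checking is that $\eta_{q1}(n)$ is indeed uniformly bounded on the full range $2<q\le 3$ rather than only on, say, $2<q<3$; this is covered by the case $q_1=3$ in the boundedness statement following (\ref{2.4**}), matching the case $q=3$ of the corollary.
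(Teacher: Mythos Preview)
Your proposal is correct and follows essentially the same approach as the paper: the paper invokes exactly the uniform bound $\eta_{q1}(n)\le 10^{3/2}+1$ for $2\le q\le 3$ (stated in the text immediately preceding the corollary) and then the Lyapunov inequality $1=m_2^{1/2}\le\beta_q^{1/q}$ to absorb the $n^{-1}$ term into $L_{qn}$. Your explicit separation of the case $q=3$ via the second branch of Theorem~\ref{2.1*th} is a minor cosmetic addition but changes nothing substantive.
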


In the~case $\beta_3<\infty$ the~inequality~(\ref{2.4a**}) has the~form
\begin{equation}\label{2.4b**}
\sup_{x\in\mathbb R}|F_n(x)-w(x)|\le c \,L_{3n},\qquad n\in\mathbb N.
\end{equation}

The~upper bound (\ref{2.4b**}) sharpens previous results obtained by the~authors~\cite{ChG:2005a}
and V. Kargin~\cite{Ka:2007}.

Theorem~\ref{2.1*th} and Corollary~\ref{2.1*co} are free analogues of Esseen's
inequality in classical probability theory (see~\cite{Pe:1975}, p. 112-120).

\begin{theorem}\label{2.1th}
Assume that $X_j,\,j=1,\dots$, are free, $\beta_q<\infty$ with some $q\ge 3$ and 
$m_1=0,\,m_2=1$. Then, for $n\in\mathbb N$,
\begin{equation}\label{2.4}
\sup_{x\in\mathbb R}|F_n(x)-\mu_{a_n,0,0}((-\infty,x))|\le c 
\begin{cases}
\eta_{q2}(n) L_{qn}
+ L_{3n}^2&\text{if}\quad \beta_q<\infty,\,3\le q<4\\
L_{4n}&\text{if}\quad \beta_q<\infty,\,q\ge 4.
\end{cases}
\end{equation} 
\end{theorem}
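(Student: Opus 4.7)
The plan is to prove Theorem~\ref{2.1th} by comparing the Cauchy transforms $G_{\mu_n}$ and $G_{\mu_{a_n,0,0}}$ on a strip $\{z=u+iv:|u|\le T,\,v=v_n\}$ in $\mathbb C^+$ and then converting the resulting pointwise estimate to a uniform bound on the corresponding distribution functions via a smoothing inequality of the type established in \cite{ChG:2005a}. The starting point is the subordination identity for free convolution: after the $1/\sqrt n$-scaling, $G_{\mu_n}(z)=G_\mu(Z_n(z))$ for a subordination function $Z_n\colon\mathbb C^+\to\mathbb C^+$ characterized by a functional equation involving $K_\mu:=1/G_\mu$. An entirely analogous identity holds for $G_{\mu_{a_n,0,0}}$ using the explicit reciprocal Cauchy transform (\ref{2.3h}).

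The second step is to Taylor-expand $K_\mu$ around the semicircular fixed point $G_{\mu_w}(z)$. Since $K_\mu(w)=1/w+w+\alpha_3 w^2+\alpha_4 w^3+\dots$ in terms of the free cumulants, substituting into the subordination equation and solving to the appropriate order produces the formal Edgeworth-type expansion
\[
G_{\mu_n}(z)=G_{\mu_w}(z)+\frac{B_1(G_{\mu_w}(z))}{\sqrt n}+\frac{B_2(G_{\mu_w}(z))}{n}+R_n(z)
\]
with $B_1,B_2$ as in (\ref{2.3bb})--(\ref{2.3bbb}). A direct computation with (\ref{2.3h}) shows that $G_{\mu_{a_n,0,0}}$ admits the same expansion through order $1/\sqrt n$, its $1/n$-term differing from $B_2(G_{\mu_w}(z))/n$ only by the $\alpha_4$-contribution. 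Hence $G_{\mu_n}(z)-G_{\mu_{a_n,0,0}}(z)$ is controlled by (i) the $\alpha_4/n$ term with an integrable weight in $\Re z$, plus (ii) the higher-order remainder $R_n(z)$.

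The third step is the quantitative bound on $R_n(z)$ in the two regimes of the theorem. When $q\ge 4$, the Taylor expansion of $K_\mu$ to order four and the bound $|K_\mu(w)-(1/w+w+\alpha_3 w^2+\alpha_4 w^3)|\le c\beta_4|w|^4$ on the relevant range give $|R_n(z)|\le c\,L_{4n}\tilde w(z)$ directly. For $3\le q<4$, one only has $\beta_q$ with $q<4$; the standard remedy is to split the truncated fourth-moment integral at a threshold $\varepsilon\sqrt n$, yielding one piece bounded by $\varepsilon^{4-q}\beta_q$ and another by $\rho_{q_2}(\mu,\varepsilon\sqrt n)$, and taking the infimum over $\varepsilon\in(0,10^{-1/2}]$ produces exactly the factor $\eta_{q2}(n)L_{qn}$ from (\ref{2.4**}). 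The additional $L_{3n}^2$ term comes from the nonlinear iteration of the subordination fixed-point equation: substituting the first-order approximation of $G_{\mu_n}$ back into $K_\mu$ produces a second-order contribution of squared Lyapunov type, which is captured by $L_{3n}^2$ but is not captured by $\mu_{a_n,0,0}$ (since the free Meixner measure only matches $\alpha_3$ and not $\alpha_3^2$). Finally the smoothing inequality converts the resulting $L^1$-type estimate on $|G_{\mu_n}-G_{\mu_{a_n,0,0}}|$ along the line $\Im z=v_n$ (with $v_n$ and cutoff $T$ chosen of the right polynomial order in $n$) into the claimed supremum bound.

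The hard part will be the uniform control of the subordination function $Z_n(z)$ close to the real axis. One needs an a priori estimate keeping $Z_n(z)$ inside a disk around $G_{\mu_w}(z)$ on which the Taylor expansion of $K_\mu$ is justified; this requires the Berry--Esseen type bound of Theorem~\ref{2.1*th} as input. The expansion of $Z_n$ must then be iterated with remainders tracked at the correct order ($1/n$ when $q\ge4$, $\eta_{q2}(n)L_{qn}$ plus $L_{3n}^2$ when $3\le q<4$), uniformly for $z$ on the contour used by the smoothing inequality. Ensuring that the optimal truncation rate $\eta_{q2}(n)$ is recovered, rather than the weaker $L_{3n}$, forces a $z$- and $n$-dependent choice of the truncation parameter $\varepsilon$, and this delicate matching is the technical core of the proof.
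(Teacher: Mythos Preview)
Your overall strategy---compare Cauchy transforms and invert---matches the paper's, but the technical execution you propose is quite different from what the paper actually does. The paper does not work with the formal Edgeworth series (\ref{2.3b}) plus a Taylor remainder for $K_\mu$; instead it (i) truncates $\mu$ at level $\varepsilon_n\sqrt n$ \emph{at the outset}, passing to a compactly supported normalized measure $\mu^*$ (this is where $\eta_{q2}(n)$ enters, once and for all---no $z$-dependent cutoff is used), and then (ii) derives from the subordination relation (\ref{3.10}) an exact cubic equation
\[
S_n^3(z)-zS_n^2(z)+(1+\varepsilon_{n1}(z))S_n(z)+\varepsilon_{n2}(z)=0
\]
for $S_n(z)=Z(\sqrt n z)/\sqrt n$. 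A Rouch\'e argument shows this cubic has one small root $w_1(z)=-a_n+O(r)$ with $r=\eta_{q2}(n)L_{q_2n}+L_{3n}^2$, while $S_n(z)$ is identified with one of the two remaining roots, which solve a quadratic differing from the defining quadratic of $S_{n1}(z)=1/G_{\mu_{a_n,0,0}}(z)$ by $O(r)$. The comparison $|1/S_n-1/S_{n1}|$ is then purely algebraic, and Stieltjes--Perron inversion on a fixed interval $B_1\subset(a_n-2,a_n+2)$ (not a smoothing inequality along a horizontal line) converts the $L^1$ bound directly to the Kolmogorov bound.

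Two of your proposed ingredients are unnecessary in the paper's route and would complicate yours. First, the a~priori control on the subordination function near the real axis comes from the elementary Lemma~\ref{l7.5}, which gives $|Z(\sqrt n z)|\ge\sqrt{(n-1)/8}$ directly from the second moment; Theorem~\ref{2.1*th} is not used as input (it is proved by the same cubic-equation method in the Appendix). Second, because the truncation level is fixed as the minimizer of $g_{qn2}(\varepsilon)$ before any analysis begins, there is no need for a $z$-dependent $\varepsilon$. Your idea of iterating the fixed-point equation could in principle reproduce the root analysis, but the paper's algebraic reduction sidesteps the iteration entirely: the $L_{3n}^2$ term, for instance, simply falls out of the Rouch\'e estimate $|w_1+a_n|\le 10^2 r$, not from squaring a first-order approximation. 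What the cubic-equation approach buys is that all remainder control reduces to coefficient estimates on a polynomial, whereas your route would have to justify the formal series term-by-term uniformly down to the real axis, which is exactly the difficulty the paper's method is designed to avoid.
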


\begin{corollary}\label{2.1co}
Under the~assumptions of Theorem~$\ref{2.1th}$
the~following expansion holds
\begin{equation}\label{2.5*}
F_n(x)=w(x)-\frac 13 a_nU_2(x/2)p_w(x)+\rho_{n1}(x),\quad x\in\mathbb R,
\end{equation}
where the~remainder term $\rho_{n1}(x)$ admits the~bound, for $x\in\mathbb R,\,n\in\mathbb N$,
\begin{equation}\label{2.5**}
|\rho_{n1}(x)|\le c 
\begin{cases}
\eta_{q2}(n) L_{qn}
%\frac{\beta_{q_1}}{n^{(1+q_2)/2}}
+L_{3n}^2+|a_n|^{3/2}&\text{if}\quad \beta_q<\infty,\,3\le q<4\\
L_{4n}+|a_n|^{3/2}&\text{if}\quad \beta_q<\infty,\,q\ge 4.
\end{cases}
\end{equation} 
\end{corollary}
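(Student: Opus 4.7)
The plan is to combine Theorem~\ref{2.1th} with an explicit Taylor expansion of the distribution function of the free Meixner measure $\mu_{a_n,0,0}$ about the semicircle. Write $W_a(x) := \mu_{a,0,0}((-\infty,x))$ and $h(x) := \frac{1}{3} U_2(x/2)\, p_w(x)$, so that $h$ is supported on $[-2,2]$ and vanishes at the endpoints. The triangle inequality gives
\begin{equation*}
|\rho_{n1}(x)| \le |F_n(x) - W_{a_n}(x)| + |W_{a_n}(x) - w(x) + a_n h(x)|.
\end{equation*}
The first summand is controlled by Theorem~\ref{2.1th}, which contributes the first two terms on the right of (\ref{2.5**}). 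Thus the corollary reduces to the uniform estimate $\sup_{x\in\R} |W_{a_n}(x) - w(x) + a_n h(x)| \le c\, |a_n|^{3/2}$.

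From (\ref{2.3h}) (with $b=d=0$) one reads off the density $p_{a_n}(y) = \sqrt{4-(y-a_n)^2}/\bigl(2\pi(1+a_n y)\bigr)$ on $(a_n-2,\,a_n+2)$. A direct differentiation at $a_n=0$ yields $\partial_a p_a(y)|_{a=0} = (y^3-3y)/(2\pi\sqrt{4-y^2}) = -h'(y)$, as forced by (\ref{2.3c}) together with Stieltjes inversion. Since $h(\pm 2)=0$ and $h$ vanishes outside $[-2,2]$, we have $a_n h(x) = \int_{-\infty}^x a_n h'(y)\,dy$ for all $x$, and hence by the trivial estimate $\sup_x |\int_{-\infty}^x g\,dy| \le \|g\|_{L^1}$,
\begin{equation*}
\sup_{x}|W_{a_n}(x) - w(x) + a_n h(x)| \le \int_{\R} \bigl|p_{a_n}(y) - p_w(y) + a_n h'(y)\bigr|\,dy.
\end{equation*}
I would split $\R$ at the boundary-layer width $\delta := c\,|a_n|$ into the bulk $B := [-2+\delta,\,2-\delta]$ and its complement. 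On $B$, a second-order Taylor expansion of $\sqrt{4-(y-a_n)^2}$ and $(1+a_n y)^{-1}$ produces the pointwise bound
\begin{equation*}
|p_{a_n}(y) - p_w(y) + a_n h'(y)| \le c\, a_n^2\, (4-y^2)^{-3/2},
\end{equation*}
and since $\int_B (4-y^2)^{-3/2}\,dy \le c/\sqrt{\delta}$, the bulk contribution is $O(a_n^2/\sqrt{\delta}) = O(|a_n|^{3/2})$. On $B^c$ intersected with the two supports, both $p_{a_n}$ and $p_w$ are $O(\sqrt{\delta})$ on intervals of length $O(\delta)$ and thus contribute $O(\delta^{3/2})$, while $\int_{B^c\cap[-2,2]} |a_n h'(y)|\,dy \le c\,|a_n|\sqrt{\delta}$ because $|h'(y)| = O((4-y^2)^{-1/2})$. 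With $\delta = c\,|a_n|$ all three pieces are $O(|a_n|^{3/2})$, and outside both supports the whole integrand vanishes.

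The principal obstacle is the pointwise second-order remainder bound on $B$: one must expand $p_{a_n}$ with explicit tracking of the $(4-y^2)$-dependence. The technical source of the exponent $3/2$ (rather than $2$) is that $h' = -\partial_a p_a|_{a=0}$ carries inverse-square-root singularities at $y = \pm 2$, even though its antiderivative $h$ is bounded and vanishes at the edges; this prevents a uniform $O(a_n^2)$ estimate up to the spectral edge. Balancing the bulk error $a_n^2/\sqrt{\delta}$ against the edge error $\delta^{3/2}$ at $\delta \sim |a_n|$ is exactly what produces the characteristic rate $|a_n|^{3/2}$. The identification $\partial_a p_a|_{a=0} = -h'$ via (\ref{2.3c}) is only a bookkeeping step.
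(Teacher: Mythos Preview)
Your argument is correct and matches the paper's approach exactly: combine Theorem~\ref{2.1th} with the estimate
\[
\mu_{a_n,0,0}((-\infty,x))-w(x)=-\tfrac13 a_n U_2(x/2)p_w(x)+O(|a_n|^{3/2}),
\]
which the paper merely records as a ``simple formula'' without justification. Your bulk--edge $L^1$ density estimate with cutoff $\delta\sim|a_n|$ is a clean way to supply that missing detail; the pointwise second-order bound $c\,a_n^2(4-y^2)^{-3/2}$ on the bulk is correct since for $|a|\le|a_n|$ and $y\in B$ one has $4-(y-a)^2\asymp 4-y^2$, and the edge contributions are handled as you describe. (For $|a_n|\ge 1$ the bound is trivial since all three functions are uniformly bounded, so the absolute-continuity assumption on $\mu_{a_n,0,0}$ is harmless.)
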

Note that in the~case $\beta_3<\infty$ the~estimate (\ref{2.5**}) yields the~bound
\begin{equation}\label{2.5***}
|\rho_{n1}(x)|\le c \Big(\eta_{q2}(n)+L_{3n}+|a_n|^{1/2}\Big)L_{3n}, 
\end{equation}
where $\eta_{q2}(n)\to 0$ as $n\to\infty$, and we obtain an~analogue of Edgeworth expansion.

Since $\eta_{q2}(n)\le 101,\,3\le q\le 4,n\in\mathbb N$, the~results (\ref{2.5*}) and (\ref{2.5**})
again yield the~free Berry-Esseen inequality (\ref{2.4b**}) as well.

In addition we obtain from Theorem~\ref{2.1th} the~following bounds.
\begin{corollary}\label{2.1aco}
Under the~assumptions of Theorem~$\ref{2.1th}$ 
\begin{equation}\label{2.5***a}
\sup_{x\in\mathbb R}|F_n(x)-\mu_{a_n,0,0}((-\infty,x))|\le c \,L_{qn}\quad \text{for}\quad n\in\mathbb N
\quad\text{if}\quad \beta_q<\infty,\,3\le q\le 4.
\end{equation}
\end{corollary}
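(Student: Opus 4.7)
The strategy is to derive this corollary directly from Theorem~\ref{2.1th} by simplifying the bound it provides under the stronger hypothesis $3 \le q \le 4$. When $q = 4$, Theorem~\ref{2.1th} yields $c L_{4n} = c L_{qn}$ and the conclusion is immediate.

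For $3 \le q < 4$, Theorem~\ref{2.1th} gives
\[
\sup_{x\in\mathbb R}|F_n(x) - \mu_{a_n,0,0}((-\infty,x))| \le c\bigl(\eta_{q2}(n) L_{qn} + L_{3n}^2\bigr).
\]
The general bound $\eta_{q2}(n) \le 101$ stated in the paragraph following \eqref{2.4**} absorbs $\eta_{q2}(n)$ into the constant, so the first summand is already $O(L_{qn})$. The problem therefore reduces to establishing $L_{3n}^2 \le c\,L_{qn}$.

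To prove this, first dispose of the trivial regime: if $L_{qn} \ge 1$, the supremum on the left of the claim is automatically at most $1 \le L_{qn}$, since both $F_n$ and $\mu_{a_n,0,0}((-\infty,\cdot))$ are distribution functions. Assume henceforth $L_{qn} < 1$. With $\lambda := 1/(q-2) \in (0,1]$, the decomposition $3 = 2(1-\lambda) + q\lambda$ together with H\"older's inequality with conjugate exponents $1/(1-\lambda)$ and $1/\lambda$ gives
\[
\beta_3 \le \beta_2^{1-\lambda}\beta_q^{\lambda} = \beta_q^{1/(q-2)},
\]
where the last equality uses $\beta_2 = m_2 = 1$. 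Squaring and dividing by $n$ one obtains
\[
L_{3n}^2 \le \frac{\beta_q^{2/(q-2)}}{n} = L_{qn}^{2/(q-2)} = L_{qn}\cdot L_{qn}^{(4-q)/(q-2)} \le L_{qn},
\]
the last inequality following from $L_{qn} < 1$ and $(4-q)/(q-2) > 0$.

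I do not anticipate any genuine obstacle here: the corollary is a streamlined reformulation of Theorem~\ref{2.1th}, and the argument uses only the trivial estimate on the uniform norm of a difference of distribution functions, the a priori bound on $\eta_{q2}(n)$ already recorded in the paper, and the classical Lyapunov moment inequality sharpened by the normalization $\beta_2 = 1$.
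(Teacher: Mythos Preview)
Your argument is correct and follows essentially the same route as the paper's proof, which simply cites Theorem~\ref{2.1th} (i.e.\ \eqref{2.4}) and Proposition~\ref{3.3c}. The only cosmetic differences are that you derive the Lyapunov inequality $\beta_3 \le \beta_q^{1/(q-2)}$ directly via H\"older instead of invoking Proposition~\ref{3.3c}, and you explicitly treat the trivial regime $L_{qn}\ge 1$, which the paper's one-line proof leaves implicit.
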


Before formulating the~next result,
denote by $\varsigma_n,\,n>m_4$, a~signed measure with the density
\begin{equation}\label{2.5a***}
p_{\varsigma_n}(x):=(e_n^2(x-a_n)^2-1)p_w(e_n(x-a_n)),\quad x\in\mathbb R,
\end{equation} 
where $e_n:=(1-b_n)/\sqrt{1-d_n}$.
Denote by $\kappa_n,\,n>m_4$, the~{\it signed} measure~ 
$\kappa_n:=\mu_{a_n,b_n,d_n}+\frac 1n\varsigma_n$. 
%where $\delta_{a_n}$ is a Dirac measure concentrated at the point $a_n$.
It is easy to see from results of Section~4 that $\kappa_n$ is a~{\it probability} measure for $n\ge m_4/c$
with some sufficiently small $c$.

\begin{theorem}\label{2.2th}
Assume that $X_j,\,j=1,\dots$, are free random variables, that $\beta_q<\infty$ with some $q\ge 4$
and that $m_1=0,\,m_2=1$. Then, for $n>m_4$, 
\begin{equation}\label{2.6*}
\sup_{x\in\mathbb R}|F_n(x)-\kappa_n((-\infty,x))|\le c 
\begin{cases}
\eta_{q3}(n)L_{qn}+L_{4n}^{3/2}&\text{if}\quad \beta_q<\infty,\,4\le q<5\\
L_{5n}&\text{if}\quad \beta_q<\infty,\,q\ge 5.
\end{cases} 
\end{equation}
\end{theorem}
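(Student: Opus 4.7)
The plan is to extend the subordination-function method that produces Theorem~\ref{2.1th} by one further order of accuracy, replacing the $\mu_{a_n,0,0}$-approximation by the finer $\kappa_n$-approximation. The starting point is the identity $G_\mu(Z_n(z))=G_{\mu^{\boxplus n}}(z)$ for the analytic subordination function $Z_n\colon\mathbb{C}^+\to\mathbb{C}^+$, together with its rescaling to $Y_n$. The fixed-point equation for $Z_n$, derivable from $R_{\mu^{\boxplus n}}=nR_\mu$, is the engine of the argument and has already been exploited in Sections~5--7 at lower orders.

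I would then expand both $G_{\mu_n}(z)$ and $G_{\kappa_n}(z)$ to order $n^{-3/2}$ in a suitable region of $\mathbb{C}^+$ and compare. On the $\mu_n$ side, one iterates the fixed-point equation for $Z_n$ using the expansion $G_\mu(w)=1/w+1/w^3+m_3/w^4+m_4/w^5+r(w)$, with remainder $r(w)$ controlled by truncating $\mu$ at level $\varepsilon\sqrt{n}$, Taylor-expanding the truncated part, bounding the tail by $\rho_{q_3}(\mu,\varepsilon\sqrt{n})$, and optimizing over $\varepsilon$ to produce the factor $\eta_{q3}(n)$ from (\ref{2.4**}). This should give
\begin{equation*}
G_{\mu_n}(z)=G_{\mu_w}(z)+\tfrac{1}{\sqrt n}B_1(G_{\mu_w}(z))+\tfrac{1}{n}B_2(G_{\mu_w}(z))+\tfrac{1}{n^{3/2}}B_3(G_{\mu_w}(z))+r_n(z),
\end{equation*}
with $|r_n(z)|$ bounded by the right-hand side of (\ref{2.6*}) times an inverse power of $\Im z$. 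On the $\kappa_n$ side, the explicit formula (\ref{2.3h}) yields an algebraic Taylor expansion of $G_{\mu_{a_n,b_n,d_n}}$ in $(a_n,b_n,d_n)$ whose first three terms reproduce $G_{\mu_w}+B_1/\sqrt n+B_2/n$; the key algebraic check is that $\varsigma_n$ has been defined (via the density (\ref{2.5a***})) exactly so that $G_{\varsigma_n}/n$ cancels the mismatch between the $n^{-3/2}$ coefficient of $G_{\mu_{a_n,b_n,d_n}}$ and $B_3(G_{\mu_w}(z))$, modulo error of the target order. An Esseen-type smoothing inequality for the Cauchy kernel---using that $\kappa_n$ has uniformly bounded variation, which follows from the explicit densities in Section~4---then converts the resulting Cauchy-transform estimate into the sup-norm bound (\ref{2.6*}) via $\int_{-T}^T|G_{\mu_n}(x+iy)-G_{\kappa_n}(x+iy)|\,dx$ plus easily estimated boundary terms, with $y$ and $T$ of appropriate order in $n$.

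The main obstacle is the expansion on the $\mu_n$ side in the threshold regime $q\in[4,5)$, where $m_5$ may fail to exist: iterating the fixed-point equation one order further than in Theorem~\ref{2.1th} requires applying the truncation-plus-optimization device to the $R$-transform (or equivalently to the Voiculescu transform) and not only to $G_\mu$, and verifying that the analyticity and contractivity of the iteration are preserved under the truncation uniformly in $n$. A secondary but delicate point is the algebraic verification surrounding $\varsigma_n$: expanding the square root in (\ref{2.3h}) produces a collection of rational expressions in $G_{\mu_w}$ of the shape (\ref{2.3bb}), and one must confirm that their sum combined with $G_{\varsigma_n}/n$ reproduces $B_3(G_{\mu_w}(z))$ exactly, while the branch of the square root and the domain of $z$ are tracked consistently throughout.
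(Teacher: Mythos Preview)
Your overall architecture (truncate, use subordination, compare Cauchy transforms, invert) matches the paper's, but the specific mechanism you propose---expanding $G_{\mu_n}$ through the formal series $B_1,B_2,B_3$ and matching it against an expansion of $G_{\kappa_n}$---has a real gap, and it is precisely the obstacle the paper circumvents by a different device.

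First, the role of $\varsigma_n$ is misidentified. The correction $\tfrac1n\varsigma_n$ lives at order $1/n$, not $n^{-3/2}$: in the paper's proof one has $G_{\mu_n^*}(z)=\tfrac1{S_n(z)}+\tfrac1{nS_n^3(z)}+O(L_{3n}/n)$ (see (\ref{5.23})), and $\varsigma_n$ is chosen so that $G_{\varsigma_n}\approx G_{\mu_{a_n,b_n,d_n}}^3$. Thus $\kappa_n=\mu_{a_n,b_n,d_n}+\tfrac1n\varsigma_n$ captures \emph{two} $1/n$-contributions (one from the Meixner parameters $b_n,d_n$, one from $G^3$), and the residual is $O(n^{-3/2})$. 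It has nothing to do with $B_3$.

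Second, the $B_k$-expansion cannot be used as you describe. For $m_3\ne 0$ the paper shows (see (\ref{3.8*a})--(\ref{3.8*d})) that already $B_2(G_{\mu_w}(z))$ contains the term $\alpha_3^2/(z^2-4)^{3/2}$ and is \emph{not} the Cauchy transform of any signed measure of locally bounded variation; the same edge singularities worsen for $B_3$. Consequently a remainder bound of the form ``right-hand side of (\ref{2.6*}) times an inverse power of $\Im z$'' is false near the spectral edges $\pm 2$: the true errors behave like negative powers of $\sqrt{z^2-4}$, which are not dominated by $(\Im z)^{-k}$ as you approach the boundary of the support from inside. An Esseen-type smoothing at height $y\sim n^{-\alpha}$ would then lose powers of $n$ and fail to reach (\ref{2.6*}).

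What the paper does instead is to avoid the $B_k$ altogether. After truncation it derives a \emph{quintic} equation $Q(z,w)=0$ in $w$ satisfied by $w=S_n(z)$ (see (\ref{5.5})), uses Rouch\'e's theorem to separate three small roots from two large ones, and identifies $S_n(z)$ with the large root $w_4$ given by an explicit quadratic formula (\ref{5.11}). That formula is compared \emph{directly} with $S_{n2}(z)=1/G_{\mu_{a_n,b_n,d_n}}(z)$: the difference is controlled by $|\zeta_{n6}|+|\zeta_{n7}|/|\sqrt{(1-b_n)^2(z-a_n)^2-4(1-d_n)}|$, and by staying on the strip $|\Re z-a_n|\le 2/e_n-h_2$ with $h_2\sim c_2^{-1/6}(\eta_{q3}(n)L_{q_3n}+L_{4n}^{3/2})$ that square root is bounded below. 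This yields a bound on $\int_{B_3}|G_{\mu_n^*}-G_{\kappa_n}|$ that is \emph{uniform as $\varepsilon\downarrow 0$}, so Stieltjes--Perron inversion applies without any smoothing loss. If you want to rescue your approach, you must replace the ``inverse power of $\Im z$'' control by control in terms of $|\sqrt{(z-a_n)^2-4}|^{-1}$ on a shrunken interval, and recognize that the natural comparison object is $S_{n2}$ and $G_{\mu_{a_n,b_n,d_n}}^3$, not the formal $B_k$.
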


\begin{corollary}\label{2.2co}
Assume that the~assumptions of Theorem~$\ref{2.2th}$ are satisfied.
Then
\begin{align}\label{2.7*}
F_n(x&+a_n)=w(x)\notag\\
&+\Big(-\frac{a_n^2}2 U_1(x/2)
+\frac{a_n}3(3-U_2(x/2))-\frac{b_n-a_n^2-1/n}{4}U_3(x/2)
\Big)p_w(x)+\rho_{n2}(x),
\end{align}
for all real $x$, where
\begin{equation}\label{2.7a*}
|\rho_{n2}(x)|\le c
\begin{cases}
\eta_{q3}(n)L_{qn}+L_{4n}^{3/2}&\text{if}\quad \beta_q<\infty,\,4\le q<5\\
L_{5n}&\text{if}\quad \beta_q<\infty,\,q\ge 5
\end{cases} 
\quad \text{for}\quad x\in\mathbb R,\quad n\in\mathbb N.
\end{equation}  
If $m_3=0$ this formula has the~following simple form
\begin{equation}\label{2.8}
F_n(x)=w(x)-\frac{m_4-2}{4n}U_3(x/2)p_w(x)+\rho_{n3}(x),
\end{equation}
where $\rho_{n3}(x)$ admits the~bound $(\ref{2.7a*})$.
\end{corollary}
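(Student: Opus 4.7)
My plan is to apply Theorem~\ref{2.2th} to reduce the problem to analyzing the explicit CDF $\kappa_n\bigl((-\infty,x+a_n)\bigr)$: since the error bound in (\ref{2.6*}) already matches (\ref{2.7a*}), it suffices to show that this CDF equals $w(x)$ plus the bracket in (\ref{2.7*}) times $p_w(x)$, up to uniform error $O(n^{-3/2})$. This reduces the corollary to a Taylor expansion of $\kappa_n((-\infty,x+a_n))$ in the small parameters $a_n=O(n^{-1/2})$ and $b_n,d_n=O(n^{-1})$.

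To execute this expansion, I would first apply Stieltjes inversion to the reciprocal Cauchy transform (\ref{2.3h}), obtaining for the shifted measure the closed-form density
\[
p_{\mu_{a_n,b_n,d_n}}(x+a_n)=\frac{\sqrt{4(1-d_n)-(1-b_n)^2x^2}}{2\pi\bigl(b_nx^2+a_n(1+b_n)x+1+a_n^2-d_n\bigr)}
\]
on the corresponding support. Taylor-expanding numerator and denominator in $(a_n,b_n,d_n)$ up to precision $O(n^{-3/2})$ yields two kinds of correction terms: a polynomial-in-$x$ multiple of $p_w(x)$ (coming from the $a_n$-parts of the denominator) and a polynomial-in-$x$ multiple of $1/\sqrt{4-x^2}$ (coming from the $b_n,d_n$-parts of the numerator under the square root). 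Integrating these from the lower endpoint of the support up to $x$ uses the elementary identities $\int_{-2}^x y\,p_w(y)\,dy=-(4-x^2)^{3/2}/(6\pi)$ and $\int_{-2}^x(y^2-1)p_w(y)\,dy=-x(4-x^2)^{3/2}/(8\pi)$, together with the standard $\int_{-2}^x y^{2k}/\sqrt{4-y^2}\,dy$ formulas involving $\arcsin(x/2)$.

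The key observation --- and the main bookkeeping step --- is that the $\arcsin(x/2)$ and constant contributions in the $b_n,d_n$ part of the integral cancel identically, leaving only a $x\sqrt{4-x^2}$-type remainder. Adding the $\tfrac{1}{n}\varsigma_n$ contribution (which, via the change of variables $u=y-a_n$ and $v=e_nu$ and using $e_n=1+O(1/n)$, reduces to $\tfrac{1}{n}\int_{-2}^x(v^2-1)p_w(v)\,dv+O(n^{-2})$) and then using $d_n=b_n+1/n$ consolidates all $O(1/n)$ contributions into the single coefficient $-(b_n-a_n^2-1/n)/4$ of $U_3(x/2)p_w(x)$; the $a_n$ piece correspondingly rearranges into $(a_n/3)(3-U_2(x/2))p_w(x)$ and the $a_n^2$ piece into $-(a_n^2/2)U_1(x/2)p_w(x)$, producing (\ref{2.7*}).

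All dropped higher-order Taylor terms are products of the small parameters of total size $O(n^{-3/2})$; using Lyapunov-type inequalities such as $|m_3|^3\le c\beta_5$, they are bounded by $cL_{5n}$ (or by $c\bigl(\eta_{q3}(n)L_{qn}+L_{4n}^{3/2}\bigr)$ in the case $4\le q<5$), so together with Theorem~\ref{2.2th} this yields (\ref{2.7a*}). The special form (\ref{2.8}) in the case $m_3=0$ is immediate from $a_n=0$, $b_n=(m_4-1)/n$, and $b_n-1/n=(m_4-2)/n$. The main obstacle I anticipate is precisely the cancellation of $\arcsin(x/2)$ and constant terms in the integration step above: this cancellation is structurally guaranteed by the formal Cauchy-transform identities (\ref{2.3c})--(\ref{2.3d}) coupled with the parameter choice (\ref{2.3****}), which matches free cumulants of $\mu_{a_n,b_n,d_n}$ to those of $\mu_n$ through fourth order, but it requires careful explicit verification at precision $O(n^{-1})$.
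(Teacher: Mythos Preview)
Your approach is essentially the same as the paper's: reduce to Theorem~\ref{2.2th} and expand the explicit CDF $\kappa_n((-\infty,x+a_n))$ to precision $O(n^{-3/2})$, using the elementary primitives $\int x\sqrt{4-x^2}\,dx=-\tfrac{1}{3}(4-x^2)^{3/2}$ and $\int(x^2-1)\sqrt{4-x^2}\,dx=-\tfrac{1}{4}x(4-x^2)^{3/2}$, then specialize to $m_3=0$ for (\ref{2.8}).

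There is one technical difference worth noting. You propose to Taylor-expand the numerator $\sqrt{4(1-d_n)-(1-b_n)^2x^2}$ pointwise around $\sqrt{4-x^2}$, generating the $1/\sqrt{4-x^2}$ terms whose integrals produce $\arcsin(x/2)$, and then verify that these cancel. The paper avoids this step entirely: it expands only the \emph{denominator} of the Meixner density, writes the shifted density as a polynomial in $x$ times the exact perturbed square root (plus $O(n^{-3/2})$), and then computes $\tfrac{1}{2\pi}\int_{-\infty}^x\sqrt{(4(1-d_n)-(1-b_n)^2u^2)_+}\,du$ by the substitution $v=e_nu$, obtaining
\[
(1-d_n+b_n)\,w(x)+\Big(\tfrac{1}{2}d_n-b_n\Big)\,x\,p_w(x)+O(L_{4n}^{3/2})
\]
directly. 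This sidesteps your ``main obstacle'' (the $\arcsin$ cancellation) and also the boundary issue that your pointwise Taylor expansion of the square root is not uniform near $x=\pm 2$ (the second-order remainder is $O(n^{-2}(4-x^2)^{-3/2})$, which is not integrable). Your route can still be made rigorous by integrating first and expanding the resulting CDF afterwards, which in effect reproduces the paper's computation; but as written you should be aware that ``Taylor-expand the numerator, then integrate term by term'' requires an extra argument near the endpoints.
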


If $m_3\ne 0$, we obtain from (\ref{2.7*}) the following expansion for $F_n(x)$:
\begin{align}%\label{2.8a}
F_n(x)&=w(x)-\frac 13 a_nU_2(x/2)p_w(x)+\Big(\frac{a_n^2}6 U_1(x/2)
-\frac{b_n-a_n^2-1/n}{4}U_3(x/2)\Big)p_w(x)\notag\\
&+ Q_1(x,a_n)+Q_2(x,a_n,b_n,1/n)+\rho_{n4}(x),\quad x\in\mathbb R,\notag 
\end{align}
where 
\begin{align}%\label{2.8b}
&Q_1(x,a_n)=w(x-a_n)-w(x)+a_np_w(x)+\frac{a_n}3(3-U_2(x/2))(p_w(x-a_n)-p_w(x)), \notag\\
&Q_2(x,a_n,b_n,1/n) =\Big(\frac{a_n^2}6 U_1(x/2)-\frac{b_n-a_n^2-1/n}{4}U_3(x/2)\Big)(p_w(x-a_n)-p_w(x))\notag 
\end{align}
and the function $\rho_{n4}(x)$ admits the bound (\ref{2.7a*}). The function $Q_1(x,a_n)$ is 
a function of bounded variation and it is not difficult to verify that
\begin{equation}\label{2.8c}
\frac 1{c}|a_n|^{3/2}\le \sup_{x\in\mathbb R}|Q_1(x,a_n)|\le c|a_n|^{3/2}\quad\text{and}\quad
\frac 1{c}|a_n|^{3/2}\le ||Q_1(x,a_n)||_{TV}\le c|a_n|^{3/2},  
\end{equation}
with some $c\ge 1$. This means that $Q_1(x,a_n)$ is actually of order $n^{-3/4}$.
%Moreover $|Q_2(x,a_n,b_n,1/n)|\le cL_{4n}\sqrt{|a_n|},\,x\in\mathbb R$.
%Hence, in view of (\ref{2.8c}), we have not a ``good`` expansion of type (\ref{2.7*}) 
%for the nonshifted function $F_n(x)$.   
We shall see that $Q_1(x,a_n)$ can not be cast by Taylor expansion around $x$ into an expansion 
in powers of $n^{-1/2}$ like
(\ref{2.7*}) in terms of $p_w(x)$ and the Chebyshev polynomials which is continuous up to the boundary $\pm 2$
with finite total variation. Moreover $|Q_2(x,a_n,b_n,1/n)|\le cL_{4n}\sqrt{|a_n|},\,x\in\mathbb R$.

Indeed from the formal expansion of $\mu_n$ in (\ref{2.3b}) and (\ref{2.3c}) it follows 
that the~first two summands on the~right-hand
side of (\ref{2.3b}) are Cauchy transforms of the~finite signed measure on
the~right-hand side of (\ref{2.5*}). Moreover, in the~case $m_3=0$
the~first three summands on the~right-hand side of (\ref{2.3b}) are Cauchy 
transforms of the~finite signed measure on
the~right-hand side of (\ref{2.8}). But in the~case $m_3\ne 0$
the~third summand on the~right-hand side of (\ref{2.3b}) can not be a~Cauchy 
transform of a~signed measure $\zeta$ which is finite on every bounded interval and
$\int_{\mathbb R}|\zeta(du)|/(1+|u|)<\infty$. This will be proved in Section~4.  
Therefore, taking into account the formal expansion (\ref{2.3b}), we can not expect 
an expansion of type (\ref{2.7*}) for the function $F_n(x)$ without shift.

%The~expansion result in (\ref{2.7*}) for $F_n(x+a_n)$ gives us the~correct expansion in the case $m_3\ne 0$. 

\begin{remark}
The methods used in the~proof of Theorems~\ref{2.1th} and \ref{2.2th} still do not yield
a free analogue of Edgeworth asymptotic expansions under the~assumption $\beta_q<\infty, q>5$,
with a~remainder term
of order $O\big(n^{-3/2-\gamma}\big)$ with $\gamma>0$. This problem remains open.
\end{remark}

\begin{remark}
It is known, see for example~\cite{BeVo:1995}, \cite{ChG:2005}, that there is semigroup $\mu_t\in\mathcal M, \,t\ge 1$,
such that $\phi_{\mu_t}(z)=t\phi_{\mu_1}(z)$, where $\phi_{\mu_t}(z)$ are Voiculesku transforms
of the~probability measures $\mu_t$. For the~definition of Voiculesku's transform, see in Section~3.
As before let $m_1=0$ and $m_2=1$. Define a~probability measure $\hat{\mu}_t$ in the~following
way: $\hat{\mu}_t((-\infty,x))=\mu((-\infty,x\sqrt t)),\,x\in\mathbb R$. Theorems~\ref{2.1*th}, 
\ref{2.1th}, \ref{2.2th} and 
their Corollaries
remain valid for $\hat{\mu}_t$ if the~integers $n$ are replaced by $t\ge 1$. One can prove
these results exactly by the~same proof.
\end{remark}

%\begin{remark}
It was proved in~\cite{BelBer:2004} that if the distribution $\mu$ of $X_1$ is not a Dirac measure,
then $F_n(x)$ is Lebesgue absolutely continuous when $n\ge c_1(\mu)$ is sufficiently large. 
Denote by $p_n(x)$ the density of $F_n(x)$. If $\mu$
has a compact support, Voiculescu's result~\cite{Vo:1986} shows that the support of $F_n(x)$
is contained in the interval $[-2-\frac L{\sqrt n},2+\frac L{\sqrt n}]$ for $n\ge 1$, where
$L:=\sup\{|x|:x\in supp(\mu)\}$ (see~\cite{Ka:2007a} as well). 
%Denote by $c(\mu),c_1(\mu),c_2(\mu),\dots$
%positive constants depending on $\mu$ only.
Our method allows
to obtain an asymptotic expansion for $p_n(x)$ in this case.

\begin{theorem}\label{th7}
Assume that $\mu$ has compact support and $m_1=0,\,m_2=1$. Then, for $n\ge c_1(\mu)$, $p_n(x)$ is a continuous function
such that $p_n(x)\le 2,\,x\in\mathbb R$, and 
\begin{equation}\label{asden1}
p_n(x+a_n)=\Big(1+\frac 12 d_n-a_n^2-\frac 1{n}-a_nx-\Big(b_n-a_n^2-\frac 1{n}\Big)x^2\Big)
p_w(e_nx)+\frac {c(\mu)\theta}{n^{3/2}\sqrt{4-(e_nx)^2}}
\end{equation}
for $x\in[-\frac 2{e_n}+h,\frac 2{e_n}-h]$, where 
%$e_n:=(1-b_n)/\sqrt{1-d_n}$
%and 
$h=\frac {c_2(\mu)}{n^{3/2}}$. Moreover,
%with some positive constant $c(\mu)$ depending on $\mu$ only.
as a simple consequence of $(\ref{asden1})$, the following inequality holds
\begin{equation}\label{asden2}
\int_{\mathbb R\setminus[-\frac 2{e_n}+h,\frac 2{e_n}-h]}p_n(x+a_n)\,dx\le \frac {c(\mu)}{n^{3/2}}. 
\end{equation}
\end{theorem}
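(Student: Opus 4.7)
The plan is to derive the density expansion by combining the Stieltjes inversion formula with a pointwise version of the Cauchy-transform asymptotics that underlie the proof of Theorem~\ref{2.2th}. Since $\mu$ is not a Dirac measure, the result of Belinschi and Bercovici~\cite{BelBer:2004} gives that $\mu_n$ is Lebesgue absolutely continuous for $n\ge c_1(\mu)$, so
\begin{equation*}
p_n(x)=-\pi^{-1}\lim_{y\to 0^+}\Im G_{\mu_n}(x+iy)
\end{equation*}
wherever the boundary limit exists. Compact support of $\mu$ yields $\beta_q\le L^q$ for every $q$, and by Voiculescu~\cite{Vo:1986} $\operatorname{supp}(\mu_n)\subset[-2-L/\sqrt n,\,2+L/\sqrt n]$, so $G_{\mu_n}$ extends analytically across the real axis outside this interval and $p_n$ is bounded.

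The first main step is to upgrade the integral (Kolmogorov-distance) estimates underlying Theorem~\ref{2.2th} to a pointwise bound of the shape
\begin{equation*}
|G_{\mu_n}(z)-G_{\kappa_n}(z)|\le \frac{c(\mu)}{n^{3/2}\,\sqrt{|(z-a_n)^2-4/e_n^2|}},\qquad \Im z\ge 0,
\end{equation*}
valid for $z$ at distance at least $h=c_2(\mu)n^{-3/2}$ from the endpoints $a_n\pm 2/e_n$. This is obtained by returning to the subordination identity $G_{\mu_n}(z)=G_\mu(\omega_n(z))$ used in the definition of the free convolution and iterating it against the Meixner auxiliary function developed in Sections~5--6. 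Compactness of $\operatorname{supp}(\mu)$ provides uniform analytic control of $G_\mu$ in the relevant domain and prevents the subordination function $\omega_n(x+i0)$ from approaching $\operatorname{supp}(\mu)$ faster than $c(\mu)n^{-3/2}$ when $x$ lies in the interior of $\operatorname{supp}(\mu_n)$, which is exactly what forces the boundary excision of width $h$.

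The second step is an explicit computation of the density of $\kappa_n$. From (\ref{2.3h}) with $y=x+a_n$, the reciprocal Cauchy transform of $\mu_{a_n,b_n,d_n}$ at the real boundary has imaginary part $\pi\sqrt{1-d_n}\,p_w(e_n x)$ and squared modulus $Q_n(x):=a_n^2+(1-d_n)+a_n(1+b_n)x+b_n x^2$; hence the density of $\mu_{a_n,b_n,d_n}$ at $x+a_n$ equals $\sqrt{1-d_n}\,p_w(e_n x)/Q_n(x)$. Adding the density $n^{-1}(e_n^2x^2-1)p_w(e_n x)$ of $n^{-1}\varsigma_n$ and Taylor expanding in the small parameters $a_n=O(n^{-1/2})$, $b_n,d_n,n^{-1}=O(n^{-1})$, one recovers precisely the polynomial factor of $p_w(e_n x)$ in (\ref{asden1}) up to an error $O(n^{-3/2})\,p_w(e_n x)$. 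Taking the imaginary part of the pointwise bound above and using $-\pi^{-1}\Im G_{\mu_n}=p_n$ then yields the displayed remainder $c(\mu)\theta/(n^{3/2}\sqrt{4-(e_n x)^2})$; finally (\ref{asden2}) follows because the main term integrated over $[-2/e_n+h,2/e_n-h]$ accounts for mass $1+O(n^{-3/2})$ while $\mu_n$ has total mass one.

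The principal obstacle is the pointwise extension of the Cauchy-transform expansion down to the real axis, which the Section~7 argument does not supply as it stands. Without compact support, the subordination function $\omega_n(x+i0)$ could approach $\operatorname{supp}(\mu)$ faster than $n^{-3/2}$, making the Meixner-type iterative remainder blow up near the edges; compactness, through uniform control of all $\beta_q$ by $L^q$, is exactly what permits the subordination identity to be iterated to the required order uniformly on the interior of $\operatorname{supp}(\mu_n)$ up to a boundary strip of width $h=c_2(\mu)n^{-3/2}$.
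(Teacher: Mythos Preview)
Your overall plan---take the Cauchy-transform estimates from the proof of Theorem~\ref{2.2th} pointwise, invert to the density, and expand the Meixner density explicitly---is the right one and is exactly what the paper does. But your account of \emph{why} the pointwise extension goes through is off, and this is the heart of the argument.

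You write that compactness ``prevents the subordination function $\omega_n(x+i0)$ from approaching $\operatorname{supp}(\mu)$ faster than $c(\mu)n^{-3/2}$.'' This is not the mechanism. In the paper's notation $S_n(z)=Z(\sqrt n\,z)/\sqrt n$, and Lemma~\ref{l7.5} (which needs only $m_2=1$) gives $|Z(\sqrt n\,z)|\ge\sqrt{(n-1)/8}$ uniformly on $\overline{\mathbb C^+}$; the subordination image therefore stays at distance of order $\sqrt n$ from $\operatorname{supp}(\mu)\subset[-L,L]$, not $n^{-3/2}$. The $n^{-3/2}$ in the remainder comes from the Lyapunov fractions $L_{5n}+L_{4n}^{3/2}$, which for compactly supported $\mu$ are $O(n^{-3/2})$ because $\beta_k\le L^k$.

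The step you flag as the ``principal obstacle''---pushing the estimates to the real axis---is handled in the paper not by an iteration argument but directly: Lemma~\ref{l7.4} shows $S_n$ is continuous up to $\mathbb R$ (a boundary regularity statement for the conformal map solving~(\ref{3.10})), and since $|S_n(z)|\ge 1/3$ and $|u|/\sqrt n\le L/\sqrt n$ is small, the representation
\[
G_{\mu_n}(z)=\int_{[-L,L]}\frac{\mu(du)}{S_n(z)-u/\sqrt n}
\]
is continuous up to $\mathbb R$ with $|G_{\mu_n}|$ uniformly bounded. This immediately gives that $p_n$ is continuous with $p_n\le 2$; you invoke \cite{BelBer:2004} for absolute continuity but do not obtain continuity or the explicit bound $\le 2$ from it. Once boundary continuity is in hand, the formulas (\ref{5.18}) and (\ref{5.23}) from Section~7 can be read pointwise at $z=x+i0$, and taking imaginary parts gives the bounds (\ref{loc.2}) and (\ref{loc.3}) that replace the integrated estimates (\ref{5.20}), (\ref{5.25}). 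The explicit expansion of the Meixner density you sketch is correct and corresponds to (\ref{loc.4})--(\ref{loc.5}).

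In short: keep the plan, but replace the vague ``iterating the subordination identity'' and the incorrect $n^{-3/2}$-distance claim by the concrete inputs Lemma~\ref{l7.4} (boundary continuity of $S_n$) and Lemma~\ref{l7.5} (lower bound $|S_n|\ge 1/3$). Those two facts, together with compact support ensuring $\mu^*=\mu$ for large $n$, are what make the Section~7 identities hold pointwise on the real line.
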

Here and in the sequel we denote by $\theta$ a real-valued quantity such that $|\theta|\le 1$.

We see that the remainder term in (\ref{asden1}) has the order $\frac 1{n\sqrt{M_n}}$ on the interval 
$[-\frac 2{e_n}+\frac{M_n}n,\frac 2{e_n}-\frac{M_n}n]$
with any $M_n$ such that $M_n\to\infty$ and $\frac{M_n}n\to 0$ as $n\to\infty$. 

Now we shall state some consequences of Theorem~\ref{th7}.
\begin{corollary}\label{corth7.1}
Let $\mu_n$ be the distribution of $Y_n$ from $(\ref{2.1})$ with bounded identically distributed  
free summands $X_1,\dots,X_n$ such that $m_1=0$ and $m_2=1$.
%et $p_n,\,n\ge c_1(\mu)$, be the density of $Y_n$ from $(\ref{2.1})$ with bounded identically distributed
%free summands $X_1,\dots,X_n$ such that $m_1=0$ and $m_2=1$.
If $m_3\ne 0$, then
\begin{equation}\label{2.9}
\int_{\mathbb R}|p_n(x)-p_w(x)|\,dx=\frac {2|m_3|}{\pi\sqrt n}+\theta\big(c|a_n|^{3/2}+\frac {c(\mu)}n\big),
\quad n\ge c_1(\mu). 
\end{equation}
If $m_3=0$, then
\begin{equation}\label{2.10}
\int_{\mathbb R}|p_n(x)-p_w(x)|\,dx=\frac {2|m_4-2|}{\pi n}+\theta\frac{c(\mu)}{n^{3/2}},\quad n\ge c_1(\mu). 
\end{equation}
%where
%\begin{equation}\notag
%\tilde{c}:=\frac 1{2\pi}\int_{[0,2]}\frac{|2-4x^2+x^4|}{\sqrt{4-x^2}}\,dx. 
%\end{equation} 
\end{corollary}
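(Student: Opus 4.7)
My plan is to derive both identities from the density expansion~(\ref{asden1}) of Theorem~\ref{th7}. After the substitution $x=y+a_n$,
\[
\int_{\mathbb R}|p_n(x)-p_w(x)|\,dx=\int_{\mathbb R}|p_n(y+a_n)-p_w(y+a_n)|\,dy,
\]
and I would split along the bulk interval $I_n:=[-2/e_n+h,\,2/e_n-h]$ and its complement. Off the bulk the estimate $|p_n-p_w|\le p_n+p_w$ suffices: inequality~(\ref{asden2}) gives $c(\mu)/n^{3/2}$ from the $p_n$-piece, while the $p_w$-piece lies on two strips of length $O(|a_n|)+O(1/n)$ near $\pm 2$ on which $p_w(z)\asymp\sqrt{2-|z|}$, contributing $O(|a_n|^{3/2})+O(n^{-3/2})$.

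On $I_n$ I would apply~(\ref{asden1}), together with the explicit identity
\[
p_w(e_ny)-p_w(y+a_n)=\frac{1}{2\pi}\cdot\frac{(y+a_n)^2-e_n^2y^2}{\sqrt{(4-e_n^2y^2)_+}+\sqrt{(4-(y+a_n)^2)_+}}
\]
to avoid Taylor-expanding $p_w$, whose second derivative $\sim(4-y^2)^{-3/2}$ is not integrable at $\pm 2$. Combining this with (\ref{asden1}) and retaining leading terms via $a_n=m_3/\sqrt n$ and $e_n-1=(2-m_4+m_3^2)/(2n)+O(n^{-2})$ yields the signed identity $p_n(y+a_n)-p_w(y+a_n)=M_n(y)+R_n(y)$ with main term
\[
M_n(y)=\begin{cases}\displaystyle -a_n\bigl[p_w'(y)+y\,p_w(y)\bigr]=-\frac{a_n\,y(3-y^2)}{2\pi\sqrt{4-y^2}},&m_3\ne 0,\\[8pt]
\displaystyle\frac{(m_4-2)(y^4-4y^2+2)}{2\pi n\sqrt{4-y^2}},&m_3=0.\end{cases}
\]
The remainder $R_n$ absorbs the Theorem~\ref{th7} remainder (which integrates to $O(n^{-3/2})$ on $I_n$ since $\int_{I_n}dy/\sqrt{4-(e_ny)^2}\le\pi/e_n$) together with the coefficients $d_n/2-a_n^2-1/n$ and $b_n-a_n^2-1/n$ of order $1/n$ in (\ref{asden1}); these are already incorporated into $M_n$ when $m_3=0$, and contribute at most $O(1/n)$ when $m_3\ne 0$. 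The reverse triangle inequality $\bigl||A|-|B|\bigr|\le|A-B|$ then reduces everything to computing $\int|M_n|\,dy$ and controlling the off-bulk plus $R_n$ errors.

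Both main-term integrals are explicit. For $m_3\ne 0$, the substitution $u=4-y^2$ gives
\[
\int_{-2}^{2}\frac{|y(3-y^2)|}{2\pi\sqrt{4-y^2}}\,dy=\frac{2}{\pi},
\]
producing $2|a_n|/\pi=2|m_3|/(\pi\sqrt n)$. For $m_3=0$, the substitution $y=2\sin\theta$ reduces $y^4-4y^2+2$ to $2\cos(4\theta)$, so
\[
\int_{-2}^{2}\frac{|y^4-4y^2+2|}{2\pi\sqrt{4-y^2}}\,dy=\frac{1}{\pi}\int_{-\pi/2}^{\pi/2}|\cos(4\theta)|\,d\theta=\frac{2}{\pi},
\]
which multiplied by $|m_4-2|/n$ produces $2|m_4-2|/(\pi n)$. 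The principal technical obstacle is the calibration of the bulk/boundary split: one must arrange that the boundary pieces contribute no more than $O(|a_n|^{3/2})+O(n^{-3/2})$ while the interior error from $R_n$ comes out at $O(1/n)$ (resp.\ $O(n^{-3/2})$); the square-root identity above is the key tool that makes this possible without invoking the unbounded $p_w''$.
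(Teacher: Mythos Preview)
Your proposal is correct and follows essentially the same route as the paper's proof in Section~9: split into bulk and complement using (\ref{asden2}), apply the density expansion (\ref{asden1}) on the bulk, use the difference-of-square-roots identity to extract the leading term without Taylor-expanding $p_w$, and evaluate the resulting explicit integrals. Your presentation is slightly more detailed than the paper's (e.g., the explicit identity for $p_w(e_ny)-p_w(y+a_n)$ and the substitutions $u=4-y^2$ and $y=2\sin\theta$ in the two integral computations), but the argument is the same.
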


%From our arguments it follows as well the following bound
%\begin{equation}
%\int\limits_{\mathbb R}|p_n(x)-p_w(x)|\,dx\le c\frac L{\sqrt n},\quad n\ge n_0(\mu).
%\end{equation}

%We can write an analog of the asymptotic expansion (\ref{asden}) with a remainder term of order $1/n^{3/2}$.
%We formulate this result and a proof of the results of Remark~2.10 in a next paper. 
%\end{remark}
Recall that, if the random variable $X$ has density $f$, then the classical entropy of a
distribution of $X$ is defined as $h(X)=-\int_{\mathbb R}f(x)\log f(x)\,dx$,
provided the positive part of the integral is finite. Thus we have $h(X)\in[-\infty,\infty)$.

A much stronger statement than the classical CLT -- the entropic central limit theorem --
indicates that, if for some $n_0$, or equivalently, for all $n\ge n_0$, $Y_n$ from (\ref{2.1})
have absolutely continuous distributions with finite entropies $h(Y_n)$, then there is convergence
of the entropies, $h(Y_n)\to h(Y)$, as $n\to \infty$, where $Y$ is a standard Gaussian random variable. 
This theorem is due to Barron~\cite{Ba:1986}. Recently Bobkov, Chistyakov and G\"otze~\cite{BChG:2011}
found the rate of convergence in the classical entropic CLT.

%Recall that, if the random variable $X$ has density $f$, then the classical entropy of a
%distribution of $X$ is defined as $h(X)=-\int_{\mathbb R}f(x)\log f(x)\,dx$,
%provided the positive part of the integral is finite. Thus we have $h(X)\in[-\infty,\infty)$.

Let $\nu$ be a probability measure on $\mathbb R$. The quantity
\begin{equation}\notag
\chi(\nu)=\int\int_{\mathbb R\times \mathbb R}\log|x-y|\,\nu(dx)\nu(dy)+\frac 34+\frac 12\log 2\pi,
\end{equation}
called free entropy of $\nu$, was introduced by Voiculescu in~\cite{Vo:1993}. Free entropy $\chi$ behaves like 
the classical entropy $h$. 
%where $p(x)$ is a density of $\nu$,
%provided the positive part of the integral is finite. Thus we have $h(\nu)\in[-\infty,\infty)$.
In particular, the free entropy is maximized 
by the standard semicircle measure $\mu_w$ with the value $\chi(\mu_w)=\frac 12\log 2\pi e$ among all 
probability measures with variance one, see~\cite{HiPe:2000}, \cite{Vo:1997}. Wang~\cite{Wa:2010}
has proved the free analogue of Barron's result. We give the rate of convergence in the free CLT
for bounded free random variables.
%We refer to~\cite{Vo:2002} as well.
\begin{corollary}\label{corth7.2}
Let $\mu_n$ be the distribution of $Y_n$ from $(\ref{2.1})$ with bounded identically distributed  
free summands $X_1,\dots,X_n$ such that $m_1=0$ and $m_2=1$. For $n\ge c_1(\mu)$,
%Let $p_n,\,n\ge c_1(\mu)$, be the density of $Y_n$ from $(\ref{2.1})$ with bounded identically distributed
%free summands $X_1,\dots,X_n$ such that $m_1=0$ and $m_2=1$. For $n\ge c_1(\mu)$,
\begin{equation}\notag
\chi(\mu_n)=\int\int_{\mathbb R\times \mathbb R}\log|x-y|\,p_n(x)p_n(y)\,dxdy+\frac 34+\frac 12\log 2\pi
=\chi(\mu_w)-\frac {m_3^2}{6n}+\theta\frac{c(\mu)}{n^{3/2}}.  
\end{equation} 
\end{corollary}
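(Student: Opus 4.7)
The plan is to insert the pointwise density expansion of Theorem~\ref{th7} directly into the logarithmic energy
\[
\int\!\!\int\log|x-y|\,p_n(x)p_n(y)\,dx\,dy
\]
and collect contributions up to order $n^{-1}$. Since this integral is translation invariant, I replace $p_n$ by $q_n(x):=p_n(x+a_n)$ and compare it to
\[
\tilde p_n(x):=R_n(x)\,p_w(e_nx),\qquad R_n(x)=1+\tfrac12 d_n-a_n^2-\tfrac1n-a_nx-\bigl(b_n-a_n^2-\tfrac1n\bigr)x^2,
\]
with $\tilde p_n$ supported on $[-2/e_n,2/e_n]$. By Theorem~\ref{th7} and (\ref{asden2}) one has $\int|q_n-\tilde p_n|\,dx\le c(\mu)n^{-3/2}$ (the denominator $\sqrt{4-(e_nx)^2}$ in (\ref{asden1}) is integrable after rescaling). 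Both $q_n\le 2$ and $\tilde p_n\le c(\mu)$, with supports in a fixed compact interval (using the Voiculescu support bound $[-2-L/\sqrt n,2+L/\sqrt n]$). Writing $q_nq_n-\tilde p_n\tilde p_n=(q_n-\tilde p_n)q_n+\tilde p_n(q_n-\tilde p_n)$, and using local integrability of $\log$ against bounded densities on compact sets, I conclude
\[
\int\!\!\int\log|x-y|\bigl(q_n(x)q_n(y)-\tilde p_n(x)\tilde p_n(y)\bigr)dx\,dy=O\!\bigl(c(\mu)n^{-3/2}\bigr).
\]

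Next I compute $J_n:=\int\!\!\int\log|x-y|\,\tilde p_n(x)\tilde p_n(y)\,dx\,dy$ by rescaling $u=e_nx$, $v=e_ny$:
\[
J_n=\frac{1}{e_n^2}\Bigl[\int\!\!\int\log|u-v|\,\hat R_n(u)\hat R_n(v)\,p_w(u)p_w(v)\,du\,dv-(\log e_n)\Bigl(\int\hat R_n\,p_w\Bigr)^{\!2}\Bigr],
\]
with $\hat R_n(u)=R_n(u/e_n)$. Since $1=U_0(u/2)$, $u=U_1(u/2)$, and $u^2=U_2(u/2)+1$, I expand
\[
\hat R_n(u)=\bigl(A_n+C_n/e_n^2\bigr)+(B_n/e_n)U_1(u/2)+(C_n/e_n^2)U_2(u/2)
\]
with $A_n=1+d_n/2-a_n^2-1/n$, $B_n=-a_n$, $C_n=-(b_n-a_n^2-1/n)$. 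The inner integrals are then evaluated from the standard logarithmic-potential identities for the semicircle, valid for $|u|\le 2$:
\[
\int_{-2}^{2}\log|u-v|\,p_w(v)\,dv=\tfrac12 T_2(u/2),\qquad \int_{-2}^{2}\log|u-v|\,U_m(v/2)\,p_w(v)\,dv=-\frac{T_m(u/2)}{m}+\frac{T_{m+2}(u/2)}{m+2}
\]
for $m\ge 1$, derivable from the Fourier expansion $\log|2\sin\frac{\phi\pm\theta}{2}|=-\sum_{k\ge1}k^{-1}\cos k(\phi\pm\theta)$ and orthogonality. Combining these with the moments $\int T_k(u/2)U_\ell(u/2)p_w(u)\,du$ computed by the substitution $u=2\cos\phi$, only three terms survive in $J_n$, giving
\[
J_n=\frac{(1+\gamma_0)^2}{e_n^2}\Bigl(-\tfrac14-\log e_n\Bigr)+\frac{(1+\gamma_0)\gamma_2}{2e_n^2}-\frac{2\gamma_1^2}{3e_n^2}+O(n^{-2}),
\]
where $1+\gamma_0=A_n+C_n/e_n^2$, $\gamma_1=B_n/e_n$, $\gamma_2=C_n/e_n^2$.

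The final step is the algebraic cancellation, which is the main obstacle. Using $d_n=b_n+1/n$ and $e_n=(1-b_n)/\sqrt{1-d_n}=1-b_n/2+1/(2n)+O(n^{-2})$, a direct check yields $(1+\gamma_0)/e_n=1+O(n^{-2})$, so $(1+\gamma_0)^2/e_n^2=1+O(n^{-2})$. Since $-\log e_n=b_n/2-1/(2n)+O(n^{-2})$, the first block contributes $-\tfrac14+b_n/2-1/(2n)+O(n^{-2})$. Because $\gamma_2=C_n+O(n^{-2})=-b_n+a_n^2+1/n+O(n^{-2})$, the second block contributes $-b_n/2+a_n^2/2+1/(2n)+O(n^{-2})$, which exactly kills the $b_n$ and $1/n$ terms from the first block. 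The third block gives $-2\gamma_1^2/3=-2a_n^2/3+O(n^{-2})$. Summing,
\[
J_n=-\tfrac14+\tfrac{a_n^2}{2}-\tfrac{2a_n^2}{3}+O(n^{-2})=-\tfrac14-\tfrac{a_n^2}{6}+O(n^{-2}),
\]
and adding $3/4+\tfrac12\log 2\pi$ and the $O(n^{-3/2})$ approximation error yields $\chi(\mu_n)=\chi(\mu_w)-m_3^2/(6n)+\theta c(\mu)/n^{3/2}$. The delicate point is that the $(b_n,d_n,1/n)$ pieces of $R_n$, the Jacobian $e_n^{-2}$, and the $\log e_n$ correction must align so that only the $a_n^2$ contribution survives; this alignment is precisely the content of the moment-matching choice of $(a_n,b_n,d_n)$ that makes $\mu_{a_n,b_n,d_n}$ the free Meixner approximant to $\mu_n$.
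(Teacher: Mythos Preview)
Your proof is correct and follows essentially the same route as the paper: reduce the logarithmic energy of $\mu_n$ to that of the explicit approximant $v_n(x)=R_n(x)p_w(e_nx)$ via the density expansion of Theorem~\ref{th7} and the tail bound~(\ref{asden2}), then compute the resulting integral exactly to order $n^{-1}$. The only cosmetic difference is that you organise the calculation through the Chebyshev basis $U_0,U_1,U_2$ and the logarithmic-potential identities, whereas the paper splits the same quantity into four explicit pieces $\tilde I_1,\dots,\tilde I_4$ and evaluates them directly; the cancellations you highlight (the $(b_n,d_n,1/n)$ terms dropping out so that only $-a_n^2/6$ survives) are exactly what the paper obtains from $\tilde I_1+\tilde I_3=-E(\mu_w)-a_n^2/6+O(n^{-2})$ together with $\tilde I_2=0$ and $\tilde I_4=O(n^{-2})$.
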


Suppose that a measure $\nu$ has a density $p$ in $L^3(\mathbb R)$. Then, following Voiculescu~\cite{Vo:1993},
the free Fisher information of $\nu$ is
\begin{equation}\notag
 \Phi(\nu)=\frac{4\pi^2}3\int_{\mathbb R}p(x)^3\,dx.
\end{equation}
It is well-known that $\Phi(w)=1$. Moreover, the free Cram\'er-Rao inequality shown in \cite{Vo:1993}
says that $\Phi(\nu)\int_{\mathbb R}\Big(x-\int_{\mathbb R}\,u\,\nu(du)\Big)^2\nu(dx)\ge 1$, and equality
holds if and only if $\nu$ is a measure with a semicircle distribution function.
We obtain the following result for bounded free random variables.
\begin{corollary}\label{corth7.3}
Let $\mu_n$ be the distribution of $Y_n$ from $(\ref{2.1})$ with bounded identically distributed  
free summands $X_1,\dots,X_n$ such that $m_1=0$ and $m_2=1$. For $n\ge c_1(\mu)$,
%Let $p_n,\,n\ge c_1(\mu)$, be the density of $Y_n$ from $(\ref{2.1})$ with bounded identically distributed
%free summands $X_1,\dots,X_n$ such that $m_1=0$ and $m_2=1$. For $n\ge c_1(\mu)$,
\begin{equation}\label{2.11}
\Phi(\mu_n)=\frac{4\pi^2}3\int_{\mathbb R}p_n(x)^3\,dx=\Phi(\mu_w)+\frac{m_3^2}n+\theta\frac{c(\mu)}{n^{3/2}}.  
\end{equation} 
\end{corollary}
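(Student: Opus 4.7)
The plan is to substitute the density expansion (\ref{asden1}) into $\int_{\mathbb R} p_n(x)^3\,dx$ and track all contributions up to order $n^{-1}$. By translation invariance I would first write $\int p_n(x)^3\,dx = \int p_n(x+a_n)^3\,dx$, then split the integration into the main interval $I_n:=[-2/e_n+h,\,2/e_n-h]$ and its complement. On $I_n^c$, the uniform bound $p_n(x)\le 2$ from Theorem~\ref{th7} combined with (\ref{asden2}) gives
\begin{equation*}
\int_{I_n^c} p_n(x+a_n)^3\,dx\le 4\int_{I_n^c}p_n(x+a_n)\,dx\le \frac{c(\mu)}{n^{3/2}}.
\end{equation*}

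On $I_n$, I would insert the expansion $p_n(x+a_n)=A_n(x)\,p_w(e_nx)+R_n(x)$ with
\begin{equation*}
A_n(x)=1+u_n+v_n x+w_n x^2,\qquad u_n=\tfrac{d_n}{2}-a_n^2-\tfrac{1}{n},\q v_n=-a_n,\q w_n=-\bigl(b_n-a_n^2-\tfrac{1}{n}\bigr),
\end{equation*}
and $|R_n(x)|\le c(\mu)\,n^{-3/2}(4-(e_nx)^2)^{-1/2}$, then expand $(A_np_w+R_n)^3$. Every cross term containing a factor of $R_n$ is controlled via $|A_n|\le c$, $p_w(e_nx)\le \tfrac{1}{\pi}\sqrt{4-(e_nx)^2}$ and the elementary bounds $\int_{I_n}dx/\sqrt{4-(e_nx)^2}\le \pi/e_n$ and $\int_{I_n}\sqrt{4-(e_nx)^2}\,dx\le 2\pi/e_n$; the worst term, $\int_{I_n}R_n^3\,dx$, is $O(n^{-9/2}h^{-1/2})=O(n^{-15/4})$ thanks to the cutoff $h=c_2(\mu)/n^{3/2}$. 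In total these terms contribute $O(c(\mu)/n^{3/2})$.

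The principal term is $\int_{I_n}A_n(x)^3 p_w(e_nx)^3\,dx$. Substituting $y=e_nx$ and extending to $[-2,2]$ at cost $O(n^{-15/4})$, it becomes $\frac{1}{e_n}\int_{-2}^{2}A_n(y/e_n)^3 p_w(y)^3\,dy$. Expanding $A_n(y/e_n)^3$ to order $n^{-3/2}$ yields $1+3u_n+3v_n y/e_n+3(w_n+v_n^2)y^2/e_n^2$; the elementary moments $\int_{-2}^{2} p_w(y)^3\,dy=\tfrac{3}{4\pi^2}$, $\int_{-2}^{2} y\,p_w(y)^3\,dy=0$, $\int_{-2}^{2} y^2 p_w(y)^3\,dy=\tfrac{1}{2\pi^2}$ follow from $y=2\sin\theta$. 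Combining these with $\tfrac{1}{e_n}=1+\tfrac{m_4-m_3^2-2}{2n}+O(n^{-2})$ and the explicit values $u_n=\tfrac{m_4-3m_3^2-2}{2n}$, $w_n+v_n^2=-\tfrac{m_4-3m_3^2-2}{n}$, the coefficient of $1/n$ collapses to $3m_3^2/(4\pi^2)$. Hence $\int p_n(x)^3\,dx=\tfrac{3}{4\pi^2}+\tfrac{3m_3^2}{4\pi^2 n}+O(c(\mu)/n^{3/2})$, and multiplication by $4\pi^2/3$ produces (\ref{2.11}).

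The main obstacle is this last arithmetic step: four independent sources of order-$1/n$ corrections ($u_n$, $v_n^2$, $w_n$, and the Jacobian $1/e_n$) each produce terms involving $m_4$, $m_3^2$ and the constant $2$, and only after summation do the $(m_4-3m_3^2-2)$-contributions cancel, leaving the clean $m_3^2/n$ announced in (\ref{2.11}). The $R_n$-bounds and boundary truncations are routine once the integrable singularity $1/\sqrt{4-(e_nx)^2}$ is paired with the cutoff $h=c_2(\mu)/n^{3/2}$.
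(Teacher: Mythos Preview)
Your proposal is correct and follows essentially the same route as the paper: split the integral into the main interval and its complement, bound the latter via (\ref{asden2}) and the uniform density bound, insert the expansion (\ref{asden1}) (the paper uses its equivalent form (\ref{loc.6})) on the main interval, and reduce to the moments $\int p_w^3=\tfrac{3}{4\pi^2}$, $\int y\,p_w^3=0$, $\int y^2 p_w^3=\tfrac{1}{2\pi^2}$. Your treatment of the cross terms with $R_n$ and of the extension to $[-2,2]$ is slightly more explicit than the paper's, but the structure and the final cancellation of the $(m_4-3m_3^2-2)$-contributions are identical.
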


\section{Auxiliary results}

We need results about some classes of analytic functions
(see {\cite{Akh:1965}, Section~3, and {\cite{AkhG:1963}},
Section~6, \S 59). 

The~class $\mathcal N$ (Nevanlinna, R.) is the~class of analytic 
functions $f(z):\mathbb C^+\to\{z: \,\Im z\ge 0\}$.
For such functions there is the~integral representation
\begin{equation}\label{3.1}
f(z)=a+bz+\int_{\mathbb R}\frac{1+uz}{u-z}\,\tau(du)=
a+bz+\int_{\mathbb R}\Big(\frac 1{u-z}-\frac u{1+u^2}\Big)(1+u^2)
\,\tau(du),\quad z\in\mathbb C^+,
\end{equation}
where $b\ge 0$, $a\in\mathbb R$, and $\tau$ is a~nonnegative finite
measure. Moreover, $a=\Re f(i)$ and $\tau(\mathbb R)=\Im f(i)-b$.   
From this formula it follows that 
\begin{equation}\label{3.2}
f(z)=(b+o(1))z
\end{equation} 
for $z\in\mathbb C^+$
such that $|\Re z|/\Im z$ stays bounded as $|z|$ tends to infinity (in other words
$z\to\infty$ nontangentially to $\mathbb R$).
Hence if $b\ne 0$, then $f$ has a~right inverse $f^{(-1)}$ defined
on the~region 
$$
\Gamma_{\alpha,\beta}:=\{z\in\mathbb C^+:|\Re z|<\alpha \Im z,\,\Im z>\beta\}
$$
for any $\alpha>0$ and some positive $\beta=\beta(f,\alpha)$.

A~function $f\in\mathcal N$ admits the~representation
\begin{equation}\label{3.3}
f(z)=\int_{\mathbb R}\frac{\sigma(du)}{u-z},\quad z\in\mathbb C^+,
\end{equation}
where $\sigma$ is a~finite nonnegative measure, if and only if
$\sup_{y\ge 1}|yf(iy)|<\infty$.                                      

For $\mu\in\mathcal M$, consider its Cauchy transform $G_{\mu}(z)$
(see (\ref{2.3a})).
The measure $\mu$ can be recovered from $G_{\mu}(z)$ as
the weak limit of the measures
\begin{equation}\notag
\mu_y(dx)=-\frac 1{\pi}\Im G_{\mu}(x+iy)\,dx,\quad x\in\mathbb R,\,\,y>0,
\end{equation}
as $y\downarrow 0$. If the function $\Im G_{\mu}(z)$ is continuous at $x\in\mathbb R$,
then the probability distribution function $D_{\mu}(t)=\mu((-\infty,t))$ is differentiable
at $x$ and its derivative is given by 
\begin{equation}\label{3.4}
D_{\mu}'(x)=-\Im G_{\mu}(x)/\pi. 
\end{equation}
This inversion formula
allows to extract the density function of the measure $\mu$ from its Cauchy transform.

Following Maassen~\cite{Ma:1992} and Bercovici and 
Voiculescu~\cite{BeVo:1993}, we shall consider in the~following
the~ {\it reciprocal Cauchy transform}
\begin{equation}\label{3.5}
F_{\mu}(z)=\frac 1{G_{\mu}(z)}.
\end{equation}
The~corresponding class of reciprocal Cauchy
transforms of all $\mu\in\mathcal M$ will be denoted by $\mathcal F$.
This class coincides with the~subclass of Nevanlinna functions $f$
for which $f(z)/z\to 1$ as $z\to\infty$ nontangentially to $\mathbb R$.
Indeed, reciprocal Cauchy transforms of probability measures have obviously such 
property. Let $f\in\Cal N$ and $f(z)/z\to 1$ as $z\to\infty$ nontangentially 
to $\mathbb R$. Then, by (\ref{3.2}), $f$ admits the~representation~(\ref{3.1}) with $b=1$.
By (\ref{3.2}) and (\ref{3.3}), $-1/f(z)$ admits 
the~representation~(\ref{3.3}) with $\sigma\in\Cal M$.

The~functions $f$ of the~class $\mathcal F$ satisfy the~inequality
\begin{equation}\label{3.5**}
\Im f(z)\ge \Im z,\qquad z\in\mathbb C^+.
\end{equation}

The~function $\phi_{\mu}(z)=F_{\mu}^{(-1)}(z)-z$ is called
the~Voiculescu transform of $\mu$ and
$\phi_{\mu}(z)$ is an~analytic function on $\Gamma_{\alpha,\beta}$ 
with the~property 
$\Im \phi_{\mu}(z)\le 0$ for $z\in \Gamma_{\alpha,\beta}$, where 
$\phi_{\mu}(z)$ is defined. 
On the~domain $\Gamma_{\alpha,\beta}$, where the~functions $\phi_{\mu_1}(z)$,
$\phi_{\mu_2}(z)$, and $\phi_{\mu_1\boxplus\mu_2}(z)$ are defined, we have
\begin{equation}\label{3.6}
\phi_{\mu_1\boxplus\mu_2}(z)=\phi_{\mu_1}(z)+\phi_{\mu_2}(z).
\end{equation} 
This relation  for the~distribution $\mu_1\boxplus\mu_2$ of $X+Y$,
where $X$ and $Y$ are free random variables, is due to
Voiculescu~\cite{Vo:1986}
for the case of  compactly supported measures.
The~result was extended by Maassen~\cite{Ma:1992} to measures 
with finite variance; the~general case was proved by 
Bercovici and Voiculescu~\cite{BeVo:1993}.

Assume that $\beta_k<\infty$ for some $k\in\mathbb N$. Then
$$
G_{\mu}(z)=\frac 1z+\frac{m_1}{z^2}+\dots+\frac{m_k}{z^{k+1}}
+o\Big(\frac 1{z^{k+1}}\Big),\quad z\to\infty,\,\,z\in\Gamma_{\alpha,1}.
$$
It follows from this relation (see for example \cite{Ka:2007a}) that
\begin{equation}\label{3.7}
\phi_{\mu}(z)=\alpha_1+\frac{\alpha_2}{z}+\dots
+\frac{\alpha_{k}}{z^{k-1}}+o\Big(\frac 1{z^{k-1}}\Big),
\quad z\to\infty,\,\,z\in\Gamma_{\alpha,1}.
\end{equation}
We call the~coefficients $\alpha_m,\,m=1,\dots,k$, the free cumulants
of the~probability measure $\mu$. It is easy to see that $\alpha_1=m_1,
\alpha_2=m_2-m_1^2,\,\alpha_3(\mu)=m_3-3m_1m_2
+2m_1^3$. In the~case $m_1=0$ and $m_2=1$ we have $\alpha_1=0,\,
\alpha_2=1,\,\alpha_3=m_3$ and $\alpha_4=m_4-2$.

If $\mu\in\mathcal M$ has moments of any order, 
that is $\beta_k<\infty$ for any $k\in\mathbb N$, then there exist cumulants
$\alpha_m,\,m=1,\dots$, and we can consider the~formal power series
\begin{equation}\label{3.8}  
\phi_{\mu}(z)=\sum_{m=1}^{\infty}\frac{\alpha_{m}}{z^{m-1}}.
\end{equation}
In addition $\phi_{\mu}(z)$ satisfies (\ref{3.7}) for any fixed $k\in\mathbb N$.
If $\mu$ has a~bounded support, $\phi_{\mu}(z)$ is an~analytic function
on the~domain $|z|>R$ with some $R>0$ and the~series (\ref{3.8}) converges
absolutely and uniformly for such $z$.

Voiculescu~\cite{Vo:1993} showed for compactly supported probability measures that
there exist unique functions $Z_1, Z_2\in\mathcal F$ such that
$G_{\mu_1\boxplus\mu_2}(z)=G_{\mu_1}(Z_1(z))=G_{\mu_2}(Z_2(z))$ 
for all $z\in\mathbb C^+$.
Using Speicher's combinatorial approach~\cite{Sp:1998} to freeness,
Biane~\cite{Bi:1998} proved this result in the~general case.

Chistyakov and G\"otze \cite {ChG:2005}, Bercovici and Belinschi~\cite{BelBer:2007},
Belinschi~\cite{Bel:2008}, 
proved, using complex analytic methods, that
there exist unique functions $Z_1(z)$ and $Z_2(z)$ in the~class 
$\mathcal F$ such that, for $z\in\mathbb C^+$, 
\begin{equation}\label{3.9}
z=Z_1(z)+Z_2(z)-F_{\mu_1}(Z_1(z))\quad\text{and}\quad
F_{\mu_1}(Z_1(z))=F_{\mu_2}(Z_2(z)). 
\end{equation}
The~function $F_{\mu_1}(Z_1(z))$ belongs again to the~class $\mathcal F$ and 
there exists 
$\mu\in\mathcal M$ such that
$F_{\mu_1}(Z_1(z)) =F_{\mu}(z)$, where $F_{\mu}(z)=1/G_{\mu}(z)$ and 
$G_{\mu}(z)$ is the~Cauchy transform as in (\ref{2.3a}). 
We can define the~additive free convolution in the~following way
$\mu_1\boxplus\mu_2:=\mu$. 
The~measure $\mu$ depends on $\mu_1$ and $\mu_2$ only.
The~relation (\ref{3.6}) follows immediately from (\ref{3.9}) and we see that
this definition coincides with the~Voiculescu, Bercovici, Maassen definition.
Hence we have the~equivalence of a~''characteristic function'' approach
and a~probabilistic approach to the~definition of 
the~additive free convolution.

Specializing to $\mu_1=\mu_2=\dots=\mu_n=\mu$ write $\mu_1\boxplus\dots\boxplus\mu_n=
\mu^{n\boxplus}$.
The~relation (\ref{3.9}) admits the~following
consequence (see for example \cite{ChG:2005}).

\begin{proposition}\label{3.3pro}
Let $\mu\in\mathcal M$. There exists a~unique function $Z\in\mathcal F$ 
such that
\begin{equation}\label{3.10}
z=nZ(z)-(n-1)F_{\mu}(Z(z)),\quad z\in\mathbb C^+,
\end{equation}
and $F_{\mu^{n\boxplus}}(z)=F_{\mu}(Z(z))$.
\end{proposition}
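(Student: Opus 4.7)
The plan is to prove the proposition by induction on $n$, bootstrapping from the general subordination result $(\ref{3.9})$. For the base case $n=1$, the choice $Z(z)=z$ trivially satisfies both $z = 1\cdot Z(z) - 0\cdot F_\mu(Z(z))$ and $F_{\mu^{1\boxplus}}(z)=F_\mu(Z(z))$.

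For the inductive step, I would apply $(\ref{3.9})$ with $\mu_1=\mu$ and $\mu_2=\mu^{(n-1)\boxplus}$ to obtain unique $W_1,W_2\in\mathcal F$ with
\begin{equation}
z=W_1(z)+W_2(z)-F_\mu(W_1(z)),\quad F_\mu(W_1(z))=F_{\mu^{(n-1)\boxplus}}(W_2(z))=F_{\mu^{n\boxplus}}(z).\notag
\end{equation}
By the inductive hypothesis applied to $\mu^{(n-1)\boxplus}$, there is a unique $Y\in\mathcal F$ with $F_{\mu^{(n-1)\boxplus}}(w)=F_\mu(Y(w))$ and $w=(n-1)Y(w)-(n-2)F_\mu(Y(w))$. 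Evaluating at $w=W_2(z)$ yields $F_\mu(Y(W_2(z)))=F_\mu(W_1(z))$.

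The key technical step is the identification $Y(W_2(z))=W_1(z)$. Both sides lie in $\mathcal F$ (compositions of Nevanlinna functions with the asymptotic $f(z)/z\to 1$ nontangentially), so each maps some truncated Stolz angle $\Gamma_{\alpha,\beta}$ into a Stolz angle on which $F_\mu$ has a right inverse (guaranteed by $(\ref{3.2})$ applied to $F_\mu$ itself). Hence the two sides agree on such a region near infinity, and by the identity principle they agree throughout $\mathbb C^+$. Setting $Z(z):=W_1(z)$, we get $F_{\mu^{n\boxplus}}(z)=F_\mu(Z(z))$ directly, and substituting the inductive hypothesis' equation $W_2(z)=(n-1)Z(z)-(n-2)F_\mu(Z(z))$ into $z=W_1(z)+W_2(z)-F_\mu(W_1(z))$ telescopes to $z=nZ(z)-(n-1)F_\mu(Z(z))$.

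For uniqueness, suppose $\widetilde Z\in\mathcal F$ also satisfies the two properties. Then $F_\mu(\widetilde Z(z))=F_{\mu^{n\boxplus}}(z)=F_\mu(Z(z))$ on $\mathbb C^+$, and the same injectivity-near-infinity-plus-identity-principle argument gives $\widetilde Z=Z$. The main obstacle is precisely this ``effective injectivity'' of $F_\mu$: $F_\mu$ is not globally injective on $\mathbb C^+$, so one must carefully invoke $(\ref{3.2})$ to produce a right inverse on an appropriate $\Gamma_{\alpha,\beta}$ and verify that functions in $\mathcal F$ eventually enter such an angle in order to cancel $F_\mu$ from both sides before invoking analytic continuation.
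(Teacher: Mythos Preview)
Your inductive proof via the two-measure subordination relation $(\ref{3.9})$ is correct. The paper does not give an in-line proof of this proposition; it merely states that it is a consequence of $(\ref{3.9})$ and cites \cite{ChG:2005}, so your derivation is precisely in the spirit of what the paper asserts. The only delicate step is the cancellation $F_\mu(Y\circ W_2)=F_\mu(W_1)\Rightarrow Y\circ W_2=W_1$, and you handle it properly: both sides are in $\mathcal F$ (for the composition, note $\Im W_2(z)\ge\Im z$ and $W_2(z)/z\to 1$ nontangentially force $W_2$ to map each Stolz angle $\Gamma_{\alpha,\beta}$ into some $\Gamma_{\alpha',\beta'}$), so on a suitable $\Gamma_{\alpha,\beta}$ both land in the domain where $F_\mu^{(-1)}$ exists, and the identity theorem finishes.

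It is worth noting, however, that the paper supplies a tool---Lemma~\ref{l7.3}, stated immediately after the proposition---that gives a cleaner, non-inductive route. Writing
\[
nw-(n-1)F_\mu(w)=w+g(w),\qquad g(w):=(n-1)\bigl(w-F_\mu(w)\bigr),
\]
one has $g:\mathbb C^+\to\overline{\mathbb C^-}$ by $(\ref{3.5**})$ and $g(iy)/y\to 0$ since $F_\mu\in\mathcal F$. Lemma~\ref{l7.3} then says this map takes every value in $\mathbb C^+$ exactly once and its inverse $Z$ lies in $\mathcal F$; this yields $(\ref{3.10})$ together with uniqueness in one stroke, with no iteration and no appeal to local injectivity of $F_\mu$. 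The identity $F_{\mu^{n\boxplus}}(z)=F_\mu(Z(z))$ then follows from a single use of $(\ref{3.9})$ (or from $(\ref{3.6})$). Your induction works, but this direct argument is what the paper's later lemmas (see the proof of Lemma~\ref{l7.4}) actually exploit.
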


The~next lemma was proved in \cite{ChG:2005}.
\begin{lemma}\label{l7.3}
Let $g:\mathbb C^+\to \mathbb C^-$ be analytic with
\begin{equation}\label{7.7}
\liminf_{y\to+\infty}\frac {|g(iy)|}y=0.
\end{equation}
Then the~function $f:\mathbb C^+\to\mathbb C$ defined via $z\mapsto z+g(z)$
takes every value in $\mathbb C^+$ precisely once. The~inverse
$f^{(-1)}:\mathbb C^+\to \mathbb C^+$ thus defined is in the~class $\mathcal F$.
\end{lemma}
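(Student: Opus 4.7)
The plan is to combine Nevanlinna theory with a Denjoy--Wolff-type fixed-point argument. First I would observe that since $g:\mathbb C^+\to\mathbb C^-$ is analytic, the function $-g$ lies in the Nevanlinna class $\mathcal N$ and by (\ref{3.1}) admits a representation with parameter $b\ge 0$. The non-tangential asymptotic (\ref{3.2}) gives $-g(iy)/(iy)\to b$, hence $|g(iy)|/y\to b$; assumption (\ref{7.7}) therefore forces $b=0$, and (\ref{3.2}) then yields $g(z)/z\to 0$ as $z\to\infty$ non-tangentially to $\mathbb R$.

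Next, fix $w\in\mathbb C^+$ and set $h_w(z):=w-g(z)$. Since $\Im h_w(z)=\Im w+|\Im g(z)|\ge \Im w>0$, the map $h_w$ sends $\mathbb C^+$ into $\{\zeta:\Im\zeta\ge \Im w\}\subset\mathbb C^+$, and $f(z)=w$ with $z\in\mathbb C^+$ is equivalent to $z$ being a fixed point of $h_w$. The map $h_w$ is not the identity (otherwise $g(z)=w-z$ would fail to lie in $\mathbb C^-$ for $z=iy$ with $0<y<\Im w$). I would then apply the Denjoy--Wolff theorem: if $h_w$ had no fixed point in $\mathbb C^+$ the iterates would converge locally uniformly to a boundary point $p\in\mathbb R\cup\{\infty\}$. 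Since $\Im h_w^n(z_0)\ge \Im w>0$ for $n\ge 1$, the case $p\in\mathbb R$ is excluded. For $p=\infty$, the Julia--Carathéodory theorem would yield $\lim_{y\to\infty}h_w(iy)/(iy)\ge 1$, yet $h_w(iy)/(iy)=w/(iy)-g(iy)/(iy)\to 0$ by the first paragraph, a contradiction. Hence $h_w$ has a fixed point in $\mathbb C^+$, unique by Schwarz--Pick applied to the non-identity self-map $h_w$. Setting $f^{(-1)}(w)$ equal to this unique fixed point and defining $U:=\{z\in\mathbb C^+:f(z)\in\mathbb C^+\}$ (open by continuity of $f$), $f|_U:U\to\mathbb C^+$ is a holomorphic bijection, and by the open mapping theorem $f^{(-1)}:\mathbb C^+\to U\subset\mathbb C^+$ is holomorphic.

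Finally, to show $f^{(-1)}\in\mathcal F$, I would use the characterization recalled just before the lemma. Being a holomorphic self-map of $\mathbb C^+$, $f^{(-1)}\in\mathcal N$, and by (\ref{3.2}) the limit $b':=\lim_{y\to\infty}f^{(-1)}(iy)/(iy)\in[0,\infty)$ exists. Let $z=f^{(-1)}(iy)$, so $z+g(z)=iy$; from $\Im z\ge y$ one gets $|z|/y\ge 1$, hence $b'\ge 1$. Then $z\sim b'\,iy$ forces $\Re z=o(y)$ and $\Im z\sim b'y$, so $z\to\infty$ non-tangentially as $y\to\infty$. By the first paragraph, $g(z)/z\to 0$, and therefore
\begin{equation*}
\frac{g(z)}{iy}=\frac{g(z)}{z}\cdot\frac{z}{iy}\to 0\cdot b'=0.
\end{equation*}
On the other hand $g(z)/(iy)=1-z/(iy)\to 1-b'$, forcing $b'=1$ and thus $f^{(-1)}\in\mathcal F$.

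The main obstacle I expect is translating the one-sided hypothesis (\ref{7.7}) into the existence of an interior fixed point of $h_w$, which requires ruling out both the real and the $\infty$ Denjoy--Wolff points via Julia--Carathéodory; uniqueness, holomorphicity of the inverse, and the identification of the Nevanlinna parameter at infinity are routine once that is in place.
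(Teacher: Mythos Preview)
Your proposal is correct. The paper does not actually prove Lemma~\ref{l7.3}; it simply records that the result was proved in \cite{ChG:2005} and remarks that it extends Maassen's earlier version. So there is no in-paper argument to compare against, but your Denjoy--Wolff approach is a clean, self-contained route.

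A few remarks on the details. Your first paragraph correctly upgrades the $\liminf$ hypothesis to a genuine limit: since $-g\in\mathcal N$, formula~(\ref{3.2}) already forces $-g(iy)/(iy)\to b\ge 0$, hence the $\liminf$ being $0$ gives $b=0$. This is the key reduction. In the fixed-point step, the exclusion of a real Denjoy--Wolff point via $\Im h_w^{\circ n}(z_0)\ge \Im w$ is immediate, and the exclusion of $\infty$ is exactly the half-plane Julia--Wolff--Carath\'eodory statement that the Denjoy--Wolff point is $\infty$ only when the angular derivative at $\infty$ (i.e.\ the Nevanlinna parameter $b$ of $h_w$) is $\ge 1$; you have just computed that this parameter is $0$. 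Uniqueness of the interior fixed point follows from Schwarz--Pick for non-identity self-maps, and your check that $h_w$ is not the identity is fine (it also covers the degenerate case where $g$ is a real constant, which one might otherwise want to mention separately). Finally, the verification $b'=1$ for $f^{(-1)}$ is carried out correctly: the bound $\Im z\ge y$ gives $b'\ge 1$, which in turn guarantees that $z=f^{(-1)}(iy)\to\infty$ nontangentially, so step~1 applies and forces $b'=1$.

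The one place you might tighten the exposition is the citation of ``Julia--Carath\'eodory'': what you use is the specific equivalence between the Denjoy--Wolff point being $\infty$ and the Nevanlinna $b$-coefficient being $\ge 1$, which is standard but worth stating precisely.
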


This lemma generalizes a~result of Maassen~\cite{Ma:1992} (see Lemma~2.3).
Maassen proved Lemma~\ref{l7.3} under the~additional restriction $|g(z)|
\le c(g)/\Im z$ for $z\in\mathbb C^+$, where $c(g)$ is a~constant depending
on $g$.

Using the~representation (\ref{3.1}) for $F_{\mu}(z)$ we obtain
\begin{equation}\label{7.8}
F_{\mu}(z)=z+\Re F_{\mu}(i)+\int_{\mathbb R}\frac {(1+uz)\,\tau(du)}{u-z},
\quad z\in\mathbb C^+,
\end{equation}
where $\tau$ is a~nonnegative measure such that $\tau(\mathbb R)=\Im F_{\mu}(i)-1$.
Denote $z=x+iy$, where $x,y\in\mathbb R$. We see that, for $\Im z>0$, 
%Consider the~functional equation, for every real fixed $x$,
\begin{equation}\notag
\Im \Big(nz-(n-1)F_{\mu}(z)\Big)=y\Big(1-(n-1)I_{\mu}(x,y)\Big),\quad\text{where}\quad
I_{\mu}(x,y):=\int_{\mathbb R}\frac{(1+u^2)\,\tau(du)}{(u-x)^2+y^2}.
\end{equation}
For every real fixed $x$, consider the~equation
\begin{equation}\label{7.9}
 y\Big(1-(n-1)I_{\mu}(x,y)\Big)=0,\quad y>0.
\end{equation}
Since $y\mapsto I_{\mu}(x,y),\,y>0$, is positive and monotone, and decreases to $0$ as $y\to\infty$,
it is clear that the~equation (\ref{7.9}) has at most one positive solution. If such a~solution exists, denote it
by $y_n(x)$.
Note that (\ref{7.9}) does not have a~solution $y>0$ for any given $x\in\mathbb R$ if and only if
$I_{\mu}(x,0)\le 1/(n-1)$.
Consider the~set $S:=\{x\in\mathbb R:I_{\mu}(x,0)\le 1/(n-1)\}$. We put $y_n(x)=0$ for $x\in S$. 
By Fatou's lemma,
$I_{\mu}(x_0,0)\le\lim\inf_{x\to x_0}I_{\mu}(x,0)$ for any given $x_0\in\mathbb R$, hence
the~set $S$ is closed.
Therefore $\mathbb R\setminus S$ is the~union of finitely or countably many intervals $(x_k,x_{k+1}),\,x_k<x_{k+1}$.
The~function $y_n(x)$ is continuous on the~interval $(x_k,x_{k+1})$. 
Since the~set $\{z\in\mathbb C^+:n\Im z-(n-1)\Im F_{\mu}(z)>0\}$
is open, we see that $y_n(x)\to 0$ if $x\downarrow x_k$ and $x\uparrow x_{k+1}$.
Hence the~curve $\gamma_n$ given by the~equation $z=x+iy_n(x),\,x\in\mathbb R$, is continuous and simple.

Consider the~open domain $D_n:=\{z=x+iy,\,x,y\in\mathbb R: y>y_n(x)\}$.
\begin{lemma}\label{l7.4}
Let $Z\in\mathcal F$ be the~solution of the~equation $(\ref{3.10})$.
The function $Z(z)$ maps $\mathbb C^+$
conformally onto $D_n$. 
Moreover the~function $Z(z),\,z\in\mathbb C^+$, 
is continuous up to the~real axis and it establishes a homeomorphism between the real axis and 
the~curve $\gamma_n$.   
%The~map $Z(z):\mathbb C^+\mapsto D_n$ is univalent. Moreover the~function $Z(z),\,z\in\mathbb C^+$, 
%is continuous up to the~real axis and it maps the~real axis onto the~curve $\gamma_n$.
\end{lemma}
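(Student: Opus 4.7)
My plan is to read equation~(\ref{3.10}) as asserting that $Z$ is the right inverse of the analytic function $H(w) := nw - (n-1)F_\mu(w)$ on $\mathbb C^+$, to apply Lemma~\ref{l7.3} to $H$ in order to get existence, injectivity, and the $\mathcal F$-property in one stroke, and then to read off the image of $Z$ and the boundary correspondence from the level set structure of $\Im H$.

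\emph{Step 1 (inverting $H$).} Write $H(w) = w + g(w)$ with $g(w) := -(n-1)(F_\mu(w) - w)$. The representation (\ref{7.8}) yields $\Im(F_\mu(w) - w) = y\, I_\mu(x,y) \ge 0$ for $w = x + iy \in \mathbb C^+$, so $g$ maps $\mathbb C^+$ into $\mathbb C^-$; the degenerate case $\tau \equiv 0$ (i.e.\ $\mu$ a Dirac mass) reduces $H$ to a real translation and is handled directly. Since $F_\mu \in \mathcal F$ satisfies $F_\mu(iy)/(iy) \to 1$, one has $|g(iy)|/y \to 0$ as $y \to \infty$, verifying hypothesis~(\ref{7.7}). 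Lemma~\ref{l7.3} then provides an analytic inverse $Z := H^{(-1)} : \mathbb C^+ \to \mathbb C^+$ with $Z \in \mathcal F$ and $H(Z(z)) = z$ for every $z \in \mathbb C^+$, which is precisely (\ref{3.10}).

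\emph{Step 2 (image and conformality).} As the inverse of an injective holomorphic map, $Z$ is a conformal bijection of $\mathbb C^+$ onto $\Omega := \{w \in \mathbb C^+ : \Im H(w) > 0\}$. From the identity $\Im H(w) = y\bigl(1 - (n-1)I_\mu(x,y)\bigr)$ and the strict monotonicity of $y \mapsto I_\mu(x,y)$ on $(0,\infty)$, the condition $\Im H(w) > 0$ is equivalent to $y > y_n(x)$, and hence $\Omega = D_n$.

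\emph{Step 3 (extension to the boundary).} Finiteness of $\tau$ and dominated convergence force $I_\mu(x,0) \to 0$ as $|x| \to \infty$, so $y_n(x) \to 0$ at infinity. Combined with the continuity and simplicity of $\gamma_n$ already established in the text, this makes $\gamma_n \cup \{\infty\}$ a Jordan curve on the Riemann sphere and $D_n \cup \{\infty\}$ a Jordan domain in $\hat{\mathbb C}$. Because $Z \in \mathcal F$ satisfies $Z(z)/z \to 1$ as $z \to \infty$, the map fixes $\infty$; Carath\'eodory's extension theorem then promotes $Z$ to a homeomorphism between the closures in $\hat{\mathbb C}$, and restriction to the real line yields the asserted homeomorphism $\mathbb R \to \gamma_n$.

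\emph{Main obstacle.} The delicate point is Step 3 at the touch-points $x \in S$ where $\gamma_n$ meets $\mathbb R$: there $F_\mu$ is a priori only a generic Nevanlinna function with no preferred boundary values. One must exploit the defining bound $I_\mu(x,0) \le 1/(n-1)$ for $x \in S$: a Cauchy--Schwarz application to (\ref{7.8}) shows that $F_\mu$, and hence $H$, extends continuously from $\mathbb C^+$ through $S$, which secures the local connectedness of $\gamma_n \cup \{\infty\}$ at the touch-points and legitimizes the use of Carath\'eodory. A hands-on alternative that bypasses Carath\'eodory is to verify directly that $t \mapsto H(t + i y_n(t))$ is a continuous, strictly monotone bijection of $\mathbb R$ onto $\mathbb R$ and to combine this with the openness of $Z$.
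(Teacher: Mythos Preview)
Your proposal is correct and follows essentially the same route as the paper: both apply Lemma~\ref{l7.3} to $H(w)=nw-(n-1)F_\mu(w)$, identify the image of its inverse as $D_n$ via the sign analysis of $\Im H$, and then invoke boundary-correspondence theory for the extension to $\mathbb R$. The paper compresses Step~3 into a one-line citation of Markushevich, whereas you spell out the Carath\'eodory argument and the check that $y_n(x)\to 0$ at infinity; your discussion of the touch-points $x\in S$ is more cautious than strictly necessary (the text preceding the lemma already established continuity of $\gamma_n$ across $S$), but it does no harm.
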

\begin{proof}
We obtain from (\ref{3.10}) the~formula
\begin{equation}\label{7.10}
Z^{(-1)}(z)=nz-(n-1)F_{\mu}(z) 
\end{equation}
for $z\in\Gamma_{\alpha,\beta}$ with some $\alpha,\beta>0$. By this formula we may continue
the~function $Z^{(-1)}(z)$ as an~analytic function to $\mathbb C^+$. Using the~representation (\ref{7.8})
for the~function $F_{\mu}(z)$,
we note that $Z^{(-1)}(z)=z+g(z),\,z\in\mathbb C^+$, where $g(z)$ is analytic on $\mathbb C^+$ and satisfies
the~assumptions of Lemma~\ref{l7.3}. 
By Lemma~\ref{l7.3}, we conclude that the~function $Z^{(-1)}(z)$ takes every value in $\mathbb C^+$
precisely once. Moreover, as it is easy to see, $Z^{(-1)}(D_n)=\mathbb C^+$.
The~inverse $Z(z)$ gives us the~conformal mapping
of $\mathbb C^+$ onto $D_n$. By the~well-known results of the~theory of analytic functions (see~\cite{Mar:1965}), 
$Z(z)$ is continuous up to the~real axis and it 
establishes a homeomorphism between the real axis and 
the~curve $\gamma_n$.   
%maps the~real axis onto the~curve $\gamma_n$.
\end{proof}
\begin{lemma}\label{l7.5}
Let $\mu$ be a~probability measure such that $m_1=0,m_2=1$. Assume that $\rho_2(\mu,\sqrt{(n-1)/8})\le 1/10$ for some
positive integer $n\ge 10^3$.
Then the~following inequality holds
\begin{equation}\label{7.11*}
|Z(z)|\ge \sqrt{(n-1)/8},\qquad z\in\mathbb C^+, 
\end{equation} 
where $Z\in\mathcal F$ is the~solution of the~equation $(\ref{3.10})$.
\end{lemma}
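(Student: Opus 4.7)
The plan is to invoke Lemma~\ref{l7.4}: since $Z$ maps $\mathbb C^+$ conformally onto $D_n$, the bound $|Z(z)|\ge R:=\sqrt{(n-1)/8}$ for every $z\in\mathbb C^+$ is equivalent to $D_n\cap\{w:|w|<R\}=\emptyset$. In view of the identity $\Im Z^{(-1)}(w)=y\bigl(1-(n-1)I_\mu(x,y)\bigr)$ for $w=x+iy\in\mathbb C^+$ derived just before Lemma~\ref{l7.4}, it therefore suffices to prove
\[
n\Im w\le (n-1)\Im F_\mu(w)\qquad\text{for all } w\in\mathbb C^+ \text{ with } |w|<R.
\]

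To recast this in terms of $\mu$, set $f(u):=1/(w-u)$ and $I:=\int_{\mathbb R}\mu(du)/|w-u|^2$. From $F_\mu=1/G_\mu$ and $\Im G_\mu(w)=-\Im w\cdot I$ one computes $\Im F_\mu(w)/\Im w=I/|G_\mu(w)|^2$. Applying the polarization identity
\[
\int|f|^2\,d\mu-\Bigl|\int f\,d\mu\Bigr|^2=\tfrac12\int\!\!\int|f(u)-f(v)|^2\mu(du)\mu(dv)
\]
to this $f$ yields
\[
D:=I-|G_\mu(w)|^2=\tfrac12\int\!\!\int\frac{(u-v)^2}{|w-u|^2|w-v|^2}\mu(du)\mu(dv).
\]
The target inequality $(n-1)I\ge n|G_\mu(w)|^2=n(I-D)$ simplifies to $D\ge I/n$. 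A direct expansion shows $D/I=\min_{c\in\mathbb R}\int(u-c)^2\mu(du)/|w-u|^2$ (the minimum is attained at $c=I^{-1}\int u\,\mu(du)/|w-u|^2$), so the problem reduces to the clean variational form
\begin{equation*}
\min_{c\in\mathbb R}\int_{\mathbb R}\frac{(u-c)^2}{|w-u|^2}\mu(du)\ge\frac1n.\tag{$\star$}
\end{equation*}

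To establish $(\star)$ I restrict the integration to $|u|\le R$ and use the triangle-inequality bound $|w-u|\le|w|+|u|<2R$:
\[
\int_{\mathbb R}\frac{(u-c)^2}{|w-u|^2}\mu(du)\ge\frac{1}{4R^2}\int_{|u|\le R}(u-c)^2\mu(du),\qquad c\in\mathbb R.
\]
Minimizing in $c$, the right side becomes $(4R^2)^{-1}\bigl(M_2-M_1^2/M_0\bigr)$ with $M_k:=\int_{|u|\le R}u^k\mu(du)$. The hypotheses $m_1=0$, $m_2=1$ and $\rho_2(\mu,R)\le 1/10$ give $M_2=1-\rho_2(\mu,R)\ge 9/10$, $|M_1|=\bigl|\int_{|u|>R}u\,\mu(du)\bigr|\le\rho_2(\mu,R)/R\le 1/(10R)$ (using $|u|\le u^2/R$ for $|u|>R$), and $M_0\ge 1-\rho_2(\mu,R)/R^2\ge 1-1/(10R^2)$. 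For $n\ge 10^3$ one has $R^2\ge 125$, so $M_2-M_1^2/M_0\ge 9/10-2/(100R^2)>4/5$. Consequently the left side of $(\star)$ is at least $(4/5)/(4R^2)=8/(5(n-1))\ge 1/n$ (the last step since $8n\ge 5(n-1)$ trivially), completing the argument.

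The main conceptual step is the second paragraph: the ratio $I/|G_\mu(w)|^2$ has numerator and denominator that both diverge as $w$ approaches $\operatorname{supp}\mu$, so a naive estimate of $(n-1)I\ge n|G_\mu(w)|^2$ is awkward; the Pythagorean identity $I=|G_\mu(w)|^2+D$ converts it into the bounded variational statement $(\star)$, which then reduces to an elementary second-moment calculation controlled by $\rho_2(\mu,R)\le 1/10$.
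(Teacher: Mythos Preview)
Your proof is correct and takes a genuinely different route from the paper's. Both arguments reduce to showing that no point of the image domain $D_n=Z(\mathbb C^+)$ lies in the open disk of radius $R=\sqrt{(n-1)/8}$, and both ultimately feed on the truncated second moment controlled by $\rho_2(\mu,R)\le 1/10$. The mechanics, however, are quite different.

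The paper expands $G_\mu(z)=1/z+r(z)/z^2$, inverts to $1/G_\mu(z)=z-r(z)+r_1(z)$ with an explicit remainder bound, and then checks the inequality $(n-1)(\Im F_\mu/\Im z-1)>1$ only on the arc $|z|=\tfrac12\sqrt{n-1}$ inside the strip $|\Re z|\le R$; monotonicity of $I_\mu(x,\cdot)$ then pushes the boundary curve $y_n(x)$ above height $R$. In other words, the paper works on a carefully chosen larger arc (radius $R\sqrt 2$) where the Laurent-type expansion of $G_\mu$ has controllable error, and transfers the conclusion to radius $R$ via monotonicity.

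You instead attack the whole disk $\{|w|<R\}$ directly, but first convert the raw inequality $(n-1)I\ge n|G_\mu|^2$ into the variance identity $D/I=\min_{c\in\mathbb R}\int(u-c)^2|w-u|^{-2}\mu(du)$. This is the decisive step: it replaces a ratio of two unbounded quantities by a single bounded variational expression, so the crude bound $|w-u|<2R$ on the truncated range $|u|\le R$ already suffices. Your approach avoids any series expansion of $G_\mu$ or monotonicity argument, and the constants fall out with room to spare. (One cosmetic slip: $R^2=(n-1)/8=124.875$ for $n=10^3$, not $\ge 125$; this is immaterial since your estimate $M_2-M_1^2/M_0>4/5$ holds with a huge margin.)

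What each buys: the paper's expansion of $G_\mu$ is reused verbatim later in Sections~6--7 to derive the functional equations for $S_n(z)$, so their computation is not wasted. Your variance identity is self-contained and more robust --- it would, for instance, adapt immediately to other moment hypotheses without recomputing remainder terms --- but it does not feed into the rest of the paper's machinery.
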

\begin{proof}
Write 
\begin{equation}\label{7.11}
G_{\mu}(z)=\frac 1z+\frac{r(z)}{z^2},\quad z\in\mathbb C^+\quad
\text{where}\quad r(z):=\int_{\mathbb R}\frac{u^2\,\mu(du)}{z-u}.
\end{equation}
It is obvious that $|r(z)|\le 1/\Im z,\,z\in\mathbb C^+$. Rewrite (\ref{7.9}) in the~form
\begin{equation}\notag
\Im z\Big(1+(n-1)\Big(1-\frac 1{\Im z}\Im\frac 1{G_{\mu}(z)}\Big)\Big)=0.
\end{equation}
Let us show that 
\begin{equation}\label{7.12}
y_n(x)> \sqrt{(n-1)/8}\quad\text{for}\quad |x|\le\sqrt{(n-1)/8}.
\end{equation}

In order to prove this inequality we shall establish that 
\begin{equation}\label{7.13}
(n-1)\Big(1-\frac 1{\Im z}\Im\frac 1{G_{\mu}(z)}\Big)<-1
\end{equation} 
for $|z|=\frac 12\sqrt{(n-1)}$ and 
$|\Re z|\le\sqrt{(n-1)/8}$. Indeed, since the~function 
\begin{equation}\notag
-I_{\mu}(\Re z,\Im z)=(n-1)\Big(1-\frac 1{\Im z}\Im\frac 1{G_{\mu}(z)}\Big),\quad \Im z>0,
\end{equation}
is negative and monotone, and increases to $0$
as $\Im z\to\infty$, (\ref{7.12}) follows from (\ref{7.13}).

We have, for the~$z$ considered above,
\begin{equation}\notag
\frac 1{G_{\mu}(z)}=\Big(\frac 1{z}+\frac{r(z)}{z^2}\Big)^{-1}=z-r(z)+r_1(z),
%\Big|\frac 1{z^2}+\frac{r(z)}{z^2}\Big|\le \frac 5{4(n-1)}+\frac{3\beta_3}{(n-1)^{3/2}}\le \frac 2{n-1}. 
\end{equation}
where $r_1(z)$ admits the~upper bound $|r_1(z)|\le 2(|z|(\Im z)^2)^{-1}\le 32/(n-1)^{3/2}$.

Using the~previous formula, we easily obtain the~relation, for the~same $z$,
\begin{equation}\label{7.14}
-I_{\mu}(\Re z,\Im z)=(n-1)\frac{\Im r(z)}{\Im z}+r_2(z)=
-(n-1)\int_{\mathbb R}\frac{u^2\,\mu(du)}{(u-\Re z)^2+(\Im z)^2}+r_2(z),
\end{equation}
where $r_2(z)$ admits the~upper bound $|r_2(z)|\le 32\sqrt 8/(n-1)<1/6$.
Hence we have, for the~same $z$,
\begin{align}
-I_{\mu}(\Re z,\Im z)&\le -(n-1)\int_{[-\Im z,\Im z]}\frac{u^2\,\mu(du)}{(u-\Re z)^2+(\Im z)^2}+r_2(z)\notag\\
&\le-\frac{n-1}{3|z|^2}(1-\rho_2(\mu,\Im z))
+r_2(z)\le -\frac{n-1}{3|z|^2}(1-\rho_2(\mu,\sqrt{(n-1)/8}))+r_2(z)\notag\\
&<-\frac 65+\frac 16<-1\notag
\end{align}
and (\ref{7.13}) is proved.

The~assertion of the~lemma follows immediately from (\ref{7.12}).
\end{proof}

Denote by $\Delta(\kappa',\kappa'')$ the~Kolmogorov
distance between the~finite signed measures $\kappa'$ and $\kappa''$ 
such that $\kappa'((-\infty,x))\to 0$ and $\kappa''((-\infty,x))\to 0$ as $x\to-\infty$, i.e.,
$$
\Delta(\kappa',\kappa''):=\sup_{x\in\mathbb R}|\kappa'((-\infty,x))-\kappa''((-\infty,x))|.
$$

We need the~following result of Bercovici-Voiculescu~\cite{BeVo:1993}.
\begin{proposition}\label{3.3b}
If $\mu,\mu',\nu$ and $\nu'$ are probability measures, then
\begin{equation}\notag
\Delta(\mu\boxplus\nu,\mu'\boxplus\nu')\le\Delta(\mu,\mu')+\Delta(\nu,\nu').
\end{equation}
\end{proposition}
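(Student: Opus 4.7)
The plan is to reduce the symmetric inequality to a one-sided contraction via the triangle inequality:
\[
\Delta(\mu\boxplus\nu,\mu'\boxplus\nu') \le \Delta(\mu\boxplus\nu,\mu'\boxplus\nu) + \Delta(\mu'\boxplus\nu,\mu'\boxplus\nu').
\]
It then suffices to prove the contraction
\[
\Delta(\mu\boxplus\nu,\mu'\boxplus\nu) \le \Delta(\mu,\mu'),
\]
and to apply the symmetric version of it to the second summand. In each of the two factors on the right, only one of the two measures being convolved has changed, so the natural tool is the subordination function associated to the varying argument.

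For the contraction, I would take an analytic route using the subordination identity of Proposition~\ref{3.3pro} and the formulas preceding it. Fix $\nu$ and write $Z,Z'\in\mathcal{F}$ for the subordination functions satisfying $G_{\mu\boxplus\nu}(z)=G_\mu(Z(z))$ and $G_{\mu'\boxplus\nu}(z)=G_{\mu'}(Z'(z))$. The defining equation $z=Z(z)+Z_2(z)-F_\mu(Z(z))$ (and its analogue for $\mu'$) shows that $(Z,Z')$ are close whenever $F_\mu$ and $F_{\mu'}$ are close on the relevant domain. Using the Nevanlinna representation to bound $|G_\mu(w)-G_{\mu'}(w)|$ in terms of $\Delta(\mu,\mu')$ for $w\in\mathbb{C}^+$, one obtains a pointwise bound of the form $|G_{\mu\boxplus\nu}(z)-G_{\mu'\boxplus\nu}(z)|\le C(\Im z)^{-1}\Delta(\mu,\mu')$, and then a Stieltjes--Perron inversion argument (integration along horizontal lines, letting the height tend to zero) converts this into the desired uniform bound on the difference of distribution functions.

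An alternative, and in some respects cleaner, route is available in the compactly supported case via Biane's operator-algebraic subordination: realize $\mu\boxplus\nu$ as the spectral distribution of $X+Y$ with $X,Y$ free, and use that the conditional expectation of a spectral projection $\chi_{(-\infty,x]}(X+Y)$ onto the von Neumann algebra generated by $Y$ is a positive operator that depends monotonically on the spectral measure of $X$. This yields the contraction directly in the compact case; a weak-convergence approximation together with lower semicontinuity of the Kolmogorov distance at continuity points of the limit then extends the inequality to arbitrary probability measures (atoms are handled by a standard perturbation).

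The main obstacle is that the subordination functions $Z$ and $Z'$ themselves depend on $\mu$ and $\mu'$, so the contraction is not a straightforward consequence of the analytic subordination identity: one must carefully decouple the variation of $Z$ from the variation of $G_\mu$, or else pass through the operator-algebraic representation where the monotonicity in the spectral data of $X$ is manifest. This decoupling is the technically delicate step and is precisely what makes the inequality nontrivial in the free setting, since there is no free analogue of a classical coupling between $X$ and $X'$ that would directly realize $\Delta(\mu,\mu')$ on a single probability space.
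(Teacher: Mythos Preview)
The paper does not prove this proposition; it is quoted from Bercovici and Voiculescu~\cite{BeVo:1993} as a known result, without argument. Your reduction via the triangle inequality to the one-sided contraction $\Delta(\mu\boxplus\nu,\mu'\boxplus\nu)\le\Delta(\mu,\mu')$ is correct and is the first step of their argument as well.

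Your analytic route, however, has a gap that you yourself flag but do not close. Integration by parts does give $|G_\mu(w)-G_{\mu'}(w)|\le \pi\,\Delta(\mu,\mu')/\Im w$, but converting a bound of order $(\Im z)^{-1}$ on the difference of Cauchy transforms back into a Kolmogorov-distance bound via Stieltjes--Perron cannot recover the sharp constant~$1$: any such smoothing-and-inversion scheme loses a multiplicative factor. More seriously, you must compare $G_\mu(Z(z))$ with $G_{\mu'}(Z'(z))$ where the subordination functions $Z$ and $Z'$ solve \emph{different} fixed-point equations, and you offer no mechanism for the ``decoupling'' you call the technically delicate step. I am not aware of any purely complex-analytic proof of the contraction with constant~$1$; the subordination machinery used elsewhere in the paper is well suited to asymptotic estimates with unspecified constants, not to sharp inequalities of this kind.

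Your operator-algebraic alternative is in the right direction and is essentially what Bercovici and Voiculescu do, but the description via ``conditional expectation of a spectral projection depending monotonically on the spectral measure of $X$'' is not the correct formulation. In the bounded case one realizes $X$ with law $\mu$ and $Y$ with law $\nu$ as free self-adjoint elements, then constructs $X'$ with law $\mu'$ \emph{inside} the abelian von Neumann algebra $W^*(X)$ by monotone rearrangement (so that $X'$ is automatically free from $Y$), and compares the spectral distribution functions of $X+Y$ and $X'+Y$ using order properties of spectral projections in a finite von Neumann algebra. That is where the constant~$1$ appears, and it genuinely relies on the operator realization rather than on Cauchy transforms. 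The extension to unbounded support by weak approximation is then routine, as you indicate.
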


In addition the~following proposition holds (see~\cite{Pe:1975}, p.139).
\begin{proposition}\label{3.3c}
If $3\le m\le k$, then the~Lyapunov fractions $L_{mn}$ and $L_{kn}$ satisfy the inequality: 
$L_{mn}^{1/(m-2)}\le L_{kn}^{1/(k-2)}$. 
\end{proposition}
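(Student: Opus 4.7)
The plan is to reduce the inequality to a standard interpolation statement about absolute moments and then to apply Hölder's inequality in the most direct way.

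First I would unpack the definitions. From $L_{mn}=\beta_m/n^{(m-2)/2}$ one computes
$$L_{mn}^{1/(m-2)}=\beta_m^{1/(m-2)}\,n^{-1/2},\qquad L_{kn}^{1/(k-2)}=\beta_k^{1/(k-2)}\,n^{-1/2},$$
so the factor $n^{-1/2}$ cancels and the claim is equivalent to the purely moment-theoretic inequality
$$\beta_m^{1/(m-2)}\le \beta_k^{1/(k-2)}\qquad(3\le m\le k).$$
Here the normalization $m_2=1$ assumed throughout Section~2 forces $\beta_2=\int u^2\,\mu(du)=m_2=1$, and this fact will play an essential role since, unlike the usual Lyapunov inequality $\beta_m^{1/m}\le\beta_k^{1/k}$, the exponents here are shifted by $2$.

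Second I would choose the natural interpolation parameter $\theta\in[0,1]$ defined by $m=2(1-\theta)+k\theta$, i.e.\ $\theta=(m-2)/(k-2)$. Writing $|u|^m=|u|^{2(1-\theta)}\cdot|u|^{k\theta}$ and applying Hölder's inequality with conjugate exponents $1/(1-\theta)$ and $1/\theta$ to the measure $\mu$ gives
$$\beta_m=\int_{\mathbb R}|u|^m\,\mu(du)\le \Bigl(\int_{\mathbb R}|u|^2\,\mu(du)\Bigr)^{1-\theta}\Bigl(\int_{\mathbb R}|u|^k\,\mu(du)\Bigr)^{\theta}=\beta_2^{1-\theta}\beta_k^{\theta}.$$
Using $\beta_2=1$ this collapses to $\beta_m\le \beta_k^{(m-2)/(k-2)}$, and raising both sides to the power $1/(m-2)$ yields the desired inequality $\beta_m^{1/(m-2)}\le\beta_k^{1/(k-2)}$.

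There is no real obstacle: the argument is a two-line application of Hölder once the role of the normalization $\beta_2=1$ is recognized. If one preferred to avoid relying on $m_2=1$ being in force, one could instead state the inequality in the form $\beta_m^{1/(m-2)}\beta_2^{-1/(m-2)+1/(k-2)\cdot(k-2)/(k-2)}\le\beta_k^{1/(k-2)}$ and handle it by the same Hölder interpolation; but since the whole section operates under $m_1=0,\,m_2=1$, the direct form above is cleanest. The only point worth noting in the write-up is that $\theta=(m-2)/(k-2)$ does lie in $[0,1]$ because $3\le m\le k$, so the Hölder exponents are admissible.
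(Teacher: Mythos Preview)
Your argument is correct. The paper does not actually supply a proof of this proposition; it simply records the inequality and refers the reader to Petrov's monograph \cite{Pe:1975}, p.~139. Your reduction to $\beta_m^{1/(m-2)}\le\beta_k^{1/(k-2)}$ via cancellation of $n^{-1/2}$, followed by H\"older interpolation with $\theta=(m-2)/(k-2)$ and the normalization $\beta_2=m_2=1$, is precisely the standard derivation one finds in that reference, so there is nothing to compare beyond noting that you have filled in what the authors left as a citation.
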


\section{Properties of free Meixner measures}

Saitoh and Yoshida~\cite{SaYo:2001} have proved that
the~absolutely continuous part of the~free Meixner measure $\mu_{a,b,d},a\in\mathbb R,b<1,d<1$, is 
given by
\begin{equation}\label{2.3g}
\frac{\sqrt{4(1-d)-(1-b)^2(x-a)^2}}{2\pi f(x)},
\end{equation}
when $a-2\sqrt{1-d}/(1-b)\le x\le a+2\sqrt{1-d}/(1-b)$, where 
\begin{equation}\notag
f(x):=bx^2+a(1-b)x+1-d;  
\end{equation} 
the~measure may have a~discrete part $\mu_D$ in the~following cases:

1. if $f(x)$ has two real roots $y_1\ne y_2$, then
\begin{equation}\label{2.3*}
\mu_D:=\lambda_1\delta_{y_1}+\lambda_2\delta_{y_2},
\end{equation}
where
\begin{equation}\label{2.3**}
\lambda_j:=\frac{1}{\sqrt{a^2(1-b)^2-4b(1-d)}}\Big(\frac{1-d}{|y_j|}
-|y_j|\Big)_+,\quad j=1,2,
\end{equation}

2. if $b=0$ and $a\ne 0$, then
\begin{equation}\label{2.3***}
\mu_D:=\Big(1-\frac{1-d}{a^2}\Big)_+\delta_y,\quad\text{where}\quad
y:=-\frac{1-d}{a}.
\end{equation}

Recall that $\delta_y$ with $y\in\mathbb R$ is a Dirac measure
concentrated at the point $y$.

Saitoh and Yoshida proved as well that for $0\le b<1$ the (centered) 
free Meixner measure
$\mu_{a,b,d}$ is $\boxplus$-infinitely divisible.
Note (see Bo\.zejko and Bryc~\cite{BoBr:2006}) that $\mu_{a,b,d}=\mu_w$ 
if $a=b=d=0$; $\mu_{a,b,d}$
is the~free Poisson type measure, which is also known as Marchenko-Pastur 
measure~\cite{MaPa:1967}, if $b=d=0$ and $a\ne 0$, and $\mu_{a,0,d}$ with $a\ne 0, d\ne 0$ 
is the~shifted free Poisson type measure ; $\mu_{a,b,d}$ is
the~free Pascal (negative binomial) type measure if  $b>0$ and $a^2(1-b)^2>4b(1-d)$;
$\mu_{a,b,d}$ is the~free gamma type measure if $b>0$ and $a^2(1-b)^2=4b(1-d)$;
$\mu_{a,b,d}$ is the~pure free Meixner type measure if $b>0$ and $a^2(1-b)^2<4b(1-d)$.

Now assume that $m_4<\infty, m_1=0,m_2=1$ and $n\ge 3m_4$. By the~well-known moment inequality 
\begin{equation}\notag
\begin{vmatrix}
1&m_1&m_2\\
m_1&m_2&m_3\\
m_2&m_3&m_4
\end{vmatrix}
\ge 0
\end{equation}
(see~\cite{Akh:1965}), we conclude that $m_4-1-m_3^2\ge 0$. Therefore the~lower bounds
$b_n\ge 0$ and $d_n>0$ hold. 
In addition
we have $|a_n|\le 1/\sqrt 3,b_n\le 1/3$ and $d_n\le 1/3$.
Consider the~measures $\mu_{a_n,b_n,d_n}$.  
These~measures 
may be the~free Pascal, the free gamma and the pure free Meixner type measures.

Let $b_n>0$. Note that the~polynomial $f_n(x)=b_n x^2+a_n(1-b_n)x+1-d_n$ has two
real roots $y_{1n}$ and $y_{2n}$ in the~case $a_n^2(1-b_n)^2-4b_n(1-d_n)>0$ and these
roots have the~same sign. By the~relation 
\begin{equation}\notag
\frac 1{|y_1|}+\frac 1{|y_2|}=\frac{|a_n|(1-b_n)}{1-d_n}\le\frac {\sqrt 3}2<1,
\end{equation}
one can deduce the~inequalities $|y_{jn}|\ge 1,\,j=1,2$. 
Using (\ref{2.3*}) and (\ref{2.3**}) we see that the~discrete part 
of $\mu_{a_n,b_n,d_n}$ is equal to zero.
Let $b_n=0$ and $a_n\ne 0$. We see from (\ref{2.3***}) that in this case 
the~discrete part of $\mu_{a_n,b_n,d_n}$ is equal to zero as well.

Thus, in the~considered case it follows from Saitoh and Yoshida's results that the~probability measure
$\mu_{a_n,b_n,d_n}$ is $\boxplus$-infinitely divisible 
and it is absolutely continuous with a~density of the~form (\ref{2.3g}) where
$a=a_n, b=b_n,d=d_n$.

Assume that $\beta_3<\infty,m_1=0,m_2=1$ and $n\ge m_3^2$, i.e., $|a_n|\le 1$. In this case the~probability measure
$\mu_{a_n,0,0}$ is absolute continuous with a~density of the~form (\ref{2.3g}) where $a=a_n,b=0,d=0$.
In addition, by Saitoh and Yoshida's results, $\mu_{a_n,0,0}$ is $\boxplus$-infinitely divisible.

\section{Formal Asymptotic expansion 
in the Free CLT}

In this section we deduce formula (\ref{2.3b}).

By Proposition~\ref{3.3pro}, there exists $Z(z)\in\mathcal F$
such that (\ref{3.10}) holds, and $F_{\mu^{n\boxplus}}(z)=F_{\mu}(Z(z))$.
Hence $F_{\mu_n}(z)=F_{\mu}(\sqrt n S_n(z))/\sqrt n,\,z\in\mathbb C^+$, 
where $S_n(z):=Z(\sqrt n z)/\sqrt n$. Using the~Voiculescu transform $\phi_{\mu}(z)$
(see (\ref{3.6}), this relation implies
that 
$$
S_n(z)=F_{\mu_n}(z)+\phi_{\mu}(\sqrt n F_{\mu_n}(z))/\sqrt n
$$
for $z\in\Gamma_{\alpha,\beta}$ with some $\alpha,\beta>0$.
On the~other hand we conclude from (\ref{3.10}) that
$$
S_n(z)=\frac zn+\frac {n-1}n F_{\mu_n}(z),\qquad z\in\mathbb C^+.
$$
The~last two equations give us
\begin{equation}\label{3.1**}
F_{\mu_n}(z)+\sqrt n\phi_{\mu}(\sqrt n F_{\mu_n}(z))=z,\qquad z
\in\Gamma_{\alpha,\beta}.
\end{equation}
Consider the~function $f(z):=z+\sqrt n\phi_{\mu}(\sqrt n z),\,z
\in\Gamma_{\alpha,\beta'}$ with some $\beta'\ge \beta$, 
and define the~function 
\begin{equation}\label{3.1*}
g(z):=\frac 12\Big(f(z)+\sqrt{f^2(z)-4}\Big),\quad z\in\Gamma_{\alpha,\beta'}, 
\end{equation}
where we choose the~branch of the~square root by the~condition
$\Im g(z)>0$ for $z\in\Gamma_{\alpha,\beta'}$.
It is easy to see that $g(z)=z(1+o(1))$ as $z\to\infty$ nontangentially
to $\mathbb R$. In addition, by (\ref{3.1**}), $g(z)$ satisfies the~relation
\begin{equation}\label{3.2*}
g(F_{\mu_n}(z))+\frac 1{g(F_{\mu_n}(z))}=f(F_{\mu_n}(z))=z,
\qquad z\in\Gamma_{\alpha,\beta'}. 
\end{equation}
We deduce from (\ref{3.2*}) that 
$$
g(F_{\mu_n}(z))=\frac 12\Big(z+\sqrt{z^2-4}\Big)=F_{\mu_w}(z),\qquad 
z\in\Gamma_{\alpha,\beta'}.
$$
Recall that we denote by $\mu_w$ the~semicircle measure.

Since the~function $g(z)$ has a~right inverse $g^{(-1)}(z)$ in
$\Gamma_{\alpha,\beta''}$ with some $\beta''\ge \beta'$, we have
\begin{equation}\label{3.3*}
F_{\mu_n}(z)=g^{(-1)}(F_{\mu_w}(z)),\qquad 
z\in\Gamma_{\alpha,\beta'''},\quad\text{where}\quad \beta'''\ge \beta''.
\end{equation}

Let $\mu\in\mathcal M$ such that all moments of $\mu$ exist. In addition let
$m_1=0$ and $m_2=1$.
Consider the~formal power series in $z$
\begin{equation}\label{3.4*}
\sqrt n\phi_{\mu}(\sqrt n z):=\sum_{k=1}^{\infty}
\frac{\alpha_{k+1}}{n^{(k-1)/2}z^k}, 
\end{equation}
where $\alpha_k,\,k=1,2,\dots$, are free cumulants of the~measure $\mu$
and the~formal power series of $g$:
\begin{equation}\label{3.4**}
g(z)=z+\sum_{k=0}^{\infty}\frac{a_k}{z^k}.
\end{equation} 
In our case $\alpha_1=0,\alpha_2=1,\alpha_3=m_3$ and $\alpha_4=
m_4-2$. Using
(\ref{3.2*}) and (\ref{3.4*}), (\ref{3.4**}) we obtain the~following 
relation for the~considered formal power series
$$
z+\sum_{k=0}^{\infty}\frac{a_k}{z^k}+\frac 1z\Big(1-
\sum_{k=0}^{\infty}\frac{a_k}{z^{k+1}}+\Big(\sum_{k=0}^{\infty}\frac{a_k}
{z^{k+1}}\Big)^2
-\dots\Big)=z+\sum_{k=1}^{\infty}\frac{\alpha_{k+1}}{n^{(k-1)/2}z^k}.
$$
It follows from this relation that $a_0=0$, $a_1=0$ and 
\begin{equation}\label{3.4*a}
a_k-a_{k-2}+\sum_{s=0}^{k-3}a_sa_{k-s-3}-\dots+(-1)^{k-1}a_0^{k-1}
=\frac{\alpha_{k+1}}{n^{(k-1)/2}},\quad k=2,3,\dots.
\end{equation}
We have from (\ref{3.4*a}) the~relations $a_2=\alpha_3/\sqrt n,\,a_3=\alpha_4/n$.
In addition we obtain from (\ref{3.4*a}) by induction that
\begin{equation}\label{3.4*b}
a_{2s}=\frac{\alpha_3}{\sqrt n}+O\Big(\frac 1{n^{3/2}}\Big),\quad\text{and}\quad
a_{2s+1}=\frac{\alpha_4}n-\frac {(s-1)(s-2)}2\frac{\alpha_3^2}{n}
+O\Big(\frac 1{n^{3/2}}\Big)
\end{equation}
as $n\to\infty$ for $s=2,\dots$.

Now consider the~formal power series for the~right inverse $g^{(-1)}(z)$
\begin{equation}\notag
g^{(-1)}(z)=z+\sum_{k=0}^{\infty}\frac{b_k}{z^k}.
\end{equation}
Rewrite the~relation $g(g^{(-1)}(z))=z$ in the~form
\begin{equation}\notag
z+\sum_{m=0}^{\infty}\frac{b_m}{z^m}+\sum_{k=2}^{\infty}\frac{a_k}
{z^k\Big(1+\sum_{m=0}^{\infty}\frac{b_m}{z^{m+1}}\Big)^k}=z.
\end{equation}
Using the~formula
\begin{equation}\notag
\frac 1{(1+w)^k}=\sum_{s=0}^{\infty}(-1)^{s}{k-1+s\choose k-1} w^s,
\end{equation}
we finally get
\begin{equation}\notag
\sum_{m=0}^{\infty}\frac{b_m}{z^m}+\sum_{k=2}^{\infty}\frac{a_k}{z^k}
\sum_{s=0}^{\infty}(-1)^{s}{k-1+s\choose k-1}\Big(\sum_{m=0}^{\infty}
\frac{b_m}{z^{m+1}}\Big)^s=0.
\end{equation}
We obtain from this equality that $b_0=b_1=0, b_2=-a_2$ and
\begin{equation}\label{3.4*bb}
b_m+a_m+\sum_{k=2}^{m-1}a_k
\sum_{s=1}^{m-k}(-1)^{s}{k-1+s\choose k-1}\sum_{m_1+\dots+m_s=m-k-s}
b_{m_1}\dots b_{m_s}=0,\quad m=3,\dots.
\end{equation}
Moreover it is easy to deduce from (\ref{3.4*b}) and (\ref{3.4*bb}) that
\begin{equation}\label{3.4*c}
b_{2m}=-a_{2m}+O\Big(\frac 1{n^{3/2}}\Big)=-\frac{\alpha_3}{\sqrt n}
+O\Big(\frac 1{n^{3/2}}\Big)
\end{equation}
and  
\begin{align}\label{3.4*d}
b_{2m-1}&=-a_{2m-1}+2\sum_{s=1}^{m-2}sa_{2s}b_{2m-2-2s}+O\Big(\frac 1{n^{3/2}}\Big)\notag\\
&=-\frac{\alpha_4}n-\frac{(m-2)(m+1)}2\frac{\alpha_3^2}n
+O\Big(\frac 1{n^{3/2}}\Big),\quad m=2,\dots.
\end{align}
for $m=2,\dots$.
It remains to note that
$$
\frac 1{g^{(-1)}(z)}=\frac 1{z+\sum_{k=0}^{\infty}\frac{b_k}{z^k}}=
\frac 1z\Big(1-\sum_{k=0}^{\infty}\frac{b_k}{z^{k+1}}+\Big(
\sum_{k=0}^{\infty}\frac{b_k}{z^{k+1}}\Big)^2-\dots\Big)
$$
and we can write the~formal power series in $1/\sqrt n$
$$
\frac 1{g^{(-1)}(z)}=\frac 1z+\sum_{k=1}^{\infty}\frac{B_k(1/z)}{n^{k/2}}.
$$

Taking into account the~relations (\ref{3.4*c}) and (\ref{3.4*d}), 
we easily conclude that
$$
B_1(1/z)=\alpha_3\sum_{m=2}^{\infty}\frac 1{z^{2m}}=\alpha_3
\frac 1{z^3}\cdot\frac 1{z-1/z}
$$
and
\begin{equation}
B_2(1/z)=\alpha_4\sum_{m=2}^{\infty}\frac 1{z^{2m+1}}+\alpha_3^2\Big(
\sum_{m=2}^{\infty}\frac{(m-2)(m+1)}2\frac 1{z^{2m+1}}
+\frac 1{z^3}\Big(\sum_{m=1}^{\infty}\frac 1{z^{2m}}\Big)^2\Big).\notag
\end{equation}
Since
\begin{align}
\sum_{m=2}^{\infty}\frac m{z^{2m+1}}&=-\frac 12\Big(\sum_{m=2}^{\infty}\frac 1{z^{2m}}\Big)'
=-\frac 12\Big(\frac 1{z^2(z^2-1)}\Big)'
=\frac 1{z^3(z^2-1)}+\frac 1{z(z^2-1)^2},\notag\\
\sum_{m=2}^{\infty}\frac {m^2}{z^{2m+1}}&=-\frac 12\Big(\sum_{m=2}^{\infty}\frac m{z^{2m}}\Big)'
=\frac 1{z^3(z^2-1)}+\frac 1{z(z^2-1)^2}+\frac{2z}{(z^2-1)^3},\notag
\end{align}
we finally obtain
\begin{equation}
B_2(1/z)=\Big(\alpha_4-\alpha_3^2\Big)\frac 1{z^4}\cdot\frac 1{z-1/z}
+\alpha_3^2\Big(\frac 1{z^5}\cdot\frac{1}{(z-1/z)^2}+\frac 1{z^2}
\cdot\frac{1}{(z-1/z)^3}\Big).
\notag 
\end{equation}

In view of (\ref{3.4*a}) and (\ref{3.4*bb}), we see as well that $B_k(z)$ are 
functions of the~form
$$
B_k(1/z)=\sum c_{p,m}\frac 1{z^p}\frac 1{(z-1/z)^m}
$$ 
with real coefficients $c_{p,m}$ which depend on 
the~ free cumulants $\alpha_3,\dots,\alpha_{k+2}$ 
and do not depend on $n$. The~summation is carried out 
over a~finite set of non-negative integer pairs $(p,m)$. 
The~coefficients $c_{p,m}$ can be calculated explicitly in the~way described above for
the~coefficients $c_{p,m}$ of the~functions $B_1(1/z)$ and $B_2(1/z)$. 

Hence we deduce from (\ref{3.3*}) the~formal expansion
\begin{equation}\label{3.7*}
G_{\mu_n}(z)=G_{\mu_w}(z)+
\sum_{k=1}^{\infty}\frac{B_k(G_{\mu_w}(z))}{n^{k/2}}.
\end{equation}

Using integration by parts, it is not difficult to verify that
\begin{align}\label{3.8*}
B_1(G_{\mu_w}(z))=\frac {\alpha_3}{\sqrt{z^2-4}}G_{\mu_w}^3(z)
&=\frac {\alpha_3}{2\pi}\int_{-2}^2
\frac{x(x^2-3)}{\sqrt{4-x^2}}\,\frac{dx}{z-x}\notag\\
&=-\alpha_3\int_{-2}^2\frac 1{z-x}\,d\Big(\frac 13 U_2(x/2)
p_w(x)\Big),\quad z\in\mathbb C^+.
\end{align}
On the~other hand we see that if $\alpha_3\ne 0$ then 
the~function $B_2(G_{\mu_w}(z))$ is not the~Cauchy transform of some~signed measure.
Indeed, it is easy to see, using direct calculations, that
\begin{equation}\label{3.8*a}
B_2(G_{\mu_w}(z))=\frac{\alpha_3^2}{(z^2-4)^{3/2}}+g(z),\quad z\in\mathbb C^+,
\end{equation}
where the~function $g(z)$ is analytic on $\mathbb C^+$ and there exists finite limit,
for every $-\infty<t_1<t_2<+\infty$,
\begin{equation}\label{3.8*b}
\lim_{y\downarrow 0}\int_{t_1}^{t_2}\Im g(x+iy)\,dx.
\end{equation}
In addition we note that
\begin{equation}\label{3.8*c}
\lim_{y\downarrow 0}\int_{3/2}^{2}\Im \frac 1{((x+iy)^2-4)^{3/2}}\,dx=\infty.
\end{equation}

Assume now that $B_2(G_{\mu_w}(z))$ is a~Cauchy transform of 
a~real-valued function $\omega(x)$ of bounded variation on every bounded interval
and such that
\begin{equation}\notag
\int\limits_{-\infty}^{\infty}\frac{|d\omega(x)|}{1+|x|}<\infty.
\end{equation}
Then, by Stieltjes--Perron's inversion formula~\cite{Akh:1965}, we have
\begin{equation}\label{3.8*d}
\frac{\omega(t_2+0)-\omega(t_2-0)}2-
\frac{\omega(t_1+0)-\omega(t_1-0)}2=
\lim_{y\downarrow 0}\frac 1{\pi}\int_{t_1}^{t_2}\Im B_2(G_{\mu_w}(x+iy))\,dx. 
\end{equation}
Assuming in (\ref{3.8*d}) $t_1:=3/2$ and $t_2:=2$,
and taking into account (\ref{3.8*a})--(\ref{3.8*c}), we arrive at a~contradiction.

If $\alpha_3(\mu)=0$, then 
\begin{align}\label{3.9*}
B_2(G_{\mu_w}(z))=\frac {\alpha_4}{\sqrt{z^2-4}}G_{\mu_w}^4(z)
&=\frac {\alpha_4}{2\pi}\int_{-2}^2
\frac{x^4-4x^2+2}{\sqrt{4-x^2}}\,\frac{dx}{z-x}\notag\\
&=-\alpha_4\int_{-2}^2\frac 1{z-x}\,d\Big(\frac 14 U_3(x/2)
p_w(x)\Big),\quad z\in\mathbb C^+.
\end{align}

\section{ Edgeworth Expansion in the~Free CLT 
(the~case $\beta_q< \infty,\,q\ge 3$)}

In this section we prove Theorem~\ref{2.1th} and Corollary~\ref{2.1co}.
 
{\it Proof of Theorem~$\ref{2.1th}$}. 
Recall that we denote by $\mu_n$ the~distribution of $Y_n$ in (\ref{2.1}) for 
the~free random variables $X_j$. Our first step is to reduce the problem to the~case
of bounded free random variables.

\subsection {Passage to measures with bounded supports}
%Recall that we denote by $\mu_n$ the~distribution of $Y_n$ in (\ref{2.1}) for 
%the~free random variables $X_j$.

Let $n\in\mathcal N$. Let $\varepsilon_n\in(0,10^{-1/2}]$ be a~point at which
infimum of the~function $g_{qn2}(\varepsilon)$ from (\ref{2.4**}) is attained.
%so that $\varepsilon_n\sqrt n\to \infty$,
This means that
\begin{equation}\notag
\eta_{q2}(n):=\varepsilon_n^{4-q_2}+\frac{\rho_{q_2}(\mu,\varepsilon_n\sqrt n)}{\beta_{q_2}}\varepsilon_n^{-q_2}.
\end{equation}
Without loss of generality we assume that
\begin{equation}\label{4.0aa}
% \varepsilon_n^{3-q_1}n^{-(q_1-2)/2}\rho_{q_1}(\mu,\varepsilon_n \sqrt n)
\eta_{q2}(n) L_{q_2n}+L_{3n}<c_1,
\end{equation}
where  
%$L_{q,n}:=\beta_q/\sqrt n,q\ge 2$, is the~Lyapunov fraction defined in (\ref{2.4*}) and 
$c_1>0$ is a~sufficiently small absolute constant. By Lyapunov's inequality $\beta_3\ge m_2^{3/2}=1$, 
we obtain from (\ref{4.0aa}) that $n$ in this case has to be sufficiently
large, i.e., $n\ge c_1^{-2}\beta_3^2\ge c_1^{-2}$.

Consider free random variables 
$\tilde{X},\tilde{X}_1,\tilde{X}_2,\dots$ with distribution $\tilde{\mu}=\mathcal L(\tilde{X})$ such that 
$\tilde{\mu}([-\varepsilon_n\sqrt n,\varepsilon_n\sqrt n])=1$ and $\tilde{\mu}(B)=\mu(B)$ for all Borel sets
$B\subseteq [-\varepsilon_n\sqrt n,\varepsilon_n\sqrt n]\setminus\{0\}$. Denote by $\tilde{\mu}_n$
distribution of the~random variable 
$
\tilde{Y_n}:=(\tilde{X}_1+\dots+\tilde{X}_n)/\sqrt n.
$
In addition introduce random variables
\begin{equation}\notag
X^*:=\frac{\tilde{X}-A_n}{C_n},X_1^*:=\frac{\tilde{X_1}-A_n}{C_n},X_2^*:=\frac{\tilde{X_2}-A_n}{C_n},\dots\quad
\text{and}\quad Y_n^*:=\frac{X_1^*+\dots+X_n^*}{\sqrt n},
\end{equation}
where 
\begin{equation}\notag
A_n:=-\int_{|u|>\varepsilon_n\sqrt n}u\,\mu(du)\quad\text{and}\quad
C_n:=\Big(1-\int_{|u|>\varepsilon_n\sqrt n}u^2\,\mu(du)
-\Big(\int_{|u|>\varepsilon_n\sqrt n}u\,\mu(du)\Big)^2\Big)^{1/2}.
\end{equation}
Denote by $\mu^*$ and $\mu_n^*$
the~distributions of the~random variables $X^*$ and $Y_n^*$, respectively. We denote by $m_k^*$
and $\tilde{m}_k,\,k=0,1,\dots$, the~moments 
%of the~distributions $\mu^*$ and  $\tilde{\mu}$, respectively, 
and by $\beta_k^*$
and $\tilde{\beta}_k,\,k=0,1,\dots$, the~absolute moments of the~distributions $\mu^*$ and  $\tilde{\mu}$, respectively.
It is obvious that $m_1^*=0$ and $m_2^*=1$.
Using (\ref{4.0aa}) we note that
\begin{equation}\label{4.*a}
|A_n|\le \varepsilon_n^{-(q_2-1)}n^{-(q_2-1)/2}\rho_{q_2}(\mu,\varepsilon_n \sqrt n)
\le\frac 1{\sqrt n}\eta_{q2}(n)L_{q_2n}  
\end{equation}
and
\begin{equation}\label{4.*b}
0\le \frac 1{C_n}-1\le 2(\rho_2(\mu,\varepsilon_n\sqrt n)+A_n^2)\le 3\eta_{q2}(n)L_{q_2n}. 
\end{equation}
By (\ref{4.0aa})--(\ref{4.*b}), we obtain
\begin{equation}\label{4.*c}
C_n^{-1}(\varepsilon_n\sqrt n+|A_n|)<\frac 13\sqrt n. 
\end{equation}
It follows from (\ref{4.*c}) that the~support of $\mu^*$ is contained in $[-\frac 13\sqrt n,\frac 13\sqrt n]$.

By (\ref{4.0aa})--(\ref{4.*b}), we easily deduce as well that
\begin{align}\label{4.*d}
&|m_3^*-m_3|\le C_n^{-3}|\tilde{m}_3-m_3|+(C_n^{-3}-1)|m_3|+C_n^{-3}(3|A_n|\tilde{m}_2+3A_n^2|\tilde{m}_1|+
|A_n|^3)\notag\\
&\le C_n^{-3}|\tilde{m}_3-m_3|+4|m_3|\eta_{q2}(n)L_{q_2n}+\frac 4{\sqrt n} \eta_{q2}(n)L_{q_2n}\notag\\
%8\eta_{q2}(n)L_{q_2,n}\le 
&\le C_n^{-3}\varepsilon_n^{-(q_2-3)}n^{-(q_2-3)/2}\rho_{q_2}(\mu,\varepsilon_n \sqrt n)+4\Big(|m_3|+\frac 1{\sqrt n}\Big)
\eta_{q2}(n)L_{q_2n}\le 2\sqrt n \eta_{q2}(n)L_{q_2n},
\end{align}
and, using similar arguments,
\begin{align}\label{4.*g}
\beta_3^*\le  C_n^{-3}\tilde{\beta}_3+\frac 4{\sqrt n}\eta_{q2}(n)L_{q_2n},\quad
m_4^*\le  C_n^{-4}\tilde{m}_4+5L_{3n}\eta_{q2}(n)L_{q_2n}.
\end{align}

Let $T$ be a~random variable with 
distribution $\mu_{a_n,0,0}$. Denote by $\tilde{\mu}_{a_n,0,0}$ the~distribution of $C_nT+\sqrt n A_n$.

By the~triangle inequality, we have 
\begin{equation}\label{4.0}
\Delta(\mu_n,\mu_{a_n,0,0})\le \Delta(\mu_n,\tilde{\mu}_n)+\Delta(\tilde{\mu}_n,\tilde{\mu}_{a_n,0,0})
+\Delta(\tilde{\mu}_{a_n,0,0},\mu_{a_n,0,0}).
\end{equation}
First we establish with the~help of Proposition~\ref{3.3b}
\begin{equation}\label{4.0a}
\Delta(\mu_n,\tilde{\mu}_n)\le n\Delta(\mu,\tilde{\mu})\le n\mu(\{|u|>\varepsilon_n\sqrt n\})
\le \varepsilon_n^{-q_2}n^{-(q_2-2)/2}\rho_{q_2}(\mu,\varepsilon_n \sqrt n)\le\eta_{q2}(n)L_{q_2n}.
\end{equation}

Recalling the definition of $\mu_{a_n,0,0}$ (see (\ref{2.3h}) and (\ref{2.3g}), (\ref{2.3***})), 
we note that $\mu_{a_n,0,0}$ is an~absolutely continuous measure with the~support on $[a_n-2,a_n+2]$ and its density
has the~form 
\begin{equation}\label{4.0a*}
\sqrt{4-(x-a_n)^2}/(2\pi(1+a_nx)),\quad\text{when}\quad x\in [a_n-2,a_n+2].
\end{equation}
%Here and in the~sequel we denote by $I(B)$ the~indicator function of the~set $B$. 
This density does not exceed 1 on the~set $[a_n-2,a_n+2]$ and is equal to 0 outside of this set, therefore
we easily deduce the~following upper bound, using (\ref{4.*a}) and (\ref{4.*b}),
\begin{equation}\label{4.0b}
\Delta(\tilde{\mu}_{a_n,0,0},\mu_{a_n,0,0})\le c\Big(\frac 1{C_n}-1+\frac{\sqrt n A_n}{C_n}\Big)\le
c\eta_{q2}(n)L_{q_2n}. 
\end{equation}

Finally we note that $\Delta(\tilde{\mu}_n,\tilde{\mu}_{a_n,0,0})=\Delta(\mu_n^*,\mu_{a_n,0,0})$. Our next main aim is
to estimate this quantity.

By Proposition~\ref{3.1}, 
$G_{\mu_n^*}(z)=1/F_{\mu_n^*}(z)$, $z\in\mathbb C^+$, 
where $F_{\mu_n^*}(z):=F_{\mu^*}(Z(\sqrt nz))/\sqrt n$.
Here $Z(z)\in\mathcal F$ is the~solution of 
the~equation~(\ref{3.10}) with $\mu=\mu^*$. 

Consider the~functions 
\begin{align}
&S(z):=\frac 12(z+\sqrt{z^2-4}),\quad 
S_n(z):=Z(\sqrt nz)/\sqrt n,\notag\\
&S_{n1}(z):=a_n+\frac 12\Big(z-a_n+\sqrt{(z-a_n)^2-4}\Big),\qquad
z\in\mathbb C^+.\notag
\end{align}  
Note that $1/S(z)=G_{\mu_w}(z)$, where $\mu_w$ denotes the~semicircle 
measure. Since $S_n\in\Cal F$, we saw in Section~3 that there exists
$\hat{\mu}_n\in\mathcal M$ such that $1/S_n(z)=G_{\hat{\mu}_n}(z)$. In addition,  
it is easy to see, that $1/S_{n1}(z)=G_{\mu_{a_n,0,0}}(z)$. 

In order to estimate $\Delta(\mu_n^*,\mu_{a_n,0,0})$
%prove Theorem~$\ref{2.1th}$ 
we will apply the~Stieltjes-Perron inversion formula to the~measures
$\mu_{n}^*$ and $\mu_{a_n,0,0}$. 
%with correctly chosen parameters $\varepsilon$ and $v$.
For this we need further estimates for $|G_{\mu_n^*}(z)-G_{\mu_{a_n,0,0}}(z)|$ on $\mathbb C^+$.

\subsection{The~functional equation for the~function $S_n(z)$} 
Using (\ref{2.3a}) with $\mu=\mu^*$, we write, for $z\in\mathbb C^+$,
\begin{align}\label{4.1}
Z(z)G_{\mu^*}(Z(z))&=1+\frac 1{Z^2(z)}+\frac 1{Z^2(z)}
\int_{\mathbb R}\frac{u^3\,\mu^*(du)}{Z(z)-u}\notag\\
&=1+\frac 1{Z^2(z)}+\frac {m_3^*}{Z^3(z)}
+\frac 1{Z^3(z)}\int_{\mathbb R}\frac{u^4\,\mu^*(du)}{Z(z)-u}.
\end{align}
The~equation (\ref{3.10}) with $\mu=\mu^*$ may be rewritten as
\begin{equation}\label{4.2}
G_{\mu^*}(Z(z))\Big(Z(z)-z\Big)=(n-1)(1-Z(z)G_{\mu^*}(Z(z))),
\quad z\in\mathbb C^+.
\end{equation}
By (\ref{4.1}) and the~definition of $S_n(z)$, we represent (\ref{4.2}) in the~form 
\begin{equation}\label{4.3}
\Big(1+\frac 1{nS_n^2(z)}+\frac {r_{n1}(z)}{nS_n^2(z)}\Big)(S_n(z)-z)
=-\frac{n-1}n\frac 1{S_n(z)}\Big(1+\frac {m_3+r_{n2}(z)}{\sqrt n S_n(z)}\Big), 
\end{equation}
for $z\in\mathbb C^+$, where 
\begin{equation}\label{4.3*a}
r_{n1}(z):=
\int_{\mathbb R}\frac{u^3\,\mu^*(du)}{Z(\sqrt nz)-u},\qquad
r_{n2}(z):=
\int_{\mathbb R}\frac{u^4\,\mu^*(du)}{Z(\sqrt nz)-u}+m_3^*-m_3.
\end{equation}
By (\ref{4.0aa}) and (\ref{4.*c}), we obtain from Lemma~\ref{l7.5} for $\mu=\mu^*$ the~bound 
\begin{equation}\label{4.3*}
|Z(\sqrt nz)|\ge\sqrt {(n-1)/8},\quad z\in\mathbb C^+.
\end{equation}
The~functions $r_{nj}(z),\,j=1,2$, are analytic on $\mathbb C^+$ 
and with the~help of the~inequalities (\ref{4.*b})--(\ref{4.*g}) and (\ref{4.3*}) admit the~estimates, 
for $z\in\mathbb C^+$,
\begin{align}\label{4.4}
|r_{n1}(z)|&\le\int_{|u|\le \frac 13\sqrt n}\frac{|u|^3\,\mu^*(du)}{||Z(\sqrt nz)|-|u||}\le
\frac{52\beta_3^*}{\sqrt n}\le \frac{53}{\sqrt n}\Big(\tilde{\beta}_3+\frac 4{\sqrt n}\eta_{q2}(n)L_{q_2n}\Big)
\le 54L_{3n},\notag\\ 
|r_{n2}(z)|&\le\int_{|u|\le \frac 13\sqrt n}\frac{u^4\,\mu^*(du)}{||Z(\sqrt nz)|-|u||}
+|m_3^*-m_3|\le\frac {52 m_4^*}{\sqrt n}+2\sqrt n\eta_{q2}(n)L_{q_2n}\notag\\
&\le\frac {53\tilde{m}_4}{\sqrt n}+3\sqrt n\eta_{q2}(n)L_{q_2n}.
\end{align}

We deduce from (\ref{4.3}) the~following relation
\begin{equation}\label{4.5}
S_n^3(z)-zS_n^2(z)+(1+\varepsilon_{n1}(z))S_n(z)+\varepsilon_{n2}(z)=0,
\quad z\in\mathbb C^+,
\end{equation}
where  
\begin{equation}\label{4.5*}
\varepsilon_{n1}(z):=\frac 1n r_{n1}(z)%+\frac {n-1}n r_{n2}(z)
\quad\text{and}\quad 
\varepsilon_{n2}(z):=
\frac {m_3}{\sqrt n}+r_{n3}(z):=a_n+r_{n3}(z)
\end{equation} 
with 
$$
r_{n3}(z):=\Big(1-\frac 1n\Big)\frac {r_{n2}(z)}{\sqrt n}
-\frac zn\Big(1+r_{n1}(z)\Big)-\frac{m_3}{n\sqrt n}.
$$ 
\subsection{Estimates of $\varepsilon_{n1}(z)$ and $\varepsilon_{n2}(z)$}
By (\ref{4.4}), we obtain
\begin{equation}\label{4.6}
|r_{n3}(z)|\le \frac{53\tilde{m}_4}{n}+3\eta_{q2}(n)L_{q_2n}
+\frac{|z|}n\Big(1+54L_{3n}\Big)+\frac{L_{3n}}{n},
\quad z\in\mathbb C^+.
\end{equation}

Note that $\tilde{m}_4\le\beta_{q_2}(\varepsilon_n^2 n)^{(4-q_2)/2}$.
By (\ref{4.4}), (\ref{4.6}) and the last inequality,
we have, for $z\in D_1:=\{z\in\mathbb C^+:0<\Im z\le 3,|\Re z|\le 4\}$,
%, using again (\ref{2.5}),
\begin{equation}\label{4.7}
|\varepsilon_{n1}(z)|\le 54\frac{L_{3n}}n<\frac 1{10}, 
\end{equation}
\begin{equation}\label{4.7a}
|r_{n3}(z)|\le  53\frac{\beta_{q_2}\eta_{q2}(n)}{n^{(q_2-2)/2}}+3\eta_{q2}(n)L_{q_2n}
+\frac{2|z|+L_{3n}}n
\le 56\eta_{q2}(n)L_{q_2n}+\frac{11}n
\end{equation}
and 
\begin{equation}\label{4.8}
|\varepsilon_{n2}(z)|\le 56\eta_{q2}(n)L_{q_2n}+2L_{3n}<\frac 1{10^4}.
\end{equation}

\subsection{Roots of the functional equation for $S_n(z)$}
For every fixed $z\in\mathbb C^+$,
consider the~cubic equation
$$
P(z,w):=w^3-zw^2+(1+\varepsilon_{n1}(z))w+\varepsilon_{n2}(z)=0.
$$
Denote roots of this equation by $w_j=w_j(z),\,j=1,2,3$.

We shall show that for $z\in D_1$ the~equation $P(z,w)=0$ 
has a~root, say $w_1=w_1(z)$, such that
\begin{equation}\label{4.9}
w_1=-a_n+r_{n4}(z),
\end{equation}
where 
the~quantity $r_{n4}(z)$ admits the following bound 
\begin{equation}\label{4.10}
|r_{n4}(z)|< 10^2r,\quad\text{where}\quad r:=\eta_{q2}(n)L_{q_2n}+L_{3n}^2.
\end{equation}
In addition $|w_j+a_n|\ge 10^2r,\,j=2,3$.

Indeed, introduce the~polynomials 
\begin{equation}\notag
P_1(w):=w^3-zw^2\quad\text{and}\quad P_2(w):=(1+\varepsilon_{n1}(z))w+\varepsilon_{n2}(z)
=(1+\varepsilon_{n1}(z))(w+\varepsilon_{n3}(z)), 
\end{equation}
where $\varepsilon_{n3}(z):=\varepsilon_{n2}(z)/(1+\varepsilon_{n1}(z))$.
They admit the~following estimates on the~circle $|w+a_n|=10^2r$
\begin{align}
|P_1(w)|&\le |w|^2(|w-z|)\le 2(|w+a_n|^2+a_n^2)(|w+a_n|+|z+a_n|)\notag\\
&\le 2(10^4r^2+a_n^2)(r+11/2)\le 12(10^4r^2+a_n^2) \le 24\,r
\label{4.10*a}
\end{align}
and
\begin{equation}\label{4.10*b}
|P_2(w)|=|1+\varepsilon_{n1}(z)||w+\varepsilon_{n3}(z)|\ge
(1-|\varepsilon_{n1}(z)|)|10^2r-|\varepsilon_{n3}(z)-a_n||.
\end{equation}
Since, by (\ref{4.7}), $1-|\varepsilon_{n1}(z)|\ge 9/10$ and, by (\ref{4.7})--(\ref{4.8}), 
\begin{equation}\notag
|\varepsilon_{n3}(z)-a_n|\le|r_{n3}(z)|+2|\varepsilon_{n1}(z)||\varepsilon_{n2}(z)|
\le 57\,r, 
\end{equation}
we see from (\ref{4.10*b}) that $|P_2(w)|\ge 36\,r$. This estimate and (\ref{4.10*a}) gives us the~inequality
$|P_1(w)|<|P_2(w)|$
on the~circle $|w+a_n|=10^2r$ and the~desired result follows from Rouch\'e's theorem.

\subsection{The~remaining roots $w_2$ and $w_3$ of the~equation (\ref{4.5})}
Now we shall investigate the behavior of the~roots $w=w_2(z)$ and $w=w_3(z)$.
As seen in Subsection~6.4 $w_2(z)\ne w_1(z)$ and $w_3(z)\ne w_1(z)$ for $z\in D_1$.
We shall construct a~set $D_2\subset D_1$ where $w_2(z)\ne w_3(z),\,z\in D_2$.
Since $P(z,w)=P_3(z,w)(w-w_1)$, where 
$$
P_3(z,w):=w^2-(z-w_1)w+1+\varepsilon_{n1}(z)-w_1(z-w_1),
$$ 
we see that
$w_2=w_3$ for $z\in D_1$ such that 
\begin{equation}\label{4.12}
(z-w_1)^2-4(1+\varepsilon_{n1}(z)-w_1(z-w_1))=0.
\end{equation}
We conclude from this relation that $z=\pm 2\sqrt{1+\varepsilon_{n1}(z)
+w_1^2}-w_1$. Therefore, as it is easy to see from (\ref{4.7}) and (\ref{4.9}), (\ref{4.10}), 
the relation (\ref{4.12}) does not hold for $z\in D_2$, where 
$D_2:=\{z\in\mathbb C:0<\Im z\le 3,|\Re z-a_n|\le 2-h_1\}$ and 
$h_1:=c_1^{-1/6}r$.

Hence the~roots $w_1(z),w_2(z)$ and $w_3(z)$ are distinct for $z\in D_2$.

%It is well-known (see~\cite{SaZy:1965}, p. 271) that every analytic function $w(z), z\in\mathbb C^+$, 
%satisfying the~equation $P(z,w)=0$
%is at most 3-valued and has no critical points other than the~points $z$ for which
%this equation has multiple roots; moreover, all the~critical points of the~function $w(z)$
%are algebraic. 

%Since the~equation $P(z,w)=0$ has the~unique solution $w_1(z)$ with values
%in the~circle $|w_1(z)+a_n|<10^2r$ for all $z\in D_2$, we obtain
%that $w_1(z)$ is an analytic
%single-valued function on $D_2$ such that $|w_1(z)+a_n|\le 10^2r$ on $D_2$.

Now we see that the~roots $w_2$ and $w_3$ have the~form
\begin{equation}\label{4.13}
w_{j}:=\frac 12\Big(z-w_1+(-1)^{j-1}\sqrt{g(z)}\Big),\quad j=2,3.
\end{equation}
where $g(z):=(z-w_1)^2-4-4\varepsilon_{n1}(z)+4w_1(z-w_1)\ne 0$ for 
$z\in D_2$. In this formula we choose the branch of the square 
satisfying $\sqrt{g(i)}\in\mathbb C^+$. 

Using (\ref{4.9}), we rewrite (\ref{4.13}) in the~following way
\begin{align}\label{4.15}
w_{j}&:=\frac 12\Big(z+a_n+(-1)^{j-1}\sqrt{(z+a_n)^2-4-4a_n(z+a_n)+r_{n5}(z)}
\Big)-\frac 12 r_{n4}(z)\notag\\
&=a_n+\frac 12\Big(z-a_n+(-1)^{j-1}\sqrt{(z-a_n)^2-4-4a_n^2+r_{n5}(z)}\Big)
-\frac 12 r_{n4}(z),
\end{align}
$j=1,2$, where 
\begin{equation}\notag
r_{n5}(z):=-4\varepsilon_{n1}(z)+(2z+6a_n-3r_{n4}(z))r_{n4}(z).
\end{equation}
%We see from (\ref{4.9}) and (\ref{4.15*a}) that $r_{n4}(z)$ and $r_{n5}(z)$ are single-valued analytic
%functions on $D_2$ and $r_{n4}(z)$ admits the~bound (\ref{4.10}).
From (\ref{4.7}) and (\ref{4.10}) it follows that the~following estimate holds, for $z\in D_2$,
\begin{align}\label{4.15*}
|r_{n5}(z)|&\le 4|\varepsilon_{n1}(z)|+(2|z|+6|a_n|+3|r_{n4}(z)|)|r_{n4}(z)|\notag\\
&\le 216\frac {L_{3n}}n+(10+6L_{3n}+300r)10^2r\le 1100\,r.
\end{align}
Using (\ref{4.15*}) we obtain 
\begin{equation}\label{4.15*a}
\Big|\frac{4a_n^2-r_{n5}(z)}{(z-a_n)^2-4}\Big|\le
\frac {1104\, r}{h_1}\le 1104\,c_1^{1/6}\le\frac 1{10},
\quad z\in D_2.
\end{equation}

By power series expansion of $(1+z)^{1/2},\,|z|<1$, we obtain, for $z\in D_2$,
\begin{align}
\sqrt{(z-a_n)^2-4-4a_n^2+r_{n5}(z)}=\sqrt{(z-a_n)^2-4}
+\frac{r_{n6}(z)}{\sqrt{(z-a_n)^2-4}}, \notag
\end{align}
where $|r_{n6}(z)|\le 1004 r$. By this relation, we see that, for $z\in D_2$,
\begin{align} \label{4.16}
w_j&=a_n+\frac 12\Big((z-a_n)
+(-1)^{j-1}\sqrt{(z-a_n)^2-4}\Big)\notag\\
&-\frac 12r_{n4}(z)+\frac {(-1)^{j-1}}2\frac{r_{n6}(z)}{\sqrt{(z-a_n)^2-4}},\quad j=2,3.
\end{align}

Let us show that $S_n(z)=w_3(z)$ for $z\in D_2$. By (\ref{4.0aa}), (\ref{4.9}) and (\ref{4.10}), we
see that $|w_1(z)|\le 1/6$ for $z\in D_2$.
Since $S_n(z)$ satisfies the~equation (\ref{4.5}) and,
by (\ref{4.3*}), $|S_n(z)|\ge 1/3$ for all $z\in\mathbb C^+$, we have
$S_n(z)=w_2(z)$ or $S_n(z)=w_3(z)$ for $z\in D_2$. 
%By (\ref{3.5**}), we have $|S_n(2i)|>1$.
%In addition it follows from (\ref{5.6}) and (\ref{5.11}), (\ref{5.11*}), (\ref{5.13})
%that $|w_4(2i)|>1$ and $|w_j(2i)|<1,\,j=1,2,3,5$.

First assume that, for every $z_0\in D_2$, there exists $r_0=r_0(z_0)>0$ such that $S_n(z)=w_j(z)$ 
for all $z\in D_2\cap\{|z-z_0|<r_0\}$, where $j=2$ or $j=3$. From this assumption it follows
that  $S_n(z)=w_j(z)$ for all $z\in D_2$, where $j=2$ or $j=3$. Furthermore,
it is not difficult to see that the~roots $w_2(z)$ and $w_3(z)$ 
%are single-valued functions on $D_2$ and 
admit the~estimates: $|w_2(z)|\le 4/3$ for $z\in D_2$, and $|w_3(z)|\ge 3/2$ for $z\in D_2$ and $\Im z\ge 2$.
The~analytic function $S_n(z)\in \mathcal F$, by (\ref{3.5**}), satisfies the~inequality $\Im S_n(z)\ge \Im z$.
%Moreover it satisfies the~equation (\ref{4.5}). 
Hence, under the above assumption  
%for $z\in D_2,\Im z\ge 2$ and therefore
$S_n(z)=w_3(z)$ for all $z\in D_2$.

%Assume that, for every $z_0\in D_2$, there exists $r=r(z_0)>0$ such that $S_n(z)=w_j(z)$ 
%for all $z\in D_2\cap\{|z-z_0|<r\}$, where $j=2$ or $j=3$. It follows from this assumption
%that  $S_n(z)=w_j(z)$ for all $z\in D_2$, where $j=2$ or $j=3$.
%By (\ref{3.5**}), we have $|S_n(2i)|>1$.
%In addition it follows from (\ref{5.6}) and (\ref{5.11}), (\ref{5.11*}), (\ref{5.13})
%that $|w_4(2i)|>1$ and $|w_j(2i)|<1,\,j=1,2,3,5$. Hence in this case $S_n(z)=w_4(z)$ for $z\in D_3$.
%%It is well-known, see~\cite{SaZy:1965}, p. 271, that every analytic function $w(z),z\in\mathbb C^+$, 
%%satisfying the~equation (\ref{5.4c})
%%is at most 5-valued and has no critical points other than the~points $z$ for which
%%this equation has multiple roots; moreover, all the~critical points of the~function $w(z)$
%%are algebraic. 

Now assume that the above assumption does not hold. Then there exists a point $z_0\in D_2$ such that, 
for any $r_0>0$, there exist points
$z'\in D_2$ and $z''\in D_2$ in the disc $|z-z_0|<r_0$ such that $S_n(z')=w_2(z')$ and $S_n(z'')=w_3(z'')$. 
Let for definiteness $S_n(z_0)=w_2(z_0)$. By this assumption, there exists a sequence 
$\{z_k\}_{k=1}^{\infty}$ such that $z_k\to z_0$ and
$S_n(z_k)=w_3(z_k)$. Therefore we have $w_2(z_0)=\lim_{z_k\to z_0}w_3(z_k)$. 
Using (\ref{4.16}), rewrite this relation in the form
\begin{align} 
&a_n+\frac 12\Big(z_0-a_n
-\sqrt{(z_0-a_n)^2-4}\Big)
-\frac 12\Big(r_{n4}(z_0)+\frac{r_{n6}(z_0)}{\sqrt{(z_0-a_n)^2-4}}\Big)=\notag\\
&a_n+\frac 12\Big(z_0-a_n
+\sqrt{(z_0-a_n)^2-4}\Big)
+\frac 12\lim_{z_k\to z_0}\Big(r_{n4}(z_k)-\frac{r_{n6}(z_k)}
{\sqrt{(z_k-a_n)^2-4}}\Big).\label{4.17}
\end{align}
From (\ref{4.17}) we easily conclude with $r$ as in (\ref{4.10})
\begin{align} 
c_1^{-1/12}\sqrt{r} \le |\sqrt{(z_0-a_n)^2-4}|\le 1004 (c_1^{1/12}\sqrt{r}+r), \notag
\end{align}
a contradiction for sufficiently small $c_1>0$. Hence, the first assumption holds only and 
$S_n(z)=w_3(z),\,z\in D_2$.

%Now it is not difficult to see that the~roots $w_2(z)$ and $w_3(z)$ are single-valued functions on $D_2$
%and admit the~estimates $|w_2(z)|\le 4/3$ for $z\in D_2$, and $|w_3(z)|\ge 3/2$ for $z\in D_2$ and $\Im z\ge 2$.
%The~analytic function $S_n(z)\in \mathcal F$, by (\ref{3.5**}), satisfies the~inequality $\Im S_n(z)\ge \Im z$.
%Moreover it satisfies the~equation (\ref{4.5}). Hence $S_n(z)=w_3(z)$ for $z\in D_2,\Im z\ge 2$ and therefore
%$S_n(z)=w_3(z)$ for all $z\in D_2$. 

Denote by $B_1$ the~set $[-2+h_1+a_n,2-h_1+a_n]$.

\subsection{Estimate of the~integral $\int\limits_{B_1}|G_{\mu_{a_n,0,0}}(x+i\varepsilon)
-G_{\mu_{n}^*}(x+i\varepsilon)|\,dx$ \,\,for $0<\varepsilon\le 1$}
We obtain an~estimate of this integral, using the~inequality
\begin{align}\label{4.25*} 
\int_{B_1}|G_{\mu_{a_n,0,0}}(x+i\varepsilon)-G_{\mu_{n}^*}(x+i\varepsilon)|\,dx&\le
\int_{B_1}|G_{\mu_{a_n,0,0}}(x+i\varepsilon)-G_{\hat{\mu}_{n}}(x+i\varepsilon)|\,dx\notag\\
&+\int_{B_1}|G_{\hat{\mu}_{n}}(x+i\varepsilon)-G_{\mu_{n}^*}(x+i\varepsilon)|\,dx.
\end{align}
Therefore we need to evaluate the~functions
\begin{equation}\label{4.25**} 
G_{\mu_{a_n,0,0}}(z)-G_{\hat{\mu}_n}(z)\quad\text{and}\quad
G_{\hat{\mu}_n}(z)-G_{\mu_n^*}(z)
\end{equation}
for $z\in D_2$.

For $z\in D_2$, using the~formula (\ref{4.15})
with $j=3$ for $S_n(z)$, we write
\begin{align}\label{4.26}
G_{\hat{\mu}_n}(z)-G_{\mu_{a_n,0,0}}(z)&=\frac 1{S_n(z)}-\frac 1{S_{n1}(z)}=\frac{S_{n1}(z)-S_n(z)}
{S_{n1}(z)S_n(z)}\notag\\
=\frac 1{2S_{n1}(z)S_n(z)}\Big(r_{n4}(z)
&+\frac{4a_n^2-r_{n5}(z)}{\sqrt{(z-a_n)^2-4}+\sqrt{(z-a_n)^2-4
-4a_n^2+r_{n5}(z)}}\Big).
\end{align}

%Using (\ref{4.15*}) we obtain 
%$$
%\Big|\frac{4a_n^2-r_{n5}(z)}{(z-a_n)^2-4}\Big|\le
%\frac {1104\, r}{h_1}\le 1104\,c_1^{5/6}\le\frac 1{10},
%\quad z\in D_2.
%$$
By (\ref{4.15*a}), we have, for $z\in D_2$,
\begin{align}\label{4.27}
&|\sqrt{(z-a_n)^2-4}+\sqrt{(z-a_n)^2-4-4a_n^2+r_{n5}(z)}|\notag\\=
&|\sqrt{(z-a_n)^2-4}||1+\sqrt{1-(4a_n^2-r_{n5}(z))/((z-a_n)^2-4)}|
\ge|\sqrt{(z-a_n)^2-4}|. 
\end{align}
In addition, we see from (\ref{4.3*}) that $|S_n(z)|\ge 1/3$ for
$z\in \mathbb C^+$. The~same estimate obviously holds for $|S_{n1}(z)|$.

Therefore we can conclude from (\ref{4.26}) and (\ref{4.27}) that
\begin{align}\label{4.28}
\int_{B_1}\Big|G_{\hat{\mu}_n}(x+i\varepsilon)&-G_{\mu_{a_n,0,0}}(x+i\varepsilon)\Big|\,dx=
\int_{B_1}\Big|\frac 1{S_n(x+i\varepsilon)}-\frac 1{S_{n1}(x+i\varepsilon)}\Big|\,dx
\notag\\
&\le \frac 92\int_{B_1}\Big(|r_{n4}(x+i\varepsilon)|+\frac{4a_n^2+|r_{n5}(x+i\varepsilon)|}
{|\sqrt{(x-a_n+i\varepsilon)^2-4}|}\Big)\,dx 
\end{align}
for $0<\varepsilon\le 1$.

From (\ref{4.10}) it follows at once that
\begin{equation}\label{4.29}
\int_{B_1}|r_{n4}(x+i\varepsilon)|\,dx\le 4 \cdot 10^2r,\quad \varepsilon\in(0,1].
\end{equation}

From (\ref{4.15*}) we conclude that,
for the~same $\varepsilon$,
\begin{equation}\label{4.30}
\int_{B_1}\frac{4a_n^2+|r_{n5}(x+i\varepsilon)|}{|\sqrt{(x-a_n+i\varepsilon)^2-4}|}\,dx 
\le 1104\,r\int_{B_1}\frac{dx}{\sqrt{4-(x-a_n)^2}}\le 4416\,r.
\end{equation}

It follows from (\ref{4.28})--(\ref{4.30}) that
\begin{equation}\label{4.31}
\int_{B_1}\Big|G_{\hat{\mu}_n}(x+i\varepsilon)-G_{\mu_{a_n,0,0}}(x+i\varepsilon)\Big|\,dx
\le 3\cdot 10^4r,\quad\varepsilon\in(0,1].
\end{equation}

Now we conclude from (\ref{4.1}) that
\begin{equation}\label{4.36}
G_{\mu_n^*}(z)-G_{\hat{\mu}_n}(z) =\frac{r_{n7}(z)}{S_n(z)},\quad z\in\mathbb C^+,
\end{equation}
where
\begin{equation}\label{4.37}
r_{n7}(z):=\frac 1{nS_n^2(z)}+\frac {r_{n1}(z)}{nS_n^2(z)}.
\end{equation}
Since $|S_n(z)|\ge 1/3$ for $z\in \mathbb C^+$,
we see from (\ref{4.4}) that
\begin{equation}\label{4.38}
|r_{n7}(z)|\le \frac{9(1+54L_{3n})}{n},\qquad z\in D_2.
\end{equation}
Therefore, we deduce from (\ref{4.36}) and (\ref{4.38}) the~upper bound 
\begin{equation}\label{4.39}
\int_{B_1}|G_{\mu_n^*}(x+i\varepsilon)-G_{\hat{\mu}_n}(x+i\varepsilon)|\,dx
\le\int_{B_1}\frac {|r_{n7}(x+i\varepsilon)|}{|S_n(x+i\varepsilon)|}\,dx\le \frac {2\cdot 10^2}n, 
\quad \varepsilon\in(0,1].
\end{equation}

From (\ref{4.25*}), (\ref{4.31}) and (\ref{4.39}) we finally obtain  
\begin{equation}\label{4.39*}
\int_{B_1}|G_{\mu_{a_n,0,0}}(x+i\varepsilon)-G_{\mu_{n}^*}(x+i\varepsilon)|\,dx\le  
4\cdot10^4\,r,\quad \varepsilon\in(0,1].
\end{equation}

\subsection{Application of the~Stieltjes-Perron inversion formula}
By (\ref{4.0a*}),
%Since, by (\ref{2.3g}), the~density $p_{\mu_{a_n,0,0}}$ of $\mu_{a_n,0,0}$ has the~form
%\begin{equation}\notag
%p_{\mu_{a_n,0,0}}(x)=\frac{\sqrt{4-(x-a_n)^2}}{2\pi(1+a_nx)}I([-2+a_n,2+a_n]), 
%\end{equation}
we have the~relation 
\begin{equation}\label{4.40}
\int\limits_{B_1}p_{\mu_{a_n,0,0}}(x)\,dx=1-\Big(\int\limits_{[2-h_1+a_n,2+a_n]}+\int\limits_{[-2+a_n,-2+h_1+a_n]}\Big)
\frac{\sqrt{4-(x-a_n)^2}}{2\pi(1+a_nx)}\,dx\ge 1-h_1^{3/2}. 
\end{equation}
From (\ref{4.39*}) and (\ref{4.40}) we conclude, using the~Stieltjes-Perron inversion formula,
\begin{equation}\label{4.41}
\mu_n^*(B_1)\ge 1-(4\cdot 10^4+c_1^{-1/4}r^{1/2})\,r\ge 1-(4\cdot 10^4+c_1^{1/4})\,r\ge 1-(4\cdot 10^4+1)\,r.
\end{equation}
Finally we deduce from (\ref{4.39*})--(\ref{4.41}) and the~Stieltjes-Perron inversion formula
that
\begin{equation}\label{4.42}
\Delta(\mu_n^*,\mu_{a_n,0,0})\le c\,r=c\,\Big(\eta_{q2}(n) L_{q_2n}+L_{3n}^2\Big).
\end{equation}

\subsection{Completion of the~proof of Theorem~\ref{2.1th}} 
The~statement of the~theorem follows immediately from (\ref{4.0}), (\ref{4.0a}), (\ref{4.0b})
and (\ref{4.42}).
$\square$

\vskip 0,5cm
{\it Proof of Corollary~$\ref{2.1co}$}. It is easy to see that the~assertion 
of Corollary~\ref{2.1co} follows
from Theorem~\ref{2.1th} and from the~following simple formula, for $x\in \mathbb R$ and $n\in\mathbb N$,
$$
\mu_{a_n,0,0}((-\infty,x))-\mu_w((-\infty,x))=-\frac{m_3}
{3\sqrt n}(x^2-1)p_w(x)+c\theta\Big(\frac{|m_3|}{\sqrt n}\Big)^{3/2}.
$$
%where here and in the~sequel $\theta$ is a~real-valued quantity such that $|\theta|\le 1$. $\square$
\vskip 0,5cm
{\it Proof of Corollary~$\ref{2.1aco}$}. The~assertion of Corollary~\ref{2.1aco} follows
immediately from (\ref{2.4}) and Proposition~\ref{3.3c}.
$\square$

\section{ Edgeworth Expansion in Free CLT
(the~case $\beta_q<\infty,\,q\ge 4$)}

In this section we prove Theorem~\ref{2.2th} and Corollary~\ref{2.2co}.
The~proof of the~theorem is similar to the~proof of
Theorem~\ref{2.1th} but with some essential technical differences.
Therefore we describe in detail those arguments which differ from
the~proof of Theorem~\ref{2.1th} and omit arguments which directly repeat
the~arguments of Section~6. We preserve all notations of Section~6.
Denote as well 
$$
S_{n2}(z):=a_n+\frac 12\Big(\big(1+b_n\big)(z-a_n)+
\sqrt{\big(1-b_n\big)^2(z-a_n)^2-4\big(1-d_n\big)}\Big),\quad z\in\mathbb C^+,
$$
where $a_n,\,b_n$ and  $d_n$ are defined in Section~2.
The~function $S_{n2}(z)\in\mathcal F$ and $1/S_{n2}(z)=G_{\mu_{a_n,b_n,d_n}}(z)$, where
$\mu_{a_n,b_n,d_n}$ is the~free Meixner measure with the~parameters
$a_n,\,b_n$ and  $d_n$, see (\ref{2.3****}).

{\it Proof of Theorem~$\ref{2.2th}$}. 
First we proceed to study 
\subsection {The~passage to measures with bounded supports}

Let $n\in\mathcal N$. Let $\varepsilon_n\in(0,10^{-1/2}]$ be a~point at which
the~infimum of the~function $g_{qn3}(\varepsilon)$ in (\ref{2.4**}) is attained.
This means that
\begin{equation}\label{5.0*}
\eta_{q3}(n):=\varepsilon_n^{5-q_3}+\frac{\rho_{q_3}(\mu,\varepsilon_n\sqrt n)}{\beta_{q_3}}\varepsilon_n^{-q_3}.
\end{equation}
Using this parameter $\varepsilon_n$, we define free random variables $\tilde{X},\tilde{X}_1,\tilde{X}_2,\dots$ and 
$X^*,X_1^*,X_2^*,\dots$ in the same way as in Section~6. We define
probability measures $\tilde{\mu}_n,\mu^*,\mu_n^*$ in the~same way as well. 

Without loss of generality we assume that
\begin{equation}\label{5.0}
\eta_{q3}(n) L_{q_3n}+L_{4n}<c_2,
\end{equation}
where   
$c_2>0$ is a~sufficiently small absolute constant. From (\ref{5.0}) it follows that
$n$ is sufficiently large $n>c_2^{-1}m_4\ge c_2^{-1}$. Here and in the~sequel we use Lyapunov's inequality
$1=m_2^{1/2}\le\beta_3^{1/3}\le m_4^{1/4}$.

Now we repeat the~arguments of Subsection~6.1.

Using (\ref{5.0}) we note that
\begin{equation}\label{5.*a}
|A_n|\le \varepsilon_n^{-(q_3-1)}n^{-(q_3-1)/2}\rho_{q_3}(\mu,\varepsilon_n \sqrt n)
\le\frac 1{\sqrt n}\eta_{q3}(n)L_{q_3n}  
\end{equation}
and
\begin{equation}\label{5.*b}
0\le \frac 1{C_n}-1\le 2(\rho_2(\mu,\varepsilon_n\sqrt n)+A_n^2)\le 3\eta_{q3}(n)L_{q_3n}. 
\end{equation}
By (\ref{5.*a}) and (\ref{5.*b}), we see that (\ref{4.*c}) holds and
the~support of $\mu^*$ is contained in $[-\frac 13\sqrt n,\frac 13\sqrt n]$.

Recalling (\ref{4.*d}) and (\ref{4.*g}), we easily deduce, by (\ref{5.0})--(\ref{5.*b}), that
\begin{align}\label{5.*d}
&|m_3^*-m_3|\le 2\sqrt n \eta_{q3}(n)L_{q_3n},\notag\\
&|m_4^*-m_4|\notag\\
&\le C_n^{-4}|\tilde{m}_4-m_4|+(C_n^{-4}-1)m_4+C_n^{-4}(4|A_n||\tilde{m}_3|+6A_n^2\tilde{m}_2
+4|A_n|^3|\tilde{m}_1|+A_n^4)\notag\\
&\le C_n^{-4}|\tilde{m}_4-m_4|+5m_4\eta_{q3}(n)L_{q_3n}+\frac {5|\tilde{m}_3|+1}{n} \eta_{q3}(n)L_{q_3n}\notag\\
&\le C_n^{-4}\varepsilon_n^{-(q_3-4)}n^{-(q_3-4)/2}\rho_{q_3}(\mu,\varepsilon_n \sqrt n)+5m_4\Big(1+\frac 2{n}\Big)
\eta_{q3}(n)L_{q_3n}\notag\\
&\le 2n \,\eta_{q3}(n)L_{q_3n}
\end{align}
and
\begin{align}\label{5.*g}
\beta_5^*\le  C_n^{-5}\tilde{\beta}_5+6L_{4n}\eta_{q3}(n)L_{q_3n}.
\end{align}
By the~triangle inequality we have 
\begin{equation}\label{5.1}
\Delta(\mu_n,\kappa_{n})\le \Delta(\mu_n,\tilde{\mu}_n)+\Delta(\tilde{\mu}_n,\tilde{\kappa}_{n})
+\Delta(\tilde{\kappa}_{n},\kappa_{n}),
\end{equation}
where, for $x\in\mathbb R$, 
\begin{align}
\tilde{\kappa}_n((-\infty,x))&:=\tilde{\mu}_{a_n,b_n,d_n}((-\infty,x))+\frac 1n\tilde{\varsigma}_n((-\infty,x))\notag\\
&:=\mu_{a_n,b_n,d_n}((-\infty,(x-\sqrt n A_n)/C_n))+\frac 1n\varsigma_n((-\infty,(x-\sqrt n A_n)/C_n)).\notag
\end{align}
Note that $\Delta(\tilde{\mu}_n,\tilde{\kappa}_{n})=\Delta(\mu_n^*,\kappa_n)$.

First we establish with the~help of Proposition~\ref{3.3b}
\begin{equation}\label{5.1a}
\Delta(\mu_n,\tilde{\mu}_n)\le n\Delta(\mu,\tilde{\mu})\le n\mu(\{|u|>\varepsilon_n\sqrt n\})
\le \varepsilon_n^{-q_3}n^{-(q_3-2)/2}\rho_{q_3}(\mu,\varepsilon_n \sqrt n)\le\eta_{q3}(n)L_{q_3n}. 
\end{equation}

We saw in Section~4 that, for $n\ge 3m_4$, $\mu_{a_n,b_n,d_n}$ is an~absolutely continuous
measure with support on the~set $B_2:=[a_n-2/e_n,a_n+2/e_n]$
and density of the~form
\begin{equation}\label{5.1aa}
p_{\mu_{a_n,b_n,d_n}}(x):=\frac {\sqrt{4(1-d_n)-(1-b_n)^2(x-a_n)^2}}{2\pi(b_nx^2+a_n(1-b_n)x+1-d_n)},
\quad x\in B_2.
\end{equation}
%where $B_2:=\Big[a_n-\frac{2\sqrt{1-d_n}}{1-b_n},a_n+\frac{2\sqrt{1-d_n}}{1-b_n}\Big]$.
This density does not exceed $1$ on the~set $B_2$
and is equal $0$ outside of this set.

The~signed measure $\varsigma_n$ has density $p_{\varsigma_n}$, see (\ref{2.5a***}), 
which does not exceed $1$ by modulus on the~set $B_2$
and is equal to zero outside of $B_2$.

Therefore, in view of (\ref{5.0}), a~simple calculation shows that
\begin{align}\label{5.1b}
\Delta(\tilde{\kappa}_n,\kappa_n)&\le \Delta(\tilde{\mu}_{a_n,b_n,d_n},\mu_{a_n,b_n,d_n})
+\frac 1n \Delta(\tilde{\varsigma}_n,\varsigma_n)
\le c\Big(\frac 1{C_n}-1+\frac{\sqrt n A_n}{C_n}\Big)\notag\\
&\le c\varepsilon_n^{-(q_3-1)}n^{-(q_3-2)/2}\rho_{q_3}(\mu,\varepsilon_n \sqrt n)\le c\eta_{q3}(n)L_{q_3n}.
\end{align}

Our next aim is to estimate the~quantity $\Delta(\mu_n^*,\kappa_n)$.
In order to estimate $\Delta(\mu_n^*,\kappa_{n})$ 
we need to apply the~inversion formula to $\mu_{n}^*$ and $\kappa_{n}$. 
We shall now derive the~necessary estimates for $|G_{\mu_n^*}(z)-G_{\kappa_{n}}(z)|$ on $\mathbb C^+$.

\subsection{The~functional equation for $S_n(z)$.}
Using (\ref{2.3a}) with $\mu=\mu^*$, we write, for $z\in\mathbb C^+$,
\begin{equation}\label{5.2}
Z(z)G_{\mu^*}(Z(z))=1+\frac 1{Z^2(z)}+\frac {m_3^*}{Z^3(z)}
+\frac {m_4^*}{Z^4(z)}+\frac 1{Z^4(z)}
\int_{\mathbb R}\frac{u^5\,\mu^*(du)}{Z(z)-u}.
\end{equation}
By (\ref{5.2}) and the~definition of $S_n(z)$, the~equation (\ref{3.10}) with $\mu=\mu^*$ 
may be rewritten as
\begin{align}\label{5.3}
&\Big(1+\frac 1{nS_n^2(z)}+\frac{m_3^*}{n^{3/2}S_n^3(z)}+
\frac {m_4^*+\zeta_{n1}(z)}{n^2S_n^4(z)}\Big)(S_n(z)-z)\notag\\
&=-\frac{n-1}n\frac 1{S_n(z)}\Big(1+\frac{m_3^*}{\sqrt n S_n(z)}
+\frac {m_4^*+\zeta_{n1}(z)}{n S_n^2(z)}\Big) 
\end{align}
for $z\in\mathbb C^+$, where 
$
\zeta_{n1}(z):=\int_{\mathbb R}\frac{u^5\,\mu^*(du)}{Z(\sqrt nz)-u}.
$

We deduce from (\ref{5.3}) the~following relation, for $z\in\mathbb C^+$,
\begin{equation}\label{5.5}
S_n^5(z)-zS_n^4(z)+S_n^3(z)
+\frac{\zeta_{n2}(z)}{\sqrt n}S_n^2(z)
+\frac{\zeta_{n3}(z)}n S_n(z)-\frac{\zeta_{n4}(z)z}{n^2} =0,
\end{equation}
where $\zeta_{n2}(z):=m_3^*-z/\sqrt n$, $\zeta_{n3}(z)(z):=m_4^*+\zeta_{n1}(z)-zm_3^*/\sqrt n$ and
$\zeta_{n4}(z)(z):=m_4^*+\zeta_{n1}(z)$.
Note that the~functions $\zeta_{nj}(z),j=1,2,3,4$, are analytic on $\mathbb C^+$.

\subsection{Estimates of the~functions $\zeta_{nj}(z),\,j=1,2,3$ on the~set $D_1$.}  
From (\ref{5.0}) and (\ref{4.*c}) we deduce, using Lemma~\ref{l7.5}, that (\ref{4.3*}) holds.
Therefore, in view of (\ref{5.*b}) and (\ref{5.*g}), we arrive at the~estimate
\begin{align}\label{5.4}
|\zeta_{n1}(z)|&\le\int_{|u|\le\frac 13\sqrt n}\frac{|u|^5\,\mu^*(du)}{||Z(\sqrt nz)|-|u||}\le
\frac{52\beta_5^*}{\sqrt n}\le 53\,\frac{\tilde{\beta}_5}{\sqrt n}+312\,\frac{L_{4,n}}{\sqrt n}\eta_{q3}(n)L_{q_3n}\notag\\
&\le 53\,\frac{\beta_{q_3}(\varepsilon_n^2 n)^{(5-q_3)/2}}{\sqrt n}+312\,\frac{L_{4n}}{\sqrt n}\eta_{q3}(n)L_{q_3n}
\le 54\,n\,\eta_{q3}(n)L_{q_3n},
\quad z\in\mathbb C^+.
\end{align}

For $z\in D_1$, by (\ref{5.*d}) and (\ref{5.4}), we get the~bounds
\begin{align}\label{5.4a}
&\frac{|\zeta_{n2}(z)|}{\sqrt n}\le 2(L_{3n}+\eta_{q3}(n)L_{q_3n}),\notag\\
&\frac{|\zeta_{n3}(z)|}{n}\le\frac{m_4^*+|\zeta_{n1}(z)|}{n}+\frac{5|m_3^*|}{n^{3/2}} \le 2L_{4n}+
56\,\eta_{q3}(n) L_{q_3n}\notag\\
&\frac{|\zeta_{n4}(z)|}{n}\le\frac{m_4^*+|\zeta_{n1}(z)|}{n}\le L_{4n}+56\,\eta_{q3}(n) L_{q_3n}.
\end{align}

\subsection{The~roots of the functional equation (\ref{5.5}) for $S_n(z)$}
For every fixed $z\in\mathbb C^+$
consider the~equation
\begin{equation}\label{5.4c}
Q(z,w):=w^5-zw^4+w^3+\frac{\zeta_{n2}(z)}{\sqrt n} w^2
+\frac{\zeta_{n3}(z)}n w-\frac{\zeta_{n4}(z)z}{n^2} =0.
\end{equation}

%It is well-known (see~\cite{SaZy:1965}, p. 271) that the equation (\ref{5.4c}) 
%is satisfied by at least one analytic function in the domain $D_1$.
%%It is well-known (see~\cite{SaZy:1965}, p. 271) that 
%Every analytic function $w(z),z\in\mathbb C^+$, 
%satisfying this equation, 
%is at most 5-valued and has no critical points other than the~points $z$ for which
%this equation has multiple roots; moreover, all the~critical points of the~function $w(z)$
%are algebraic. 

Denote the~roots of the equation (\ref{5.4c}) by $w_j=w_j(z),\,j=1,\dots,5$.
Let us show that for every fixed $z\in D_1$ 
the~equation $Q(z,w)=0$ 
has three roots, say $w_j=w_j(z),\,j=1,2,3$, such that
\begin{equation}\label{5.6}
|w_j|<r':=15(L_{4n}+\eta_{q3}(n) L_{q_3n})^{1/2},
\quad j=1,2,3,
\end{equation}
two roots, say $w_j,\,j=4,5$, such that $|w_j|\ge r'$ for $j=4,5$.
Recalling (\ref{5.0}) 
we see that 
the~bound $r'<\frac 1{100}$ holds.

Consider the~polynomials
$$
Q_1(z,w):=w^5-zw^4+\frac {\zeta_2(z)}{\sqrt n} w^2
+\frac{\zeta_{n3}(z)}n w-\frac{\zeta_{n4}(z) z}{n^2} \quad\text{and}\quad
Q_2(w):=w^3.
$$ 
The~following estimates hold on the~circle $|w|=r'$ for $z\in D_1$: $|w|^5=(r')^2|w|^3
\le 10^{-4}|w|^3$, and $|zw^4|=|z|r'|w|^3\le\frac 1{20}|w|^3$.
Since $n\,r'\ge 15\sqrt{m_4}\ge 15m_2=15$ and, by Proposition~\ref{3.3c}, $r'\ge 15L_{3n}$,  
we have as well, using (\ref{5.4a}),
\begin{align}
&\frac{|\zeta_{n2}(z)|}{\sqrt n}|w^2|=\frac{|\zeta_{n2}(z)|}{\sqrt n}\frac 1{r'}|w|^3
\le\frac{|w|^3}5,\notag\\ 
&\frac{|\zeta_{n3}(z)|}{n}|w|=\frac{|\zeta_{n3}(z)|}{n}\frac 1{(r')^2}
|w|^3\le\frac{4|w|^3}{15},\notag\\
&\frac{|\zeta_{n4}(z)|}{n^2}|z|=\frac{|\zeta_{n4}(z)|}{n^2}
\frac {|z|}{(r')^3}|w|^3\le \frac{4|z|}{15nr'}|w|^3\le\frac{4|w|^3}{45}.\notag
\end{align}
We see from the~last five inequalities that
$|Q_1(z,w)|\le \frac 89|Q_2(w)|$ on the~circle $|w|=r'$.  
Therefore, by Rouch\'e's theorem, we obtain that the~polynomial
$Q_1(z,w)+Q_2(w)$ has only three roots which are less than $r'$ in modulus, 
as claimed.  

%By (\ref{5.6}), the functions $w_j(z),\,j=1,2,3$, are continuous functions in $D_1$.

Represent $Q(z,w)$ in the~form
$$
Q(z,w)=(w^2+s_1w+s_2)(w^3+g_1w^2+g_2w+g_3),
$$
where $w^3+g_1w^2+g_2w+g_3=(w-w_1)(w-w_2)(w-w_3)$.
From this formula we derive the~relations
\begin{align}\label{5.7}
s_1+g_1=-z,\quad &s_2+s_1g_1+g_2=1,\quad s_2g_1+s_1g_2+g_3=
\frac{\zeta_{n2}(z)}{\sqrt n},
\notag \\
&s_2g_2+s_1g_3=\frac {\zeta_{n3}(z)}n,\quad s_2g_3=-\frac{\zeta_{n4}(z)z}{n^2}.
\end{align}
By Vieta's formulae and (\ref{5.6}), note that
\begin{equation}\label{5.8a}
|g_1|\le 3r',\quad |g_2|\le 3(r')^2,\quad
|g_3|\le (r')^3.
\end{equation}
Now we obtain from (\ref{5.7}) and (\ref{5.8a}) the~following bounds,
for $z\in D_1$,
\begin{equation}\label{5.8}
|s_1|\le 5+3r',\quad |1-s_2|\le 3r'(4r'+5)\le 16r'\le\frac 12.
\end{equation}
Then we conclude from (\ref{5.*d}), (\ref{5.4a}), (\ref{5.7})--(\ref{5.8}) that, for the~same $z$,
\begin{align}\label{5.9}
\Big|g_2-\frac{\zeta_{n4}(z)}{n}\Big| &\le \Big|g_2-\frac{\zeta_{n3}(z)}{n}\Big|+\frac{|m_3^*||z|}{n^{3/2}}
\le \frac{|s_1|}{|s_2|}|g_3|+\frac{|s_2-1|}{|s_2|}\frac{|\zeta_{n3}(z)|}n+(r')^3\notag\\
&\le 11(r')^3+8(r')^3+(r')^3=20(r')^3.
\end{align}

Denote $a_n^*:=m_3^*/\sqrt n$, $\rho_n^*:=L_{4n}^*-1/n:=(m_4^*-1)/n$ and $\rho_n:=(m_4-1)/n$ .
By (\ref{5.*d}), it is easy to see that
\begin{align}
|a_n-a_n^*|=\frac{|m_3-m_3^*|}{\sqrt n}
\le 2\eta_{q3}(n)L_{q_3n}\,\,\text{and}\,\,
|\rho_n-\rho_n^*|=\frac{|m_4-m_4^*|}n
\le 2\eta_{q3}(n)L_{q_3n}.\label{5.9*}
\end{align}

From the~first three relations in (\ref{5.7}) it follows that
\begin{equation}\label{5.9a}
g_1+zg_1^2=a_n+\rho_n z+\zeta_{n5}(z),
\end{equation}
where 
$$
\zeta_{n5}(z):=\frac{\zeta_{n1}(z)z}{n}
+\Big(g_2-\frac{\zeta_{n4}(z)}n\Big)z-g_1^3+2g_1g_2-g_3-(a_n-a_n^*)-(\rho_n-\rho_n^*)z.
$$
By (\ref{5.4}), (\ref{5.8a}), (\ref{5.9}) and (\ref{5.9*}), we get  the~following estimate, for $z\in D_1$,
\begin{align}\label{5.9b}
|\zeta_{n5}(z)|&\le \frac{|\zeta_{n1}(z)z|}{n}+\Big|g_2-\frac{\zeta_{n4}(z)}n\Big||z|
+|g_1^3|+2|g_1g_2|+|g_3|+|a_n-a_n^*|+|\rho_n-\rho_n^*|\notag\\
&\le 274\eta_{q3}(n)L_{q_3n}+146(r')^3\le 8\cdot 10^5\Big(\eta_{q3}(n)L_{q_3n}+L_{4n}^{3/2}\Big).
\end{align}
Rewrite (\ref{5.9a}) in the~form
$$
g_1(1+a_n z)=a_n+\rho_n z+(a_n+\rho_nz)\Big(\frac 1{1+g_1z}-1\Big)+a_n g_1z
+\frac{\zeta_{n5}(z)}{1+g_1 z}.
$$
Taking into account (\ref{5.8a}), (\ref{5.9b}) and Proposition~\ref{3.3c}  
this relation leads us to the~bound, for $z\in D_1$,
\begin{align}\label{5.10}
&|g_1-a_n-(\rho_n-a_n^2)z|\notag\\
&\le \frac{|a_n^3 z^2|}{|1+a_n z|}+\frac{|a_n\rho_n z^2|}{|1+a_n z|}
+\frac{|a_n g_1^2z^2|}{|1+a_n z||1+g_1z|}
+\frac{|\rho_n g_1z^2|}{|1+a_n z||1+g_1z|}+
\frac{|\zeta_{n5}(z)|}{|1+a_n z||1+g_1 z|}\notag\\
&\le 50L_{3n}^3+50L_{3n}L_{4n}+500L_{3n}(r')^2
+150L_{4n}r'+16\cdot 10^5\Big(\eta_{q3}(n)L_{q_3n}+L_{4n}^{3/2}\Big)\notag\\
&\le 18\cdot 10^5\Big(\eta_{q3}(n)L_{q_3n}+L_{4n}^{3/2}\Big).
\end{align}

To find the~roots $w_4$ and $w_5$, we need to solve the~equation
$w^2+s_1w+s_2=0$. Using (\ref{5.7}), we have, 
for $j=4,5$,
\begin{align}\label{5.11}
w_j&=\frac 12\Big(-s_1+(-1)^j\sqrt{s_1^2-4s_2}\Big)\notag\\&=\frac 12\Big(
z+g_1+(-1)^j\sqrt{(z+g_1)^2-4(1+(z+g_1)g_1-g_2)}\Big)\notag\\
&=\frac 12\Big(z+g_1+(-1)^j\sqrt{(z-g_1)^2-4-4(g_1^2-g_2)}\Big)
=\frac 12 \zeta_{n6}(z)+a_n+\notag\\
&+\frac 12\Big(\big(1+b_n\big)(z-a_n)
+(-1)^j\sqrt{\big(1-b_n\big)^2(z-a_n)^2-4\big(1-d_n\big)
+\zeta_{n7}(z)}\Big),
\end{align}
where 
\begin{align}\label{5.11a}
\zeta_{n6}(z)&:=g_1-a_n-b_n(z-a_n),\notag\\
\zeta_{n7}(z)&:=
-3\zeta_{n6}^2(z)-2\zeta_{n6}(z)(4a_n+(1+3b_n)(z-a_n))\notag\\
&+4(g_2-L_{4n})-4b_n(z-a_n)(2a_n+b_n(z-a_n)).
\end{align}
We choose the~branch of the~analytic square root according to the~condition $\Im w_4(i)\ge 0$.
Note that the roots $w_4(z)$ and $w_5(z)$ are continuous functions in $D_1$. 

\subsection{Estimates of the~functions $\zeta_{n6}(z)$ and $\zeta_{n7}(z)$ on the~set $D_1$.} 
We obtain, by (\ref{5.10}), 
\begin{equation}\label{5.11*}
|\zeta_{n6}(z)|\le 18\cdot 10^5\Big(\eta_{q3}(n)L_{q_3n}+L_{4n}^{3/2}\Big)+|a_nb_n|
\le (18\cdot 10^5+1)\Big(\eta_{q3}(n)L_{q_3n}+L_{4n}^{3/2}\Big).
\end{equation}
By (\ref{5.4}), (\ref{5.9}) and (\ref{5.9*}), we have
\begin{equation}\label{5.12}
|g_2-L_{4n}|\le 56\eta_{q3}(n)L_{q_3n}+20(r')^3\le 10^5(\eta_{q3}(n)L_{q_3n}+L_{4n}^{3/2}).
\end{equation}
Then, using (\ref{5.0}), (\ref{5.11*}) and (\ref{5.12}), we easily deduce from (\ref{5.11a})
\begin{align}\label{5.13}
|\zeta_{n7}(z)|\le &3|\zeta_{n6}(z)|^2+2|\zeta_{n6}(z)|(4|a_n|+(1+3b_n)(|z|+|a_n|))+4|g_2-L_{4n}|\notag\\
&+4b_n(|z|+|a_n|)(2|a_n|+b_n(|z|+|a_n|))\le
3\cdot 10^7\Big(\eta_{q3}(n)L_{q_3n}+L_{4n}^{3/2}\Big).
\end{align}

\subsection{The~roots $w_4$ and $w_5$}
We saw in Subsection~7.4 that $w_4(z)\ne w_j(z),\,z\in D_1$, for $j=1,2,3$. 
Returning to (\ref{5.11}), it follows from (\ref{5.13}) that $w_4(z)\ne w_5(z)$ for $z\in D_3$, where
\begin{equation}\notag
D_3:=\Big\{z\in\mathbb C:0<\Im z\le 3,|\Re z-a_n|\le \frac 2{e_n}
-h_2\Big\}
\end{equation}
where $e_n:=(1-b_n)/\sqrt{1-d_n}$ and $h_2:=c_2^{-1/6}\Big(\eta_{q3}(n)L_{q_3n}+L_{4n}^{3/2}\Big)$. 

Since the~constant $c_2>0$ is sufficiently small, we have, by (\ref{5.13}), for $z\in D_3$,
\begin{equation}\label{5.14}
|\zeta_{n7}(z)|/|((1-b_n)^2(z-a_n)^2-4(1-d_n)|\le 4\cdot 10^7c_2^{1/6}<10^{-2}. 
\end{equation}
Therefore, using power series expansion of $(1+z)^{1/2},\,|z|<1$, we obtain, for the~same $z$,
\begin{align}
\sqrt{(1-b_n)^2(z-a_n)^2-4(1-d_n)+\zeta_{n7}(z)}&=\sqrt{(1-b_n)^2(z-a_n)^2-4(1-d_n)}\notag\\
&+\frac{\zeta_{n8}(z)}{\sqrt{(1-b_n)^2(z-a_n)^2-4(1-d_n)}}, \notag
\end{align}
where $|\zeta_{n8}(z)|\le 4\cdot 10^7\Big(\eta_{q3}(n)L_{q_3n}+L_{4n}^{3/2}\Big)$. By this relation, we see that
\begin{align} \label{5.15}
w_j(z)&=a_n+\frac 12\Big(\big(1+b_n\big)(z-a_n)
+(-1)^j\sqrt{\big(1-b_n\big)^2(z-a_n)^2-4\big(1-d_n\big)}\Big)\notag\\
&+\frac 12\zeta_{n6}(z)+\frac {(-1)^j}2\frac{\zeta_{n8}(z)}{\sqrt{(1-b_n)^2(z-a_n)^2-4(1-d_n)}},\quad j=4,5,
\end{align}
for $z\in D_3$.

Let us show that $S_n(z)=w_4(z)$ for $z\in D_3$. By (\ref{5.0}) and (\ref{5.6}), we see that
$|w_j(z)|\le 1/6$ for $z\in D_3$.
Since, in view of (\ref{4.3*}), $|S_n(z)|\ge 1/3$ for all $z\in\mathbb C^+$,
we have $S_n(z)=w_4(z)$ or $S_n(z)=w_5(z)$ for $z\in D_3$. 
%By (\ref{3.5**}), we have $|S_n(2i)|>1$.
%In addition it follows from (\ref{5.6}) and (\ref{5.11}), (\ref{5.11*}), (\ref{5.13})
%that $|w_4(2i)|>1$ and $|w_j(2i)|<1,\,j=1,2,3,5$.

Assume that, for every $z_0\in D_3$, there exists $r_0=r_0(z_0)>0$ such that $S_n(z)=w_j(z)$ 
for all $z\in D_3\cap\{|z-z_0|<r\}$, where $j=4$ or $j=5$. From this assumption it follows
that  $S_n(z)=w_j(z)$ for all $z\in D_3$, where $j=4$ or $j=5$.
By (\ref{3.5**}), we have $|S_n(2i)|>1$.
In addition it follows from (\ref{5.6}) and (\ref{5.11}), (\ref{5.11*}), (\ref{5.13})
that $|w_4(2i)|>1$ and $|w_j(2i)|<1,\,j=1,2,3,5$. Hence in this case $S_n(z)=w_4(z)$ for $z\in D_3$.
%It is well-known, see~\cite{SaZy:1965}, p. 271, that every analytic function $w(z),z\in\mathbb C^+$, 
%satisfying the~equation (\ref{5.4c})
%is at most 5-valued and has no critical points other than the~points $z$ for which
%this equation has multiple roots; moreover, all the~critical points of the~function $w(z)$
%are algebraic. 

If the above assumption is not true, there exists a point $z_0\in D_3$ such that, for any $r_0>0$, there exist points
$z'\in D_3$ and $z''\in D_3$ from the disc $|z-z_0|<r_0$ such that $S_n(z')=w_4(z')$ and $S_n(z'')=w_5(z'')$. 
Let for definiteness $S_n(z_0)=w_5(z_0)$. By assumption there exists a sequence 
$\{z_k\}_{k=1}^{\infty}$ such that $z_k\to z_0$ 
and $S_n(z_k)=w_4(z_k)$. Therefore we have $w_5(z_0)=\lim_{z_k\to z_0}w_4(z_k)$. 
Rewrite this relation, using (\ref{5.15}),
\begin{align} \label{5.16}
&a_n+\frac 12\Big(\big(1+b_n\big)(z_0-a_n)
-\sqrt{\big(1-b_n\big)^2(z_0-a_n)^2-4\big(1-d_n\big)}\Big)\notag\\
&+\frac 12\Big(\zeta_{n6}(z_0)-\frac{\zeta_{n8}(z_0)}{\sqrt{(1-b_n)^2(z_0-a_n)^2-4(1-d_n)}}\Big)=\notag\\
&a_n+\frac 12\Big(\big(1+b_n\big)(z_0-a_n)
+\sqrt{\big(1-b_n\big)^2(z_0-a_n)^2-4\big(1-d_n\big)}\Big)\notag\\
&+\frac 12\lim_{z_k\to z_0}\Big(\zeta_{n6}(z_k)+\frac{\zeta_{n8}(z_k)}
{\sqrt{(1-b_n)^2(z_k-a_n)^2-4(1-d_n)}}\Big).
\end{align}
From (\ref{5.16}) we easily conclude
\begin{align} 
c_2^{-1/12}\sqrt{\eta_{q3}(n)L_{q_3n}+L_{4n}^{3/2}} 
&\le |\sqrt{\big(1-b_n\big)^2(z_0-a_n)^2-4\big(1-d_n\big)}|\notag\\
&\le 4\cdot 10^7\sqrt{\eta_{q3}(n)L_{q_3n}+L_{4n}^{3/2}}\Big(c_2^{1/12}
+\sqrt{\eta_{q3}(n)L_{q_3n}+L_{4n}^{3/2}}\Big), \notag
\end{align}
which leads to a contradiction for sufficiently small $c_2>0$. Hence our assumption holds and 
$S_n(z)=w_4(z),\,z\in D_3$.

%Since $w_4(z)\ne w_j(z),\,j=1,2,3,5$, for $z\in D_3$, the~analytic element $f(2i,z)$ with center $2i$ 
%such that $f(2i,2i)=w_4(2i)$, obtained by
%solving (\ref{5.4c}), can be continued along any curve $L\subset D_3$ as a~singled-valued
%analytic function $f(z)$. In addition it follows from (\ref{5.6}) and (\ref{5.11}), (\ref{5.11*}), (\ref{5.13})
%that $|w_4(2i)|>1$ and $|w_j(2i)|<1,\,j=1,2,3,5$. By (\ref{3.5**}), we have $|S_n(2i)|>1$. Therefore
%it is easy to see that $f(z)=S_n(z)$ and $S_n(z)=w_4(z)$ for $z\in D_3$. 

Denote by $B_3$ the~set $\Big[-\frac 2{e_n}+h_2+a_n,\frac 2{e_n}-h_2+a_n\Big]$.

\subsection{Estimate of the~integral $\int_{B_3}\big|G_{\mu_{n}^*}(x+i\varepsilon)-G_{\mu_{a_n,b_n,d_n}}(x+i\varepsilon)
-\frac 1n (G_{\mu_{a_n,b_n,d_n}}(x+i\varepsilon))^3\big|\,dx$ for $0<\varepsilon\le 1$}

We obtain an~estimate of this integral, using the~inequality
\begin{align}\label{5.17*} 
&\int_{B_3}\big|G_{\mu_{n}^*}(x+i\varepsilon)-G_{\mu_{a_n,b_n,d_n}}(x+i\varepsilon)
-\frac 1n (G_{\mu_{a_n,b_n,d_n}}(x+i\varepsilon))^3\big|\,dx\notag\\
&\le\int_{B_3}|G_{\mu_{a_n,b_n,d_n}}(x+i\varepsilon)-G_{\hat{\mu}_{n}}(x+i\varepsilon)|\,dx+\notag\\
&+\int_{B_3}\big|G_{\hat{\mu}_{n}}(x+i\varepsilon)-G_{\mu_{n}^*}(x+i\varepsilon)
-\frac 1n (G_{\mu_{a_n,b_n,d_n}}(x+i\varepsilon))^3\big|\,dx.
\end{align}
Therefore we need to evaluate the~functions $G_{\mu_{a_n,b_n,d_n}}(z)-G_{\hat{\mu}_n}(z)$
%\quad\text{and}\quad
and $G_{\hat{\mu}_n}(z)-G_{\mu_n^*}(z)-\frac 1n (G_{\mu_{a_n,b_n,d_n}}(z))^3$ for $z\in D_{3}$.
%for $z\in D_{t_2,2-\gamma_2}$.% and $\Im z=1$.

For $z\in D_3$, using the~formula (\ref{5.11})
with $j=4$ for $S_n(z)$, we write
\begin{align}\label{5.18}
&{S_{n2}(z)S_n(z)}\Big(\frac 1{S_n(z)}-\frac 1{S_{n2}(z)}\Big)
=S_{n2}(z)-S_n(z)=-\frac 12 \zeta_{n6}(z)\notag\\
&-\frac 12\frac{\zeta_{n7}(z)}{\sqrt{(1-b_n)^2(z-a_n)^2-4(1-d_n)}
+\sqrt{(1-b_n)^2(z-a_n)^2-4(1-d_n)+\zeta_{n7}(z)}}.
\end{align}

%Since the~constant $c_2>0$ is sufficiently small, we have, by (\ref{5.13}), for $z\in D_3$,
%\begin{equation}\notag
%|\zeta_{n7}(z)|/|((1-b_n)^2(z-a_n)^2-4(1-d_n)|\le 4\cdot 10^7c_2^{5/6}<10^{-2}. 
%\end{equation}
Using (\ref{5.14}) we get, for $z\in D_3$,
\begin{align}\label{5.19}
&\Big|\sqrt{(1-b_n)^2(z-a_n)^2-4(1-d_n)}
+\sqrt{(1-b_n)^2(z-a_n)^2-4(1-d_n)+\zeta_{n7}(z)}\Big|\notag\\=
&\Big|\sqrt{(1-b_n)^2(z-a_n)^2-4(1-d_n)}\Big|\Big|1+\sqrt{1+\zeta_{n7}(z)/
((1-b_n)^2(z-a_n)^2-4(1-d_n))}\Big|\notag\\
&\ge|\sqrt{(1-b_n)^2(z-a_n)^2-4(1-d_n)}|. 
\end{align}
It is easy to see that the bound $|S_n(z)|\ge 1/3,\,z\in\mathbb C^+$,
holds for $S_{n2}(z)$ as well.
Therefore in the~same way as in the~proof of (\ref{4.29}) and (\ref{4.30})
we conclude from (\ref{5.11*}), (\ref{5.13}) and (\ref{5.18}) that
%for $-2+a_n+\gamma_2\le x\le 2+a_n-\gamma_2$ and $x\ne\Re z_s,s=1,\dots,q$,
\begin{align}\label{5.20}
\int_{B_3}\Big|G_{\hat{\mu}_n}(&x+i\varepsilon)-G_{\mu_{a_n,b_n,d_n}}(x+i\varepsilon)\Big|\,dx=
\int_{B_3}\Big|\frac 1{S_n(x+i\varepsilon)}-\frac 1{S_{n2}(x+i\varepsilon)}\Big|\,dx\notag\\
&\le \frac 92\int_{B_2}\Big(|\zeta_{n6}(x+i\varepsilon)|+\frac{|\zeta_{n7}(x+i\varepsilon)|}
{|\sqrt{(1-b_n)^2(x-a_n+i\varepsilon)^2-4(1-d_n)}|}\Big)\,dx \notag\\
&\le c\,\Big(\eta_{q3}(n)L_{q_3n}+L_{4n}^{3/2}\Big).
\end{align}

Now we deduce from (\ref{4.1}) with the~probability measure $\mu^*$, involving $\varepsilon_n$
introduced in (\ref{5.0*}), the~relation
\begin{equation}\label{5.23}
G_{\mu_n^*}(z)-G_{\hat{\mu}_n}(z)-\frac 1{nS_n^3(z)}
=\frac {r_{n1}(z)}{nS_n^3(z)}, \quad z\in \mathbb C^+,
\end{equation}
where the~function $r_{n1}(z)$ is defined in (\ref{4.3*a}).
Since (\ref{4.7}) holds for $r_{n1}(z)$, we see that
\begin{equation}\label{5.24}
\frac {|r_{n1}(z)|}{n|S_n^3(z)|}\le 1458\,\frac {L_{3n}}{n},
\qquad z\in D_3. 
\end{equation}
Since, for the~same $z$,
\begin{equation}\label{5.24*}
\Big|\frac 1{S_n^3(z)}-\frac 1{S_{n2}^3(z)}\Big|
\le 2\Big|\frac 1{S_n(z)}-\frac 1{S_{n2}(z)}\Big|\Big(\frac 1{|S_n(z)|^2}+
\frac 1{|S_{n2}(z)|^2}\Big)\le 36\Big|\frac 1{S_n(z)}-\frac 1{S_{n2}(z)}\Big|,
\end{equation}
we obtain, using (\ref{5.20})--(\ref{5.24*}) and Proposition~\ref{3.3c}, that,
for $0<\varepsilon\le 1$,
\begin{align}\label{5.25}
&\int_{B_3}\Big|G_{\mu_n^*}(x+i\varepsilon)-G_{\hat{\mu}_n}(x+i\varepsilon)
-\frac 1n (G_{\mu_{a_n,b_n,d_n}}(x+i\varepsilon))^3\Big|\,dx\notag\\
&\le\int_{B_3}\Big|G_{\mu_n^*}(x+i\varepsilon)-G_{\hat{\mu}_n}(x+i\varepsilon)
-\frac 1n (G_{\hat{\mu}_{n}}(x+i\varepsilon))^3\Big|\,dx\notag\\
&+\frac 1n\int\limits_{B_3}\Big|(G_{\mu_{a_n,b_n,d_n}}(x+i\varepsilon))^3
-(G_{\hat{\mu}_{n}}(x+i\varepsilon))^3\Big|\,dx\le  
%3\cdot 10^3
c\,\Big(\eta_{q3}(n)L_{q_3n}+L_{4n}^{3/2}\Big). 
\end{align}

From (\ref{5.17*}), (\ref{5.20}) and (\ref{5.25}) we finally get, 
for $0<\varepsilon\le 1$,
\begin{align}\label{5.25*}
\int_{B_3}\big|G_{\mu_{n}^*}(x+i\varepsilon)-G_{\mu_{a_n,b_n,d_n}}(x+i\varepsilon)
&-\frac 1n (G_{\mu_{a_n,b_n,d_n}}(x+i\varepsilon))^3\big|\,dx\notag\\
&\le c\,\Big(\eta_{q3}(n)L_{q_3n}+L_{4n}^{3/2}\Big). 
\end{align}

\subsection{Application of the~Stieltjes-Perron inversion formula}

Using (\ref{5.1aa}), we have the~relation
\begin{equation}\label{5.26}
\int_{B_3}p_{\mu_{a_n,b_n,d_n}}(x)\,dx\ge 1-h_2^{3/2}.  
\end{equation}
where $p_{\mu_{a_n,b_n,d_n}}(x)$ denotes the~density $p_{\mu_{a_n,b_n,d_n}}(x)$ of the~measure $\mu_{a_n,b_n,d_n}$ 
It is not difficult to verify that
$$
(G_{\mu_{a_n,b_n,d_n}}(z))^3=\int\limits_{\mathbb R}\frac{\varsigma_{n1}(dx)}{z-x}
=\int_{\mathbb R}\frac{p_{\varsigma_{n1}}(x)\,dx}{z-x},
\quad z\in\mathbb C^+,
$$
where
\begin{align}
p_{\varsigma_{n1}}(x)&:=\frac 1{8\pi}
\sqrt{(4(1-d_{n})-(1-b_n)^2(x-a_n)^2)_+}\notag\\
&\times\frac{3((1+b_n)x+(1-b_n)a_n)^2+(1-b_n)^2(x-a_n)^2-
4(1-d_n)}{(b_nx^2+(1-b_n)a_n x+1-d_{n})^3}\notag
\end{align}
for $x\in B_2$ and $p_{\varsigma_{n1}}(x)=0$ for $x\notin B_2$.
Therefore we easily deduce the~obvious upper bounds
\begin{equation}\label{5.27}
\Big|\int_{B_3}p_{\varsigma_{n1}}(x)\,dx\Big|\le ch_2^{3/2},\quad
\int_{\mathbb R\setminus B_3}|p_{\varsigma_{n1}}(x)|\,dx\le ch_2^{3/2}\,\,\text{and}\,\,
\Delta(\varsigma_{n1},\varsigma_n)\le c\,\Big(|a_n|+L_{4n}\Big).
\end{equation}
From (\ref{5.25*})--(\ref{5.27}) and the~Stieltjes-Perron inversion formula we conclude that
\begin{equation}\label{5.28}
\mu_n^*(B_3)\ge 1-c\,\Big(\eta_{q3}(n)L_{q_3n}+L_{4n}^{3/2}\Big). 
\end{equation}
We finally conclude from (\ref{5.25*}), (\ref{5.27}), (\ref{5.28}) and the~Stieltjes-Perron inversion formula that
\begin{equation}\label{5.29}
\Delta(\mu_n^*,\kappa_{n})\le c\,\Big(\eta_{q3}(n)L_{q_3n}+L_{4n}^{3/2}\Big).
\end{equation}

\subsection{Completion of the~proof of Theorem~\ref{2.2th}}
The~statement of the~theorem follows immediately from (\ref{5.1}), (\ref{5.1a}), (\ref{5.1b}),
(\ref{5.29}) and Proposition~\ref{3.3c}.
$\square$
\vskip 0,5cm
{\it Proof of Corollary~$\ref{2.2co}$}.
Recalling the~definition of the~density $p_{\mu_{a_n,b_n,d_n}}(x)$ of the~measure $\mu_{a_n,b_n,d_n}$ 
for $n\ge c_2^{-1}m_4$, we see that
\begin{align}
&p_{\mu_{a_n,b_n,d_n}}(x+a_n)\notag\\
&=\frac 1{2\pi}\sqrt{(4(1-d_n)-(1-b_n)^2 x^2)_+}
(1+d_n-b_n-a_nx-(b_n-a_n^2)(x^2-1))\notag\\
&+c\theta(L_{4n}+a_n^2)^{3/2},\qquad x\in\mathbb R.\notag
\end{align}
In addition we have, for $x\in \mathbb R$,
\begin{align}
&\frac 1{2\pi}\int_{-\infty}^x\sqrt{(4(1-d_n)-(1-b_n)^2 u^2)_+}\,du\notag \\
&=(1-d_n+b_n)\mu_w((-\infty,x))
+\big(\frac 12 d_n-b_n\big)x\frac 1{2\pi}\sqrt{(4-x^2)_+}+c\,\theta L_{4n}^{3/2}\notag
\end{align}
and, for $x\in(-l_n,l_n)$, where $l_n:=\max\{2,2/e_n\}$,
\begin{align}
\big|\sqrt{(4(1-d_n)-(1-b_n)^2 x^2)_+}&-\sqrt{(4-x^2)_+}\big|\notag\\
&\le \frac{cL_{4n}}
{\sqrt{(4(1-d_n)-(1-b_n)^2 x^2)_+}+\sqrt{(4-x^2)_+}} .\notag
\end{align}
Using these formulae and the following obvious relations
\begin{equation}\notag
\int x\sqrt{4-x^2}\,dx=-\frac 13(4-x^2)^{3/2}\quad\text{and}\quad
\int (x^2-1)\sqrt{4-x^2}\,dx=-\frac 14x(4-x^2)^{3/2},
\end{equation}
we obtain from (\ref{2.6*}), using some simple calculations, the~representation (\ref{2.7*}).
$\square$

\section{Proof of Theorem~\ref{th7}} 

In this section we keep the notations of Sections~6, 7.
By definition of the random variable $X^*$ (see Subsection~7.1 in the case $q_3=5$) we note
that $\mu^*=\mu$ for $n>10L^2$. Hence for these $n$ we have $\mu^*_n=\mu_n$ as well.
By Lemma~\ref{l7.4}, $S_n(z)$ is continuous up to the real axis and, by Lemma~\ref{l7.5},
$|S_n(z)|\ge 1.03/3$ for $z\in \mathbb C^+\cup \mathbb R$ and for $n\ge c(\mu)$. 
Since 
\begin{equation}\notag
G_{\mu_n}(z)=\int_{\mathbb R}\frac{\mu(du)}{S_n(z)-u/\sqrt n},\qquad z\in\mathbb C^+, 
\end{equation}
where $supp (\mu)\subseteq [-L,L]$, we conclude that 
$G_{\mu_n}(z)$ is a continuous up to the real axis. By Lemmas~\ref{l7.4}, \ref{l7.5}
and (\ref{3.4}),
$\hat{\mu}_n$ and $\mu_n$ are absolutely continuous measures with continuous densities
$\hat{p}_n(x)$ and $p_n(x)$, respectively, defined by
\begin{equation}\notag
\hat{p}_n(x)=-\lim_{\varepsilon\downarrow 0}\frac 1{\pi}\Im \frac 1{S_n(x+i\varepsilon)} \quad\text{and}
\quad p_n(x)=-\lim_{\varepsilon\downarrow 0}\frac 1{\pi}\Im G_{\mu_n}(x+i\varepsilon).
\end{equation}
In addition, $\hat{p}_n(x)\le 1$ and $p_n(x)\le 2$ for all $x\in \mathbb R$ and $n\ge c_1(\mu)$.  
We will use the estimate, for $x\in\mathbb R$,   
\begin{equation}\label{loc.1}
|p_n(x)-p_{\mu_{a_n,b_n,d_n}}(x)-\frac 1n p_{\varsigma_{n1}}(x)|\le |\hat{p}_n(x)-p_{\mu_{a_n,b_n,d_n}}x)|
+|p_n(x)-\hat{p}_n(x)-\frac 1n p_{\varsigma_{n1}}(x)|. 
\end{equation}
Repeating the arguments of Subsection~7.7, we easily obtain the upper bounds, for $x\in B_3$
and $n\ge c(\mu)$,
\begin{equation}\label{loc.2}
|\hat{p}_n(x)-p_{\mu_{a_n,b_n,d_n}}x)|\le c\frac {L_{5n}+L_{4n}^{3/2}}{\sqrt{4(1-d_n)
-(1-b_n)^2(x-a_n)^2}} 
\end{equation}
and
\begin{equation}\label{loc.3}
|p_n(x)-\hat{p}_n(x)-\frac 1np_{\varsigma_{n1}}(x)|\le c\Big(\frac{L_{3n}}n
+\frac {L_{5n}+L_{4n}^{3/2}}{n\sqrt{4(1-d_n)
-(1-b_n)^2(x-a_n)^2}}\Big). 
\end{equation}

On the other hand, by the formula (\ref{5.1aa}), we have
\begin{equation}\label{loc.4}
p_{\mu_{a_n,b_n,d_n}}(x)=\Big(1+\frac 12 d_n-a_n^2-a_n(x-a_n)-(b_n-a_n^2)(x-a_n)^2\Big)
p_w(e_n(x-a_n))
%\sqrt{4-\frac{(1-b_n)^2}{1-d_n}(x-a_n)^2}
+c\theta L_{4n}^{3/2} 
\end{equation}
for $x\in B_2$, where $e_n:=(1-b_n)/\sqrt{1-d_n}$. Returning to Subsection~7.8 we note that
\begin{equation}\label{loc.5}
p_{\varsigma_{n1}}(x)=((x-a_n)^2-1)p_w(e_n(x-a_n))
%\sqrt{4-\frac{(1-b_n)^2}{1-d_n}(x-a_n)^2}
+c\theta (|a_n|+L_{4n})  
\end{equation}
for $x\in B_2$.

Applying (\ref{loc.2})--(\ref{loc.5}) to (\ref{loc.1}) we arrive at the relation
\begin{align}\label{loc.6}
%|p_n^*(x)-p_{\mu_{a_n,b_n,d_n}}(x)-\frac 1n p_{\kappa_{n1}}(x)|\le  
p_n(x)=v_n(x-a_n)
%\Big(1+\frac 12 d_n-a_n^2-\frac 1{4n}-a_n(x-a_n)-\Big(b_n-a_n^2-\frac 1{4n}\Big)(x-a_n)^2)\Big)
%p_w(E_n(x-a_n))\notag\\
%p_w\Big(\frac{(1-b_n)(x-a_n)}{(1-d_n)^{1/2}}\Big)\notag\\
+c\theta\frac {L_{5n}+L_{4n}^{3/2}}{\sqrt{4(1-d_n)
-(1-b_n)^2(x-a_n)^2}},\quad x\in B_3, 
\end{align}
where
\begin{align}\notag
v_n(x):=\Big(1+\frac 12 d_n-a_n^2-\frac 1{n}-a_nx-\Big(b_n-a_n^2-\frac 1{n}\Big)x^2\Big)
p_w(e_nx),\quad x\in\mathbb R. 
\end{align}
The statement (\ref{asden1}) of the theorem easily follows from (\ref{loc.6}). 
%The assertion
%$p_n(x)\le c(\mu),\,x\mathbb R,\,n\ge c_1(\mu)$ was proved by Bercovici and Voiculescu~\cite{Be Vo:1995}.
Thus, Theorem~\ref{th7} is completely proved.

\section{Asymptotic expansion of $\int_{\mathbb R} |p_n(x)-p_w(x)|\,dx$ as $n\to\infty$} 

In this section we shall prove Corollary~\ref{corth7.1}. Note that for free
bounded random variables the inequality (\ref{5.28}) has the form $\mu_n(B_3)\ge 1-c(L_{5n}+L_{4n}^{3/2})$.
Hence, 
%and the estimates of Bercovici and Voiculescu~\cite{BeVo:1995} and Bercovici~\cite{Vo:1993}:
%$p_n(x)\le c(\mu),\,|x|\le 2+L/\sqrt n$ and $p_n(x)=0,\,|x|\ge 2+L/\sqrt n$,  
we have 
\begin{align}\label{exp1.1}
\int_{\mathbb R} |p_n(x)-p_w(x)|\,dx=\int_{B_3} |p_n(x)-p_w(x)|\,dx+\theta\big(c|a_n|^{3/2}
+\frac {c(\mu)}{n^{3/2}}\big).
\end{align}
First let us assume that $m_3\ne 0$. Using (\ref{loc.6}), we easily conclude that
\begin{align}\notag
&\int_{B_3} |p_n(x)-p_w(x)|\,dx= \int_{[-2,2]} |(1-a_nx)p_w(x)-p_w(x+a_n)|\,dx
+\theta\big(c|a_n|^{3/2}+\frac {c(\mu)}n\big)\notag\\
&=\frac{|a_n|}{2\pi}\int_{[-2,2]} |x|\frac{|3-x^2|}{\sqrt{4-x^2}}\,dx+\theta\big(c|a_n|^{3/2}+\frac {c(\mu)}n\big)=
\frac{2|a_n|}{\pi}+\theta\big(c|a_n|^{3/2}+\frac {c(\mu)}n\big).\notag
\end{align}
Applying this relation to (\ref{exp1.1}) we arrive at the expansion (\ref{2.9}).

Let $m_3=0$. By (\ref{loc.6}), we easily get
\begin{align}
&\qquad\qquad\qquad\qquad\int_{B_3} |p_n(x)-p_w(x)|\,dx\notag\\
&=\int_{[-2,2]}\Big|p_w(x)-p_w\Big(\frac x{e_n}\Big)+\Big(\frac 12 d_n-\frac 1n-\Big(b_n
-\frac 1n\Big)\frac{x^2}{e_n^2}\Big)p_w(x)\Big|\,\frac{dx}{e_n}+\theta\frac {c(\mu)}{n^{3/2}}\notag\\
&=\frac 1{2\pi}\int_{[-2,2]}\frac{|2\Big(d_n-\frac 2n\Big)-\Big(4\Big(b_n
-\frac 1n\Big)+\frac 12 d_n-\frac 1n-b_n+\frac 12d_n\Big)x^2+\Big(b_n-\frac 1n\Big)x^4|}{\sqrt{4-x^2}}\,dx\notag\\
&+\theta\frac {c(\mu)}{n^{3/2}}=\frac{|m_4-2|}{2\pi n}\int_{[-2,2]}\frac{|2-4x^2+x^4|}{\sqrt{4-x^2}}\,dx
+\theta\frac {c(\mu)}{n^{3/2}}=\frac{2|m_4-2|}{\pi n}+\theta\frac {c(\mu)}{n^{3/2}}.\notag
\end{align}
Thus, (\ref{2.10}) is proved.

\section{Asymptotic expansion of the free entropy and the free Fisher information} 

In this section we prove Corollaries~\ref{corth7.2} and \ref{corth7.3}. First we find an asymptotic expansion of 
the logarithmic energy $E(\mu_n)$ of the measure $\mu_n$. Recall that (see~\cite{HiPe:2000})
\begin{align}\label{exp.1}
-E(\mu_n)&=\int\int_{\mathbb R^2}\log|x-y|\,\mu_n(dx)\mu_n(dy)=I_1(\mu_n)+I_2(\mu_n):=\notag\\
&=\int\int_{B_3\times B_3}\log|x-y|\,\mu_n(dx)\mu_n(dy)+
\int\int_{\mathbb R^2\setminus (B_3\times B_3)}\log|x-y|\,\mu_n(dx)\mu_n(dy).
\end{align}
Using (\ref{asden2})  
and the estimates 
%of Bercovici and Voiculescu~\cite{BeVo:1995} and Bercovici~\cite{Vo:1993}:
$p_n(x)\le 2,\,|x|\le 2+L/\sqrt n$ and $p_n(x)=0,\,|x|\ge 2+L/\sqrt n$,
%In view of the estimates $p_n(x)\le c(\mu),\,|x|\le 2-L/\sqrt n$, $p_n(x)=0,\,|x|\ge 2-L/\sqrt n$ and (\ref{5.28}), 
we conclude that
\begin{align}\label{exp.2}
|I_2(\mu_n)|&\le 4\int_{\mathbb R\setminus B_3}p_n(x)\,\int_{\mathbb R}|\log|x-y||p_n(y)\,dy\,dx\le 
c(\mu)\int_{\mathbb R\setminus B_3}p_n(x)\,dx\notag\\
&\le c(\mu)n^{-3/2}.
%(L_{5n}+L_{4n}^{3/2}). 
\end{align}
By (\ref{loc.6}), we have 
\begin{align}\label{exp.3}
I_1(\mu_n)&= \int\int_{B_3\times B_3}\log|x-y|\,v_n(x-a_n)v_n(y-a_n)\,dx\,dy
%Q_n(x)p_w\Big(\frac{(1-b_n)(x-a_n)}{(1-d_n)^{1/2}}\Big)
%Q_n(y)p_w\Big(\frac{(1-b_n)(y-a_n)}{(1-d_n)^{1/2}}\Big)\,dx\,dy\notag\\
+c(\mu)\theta(L_{5n}+L_{4n}^{3/2})\notag\\
&=\int\int_{\mathbb R^2}\log|x-y|\,v_n(x)v_n(y)\,dx\,dy
+c(\mu)\theta n^{-3/2}.
%(\eta_{q3}(n)L_{q_3n}+L_{4n}^{3/2}).
\end{align}
Recalling the definition of $v_n(x)$ we see that
\begin{align}\label{exp.4}
&\int\int_{\mathbb R^2}\log|x-y|\,v_n(x)v_n(y)\,dx\,dy=\tilde{I}_1(v_n)+\tilde{I}_2(v_n)+\tilde{I}_3(v_n)
+\tilde{I}_4(v_n)+r_n\notag\\
&:=\Big(1+\frac 12 d_n-a_n^2-\frac 1{n}\Big)^2 \int\int_{\mathbb R^2}\log|x-y|\,
p_w(e_nx)\,p_w(e_ny)\,dx\,dy\notag\\
&-2\Big(1+\frac 12 d_n-a_n^2-\frac 1{n}\Big)a_n\int\int_{\mathbb R^2}x\log|x-y|\,
p_w(e_nx)\,p_w(e_ny)\,dx\,dy\notag\\
&+a_n^2\int\int_{\mathbb R^2}xy\log|x-y|\,p_w(e_nx)\,
p_w(e_ny)\,dx\,dy\notag\\
&-2\Big(b_n-a_n^2-\frac 1{n}\Big)\int\int_{\mathbb R^2}x^2\log|x-y|\,
p_w(e_nx)\,p_w(e_ny)\,dx\,dy
+c(\mu)\theta n^{-3/2}. 
%L_{4n}^{3/2}.
\end{align}

In view of $E(\mu_w)=1/4$ and $e_n^{-2}=1-d_n+2b_n+c\theta L_{4n}^2,\,
\log e_n=\frac{d_n}2-b_n+c\theta L_{4n}^2$,
note that
\begin{align}\label{exp.5}
\tilde{I}_1(v_n)&=\Big(1+\frac 12 d_n-a_n^2-\frac 1{n}\Big)^2 e_n^{-2}
\Big(\int\int_{\mathbb R^2}\log|x-y|
p_w(x)p_w(y)\,dx\,dy-\log e_n\Big)\notag\\
&=-E(\mu_w)+\frac{a_n^2}2+c(\mu)\theta n^{-2}. 
\end{align}
Since the function $\int_{\mathbb R}\log|x-y|p_w(y)\,dy$ is even, we see that
$\tilde{I}_2(v_n)=0$. In order to calculate $\tilde{I}_3(v_n)$ we easily deduce that
\begin{equation}\notag
\int_{-2}^2 up_w(u)\log|x-u|\,du=-x+x^3/6,\quad x\in[-2,2]. 
\end{equation}
Therefore we obtain
\begin{equation}\label{exp.6}
\tilde{I}_3(v_n)=-a_n^2\int_{-2}^2xp_w(x)(x-x^3/6)\,dx+c(\mu)\theta n^{-2}=-\frac 23a_n^2+c(\mu)\theta n^{-2}. 
\end{equation}

Using the following well-known formula (see~\cite{HiPe:2000}, p. 197)
\begin{equation}\notag
\int_{-2}^2 p_w(u)\log|x-u|\,du=\frac{x^2}4-\frac 12,\quad x\in[-2,2],
\end{equation}
we deduce
\begin{equation}\notag
\int\int_{\mathbb R^2}x^2\log|x-y|\,
p_w(x)\,p_w(y)\,dx\,dy=\int_{-2}^2x^2\Big(\frac{x^2}4-\frac 12\Big)p_w(x)\,dx=0. 
\end{equation}
Therefore
\begin{equation}\label{exp.7}
\tilde{I}_4(v_n)=c(\mu)\theta n^{-2}. 
\end{equation}

By (\ref{exp.3})--(\ref{exp.7}), we arrive at the formula
\begin{equation}\notag
I_1(\mu_n)=-E(\mu_w)-\frac 16a_n^2+c(\mu)\theta n^{-3/2}. 
\end{equation}
Finally, in view of (\ref{exp.1})--(\ref{exp.2}), we get
\begin{equation}\notag
-E(\mu_n)=-E(\mu_w)-\frac 16a_n^2+c(\mu)\theta n^{-3/2}  
\end{equation}
and we obtain the assertion of Corollary~\ref{corth7.2}.

Now let us prove Corollary~\ref{corth7.3}. We shall show that the free Fisher information of the measure $\mu_n$ 
has the form (\ref{2.11}).
%\begin{equation}\label{exp.8}
%\Phi(\mu_n)=\Phi(w)-a_n^2+c(\mu)\theta n^{-3/2}.
%\end{equation}
Denote 
\begin{align}\label{exp.9}
\Phi(\mu_n)=\Phi_1(\mu_n)+\Phi_2(\mu_n):=\frac{4\pi^2}3\int_{B_3}p_n(x)^3\,dx+
\frac{4\pi^2}3\int_{\mathbb R\setminus B_3}p_n(x)^3\,dx.
\end{align}
As before we see that, by (\ref{asden2}),
\begin{equation}\label{exp.10}
\Phi_2(\mu_n)\le c(\mu)\int_{\mathbb R\setminus B_3}p_n(x)\,dx\le c(\mu)n^{-3/2}.
\end{equation}
On the other hand, by (\ref{loc.6}), we have 
\begin{equation}\label{exp.11}
\Phi_1(\mu_n)=\frac{4\pi^2}3\int_{\mathbb R}v_n(x)^3\,dx+c(\mu)\theta n^{-3/2}. 
\end{equation}
It is easy to see that the integral on the right hand-side of (\ref{exp.11}) is equal to
\begin{align}
\Big(1&+3\Big(\frac 12 d_n-a_n^2-\frac 1n\Big)\Big)e_n^{-1}\int_{\mathbb R}p_w(x)^3\,dx
-3\Big(b_n-a_n^2-\frac 1n\Big)e_n^{-3}\int_{\mathbb R}x^2p_w(x)^3\,dx \notag\\
&+3a_n^2e_n^{-3}\int_{\mathbb R}x^2p_w(x)^3\,dx+c(\mu)\theta L_{4n}^{3/2}=\frac 3{4\pi^2}(1+a_n^2)
+c(\mu)\theta n^{-3/2}.\notag
\end{align}
Therefore we finally have by (\ref{exp.9})--(\ref{exp.11})
\begin{equation}\notag
\Phi(\mu_n)=1+a_n^2+c(\mu)\theta n^{-3/2}=\Phi(\mu_w)+a_n^2
+c(\mu)\theta n^{-3/2}. 
\end{equation}
Thus, Corollary~\ref{corth7.3} is proved.

\section{Appendix 1. Proof of Theorem~\ref{2.1*th}}

In this Appendix we keep the~notations of Section~6. 

\subsection {Passage to measures with bounded supports}
Let $n\in\mathcal N$. Let $\varepsilon_n\in(0,10^{-1/2}]$ be a~point at which
the~infimum of the~function $g_{qn1}(\varepsilon)$ from (\ref{2.4**}) is attained.
This means that
\begin{equation}\notag
\eta_{q1}(n):=\varepsilon_n^{3-q_1}+\frac{\rho_{q_1}(\mu,\varepsilon_n\sqrt n)}{\beta_{q_1}}\varepsilon_n^{-q_1}.
\end{equation}
Using this parameter $\varepsilon_n$, we define free random variables $\tilde{X},\tilde{X}_1,\tilde{X}_2,\dots$ and 
$X^*,X_1^*,X_2^*,\dots$ in the same way as in Section~5. We define
probability measures $\tilde{\mu}_n,\tilde{\mu}_{w}=\tilde{\mu}_{0,0,0},\mu^*,\mu_n^*$ in the~same way as well. 

Without loss of generality we assume that
\begin{equation}\label{6.0aa}
% \varepsilon_n^{3-q_1}n^{-(q_1-2)/2}\rho_{q_1}(\mu,\varepsilon_n \sqrt n)
\eta_{q1}(n) L_{q_1n}+1/n<c_3,
\end{equation}
where  
$c_3>0$ is a~sufficiently small absolute constant.

Using (\ref{6.0aa}) we note that
\begin{equation}\label{6.*a}
|A_n|\le \varepsilon_n^{-(q_1-1)}n^{-(q_1-1)/2}\rho_{q_1}(\mu,\varepsilon_n \sqrt n)
\le\frac 1{\sqrt n}\eta_{q1}(n)L_{q_1n}  
\end{equation}
and
\begin{equation}\label{6.*b}
0\le \frac 1{C_n}-1\le 2\Big(\rho_2(\mu,\varepsilon_n\sqrt n)+A_n^2\Big)\le 3\eta_{q1}(n)L_{q_1n}. 
\end{equation}
By (\ref{6.0aa})--(\ref{6.*b}), we obtain that (\ref{4.*c}) holds and 
the~support of $\mu^*$ is contained in $[-\frac 13\sqrt n,\frac 13\sqrt n]$.
By (\ref{6.0aa})--(\ref{6.*b}), we easily deduce as well that
\begin{equation}\label{6.*c}
\beta_3^*\le C_n^{-3}\tilde{\beta}_3+\frac 4{\sqrt n}\eta_{q1}(n)L_{q_1n}. 
\end{equation}

By the~triangle inequality, we have 
\begin{equation}\label{6.0}
\Delta(\mu_n,\mu_w)\le \Delta(\mu_n,\tilde{\mu}_n)+\Delta(\tilde{\mu}_n,\tilde{\mu}_{w})+\Delta(\tilde{\mu}_w,\mu_{w}).
\end{equation}

Furthermore, we have the~following inequalities 
\begin{align}\label{6.0a}
\Delta(\mu_n,\tilde{\mu}_n)
&\le \varepsilon_n^{-q_1}n^{-(q_1-2)/2}\rho_{q_1}(\mu,\varepsilon_n \sqrt n)\le \eta_{q1}(n)L_{q_1n},\notag\\
\Delta(\tilde{\mu}_{w},\mu_{w})&\le
c\varepsilon_n^{-(q_1-1)}n^{-(q_1-2)/2}\rho_{q_1}(\mu,\varepsilon_n \sqrt n)\le c\eta_{q1}(n)L_{q_1n}.
\end{align}
Our next aim is to estimate $\Delta(\tilde{\mu}_n,\tilde{\mu}_{w})=\Delta(\mu_n^*,\mu_{w})$.

As in Section~6, let $Z(z)\in\mathcal F$ be the~solution of 
the~equation~(\ref{3.10}) with $\mu=\mu^*$. Denote $S_n(z):=Z(\sqrt n z)/\sqrt n$.

\subsection{The~functional equation for $S_n(z)$}
Using the~formula
\begin{equation}\label{6.1}
Z(z)G_{\mu^*}(Z(z))=1+\frac 1{Z^2(z)}+\frac 1{Z^2(z)}
\int_{\mathbb R}\frac{u^3\,\mu^*(du)}{Z(z)-u},
\end{equation}
and the~equation (\ref{3.10}) with $\mu=\mu^*$ we arrive at the~following functional equation for $S_n(z)$
\begin{equation}\label{6.2}
S_n^3(z)-zS_n^2(z)+(1+r_n(z))S_n(z)-(1+r_n(z))\frac zn=0,
\quad z\in\mathbb C^+, 
\end{equation}
where $r_n(z):=\int_{\mathbb R}\frac{u^3\,\mu^*(du)}{Z(\sqrt n z)-u}$. 
From (\ref{6.0aa}) and (\ref{4.*c}) we deduce, using Lemma~\ref{l7.5}, that (\ref{4.3*}) holds.
Therefore, in view of (\ref{6.*c}), we obtain
\begin{equation}\label{6.3}
|r_n(z)|\le \frac{52\beta_3^*}{\sqrt n}\le 53\,\frac{\tilde{\beta}_3}{\sqrt n}
+\frac{208}n \eta_{q1}(n)L_{q_1n}\le 54\,\eta_{q1}(n)L_{q_1n}<\frac 1{10}
\end{equation}
for $z\in\mathbb C^+$. Note that we obtain the~functional equation (\ref{6.2}) from (\ref{4.5})
replacing  $\varepsilon_{n1}(z)$ by $r_n(z)$ and $\varepsilon_{n2}(z)$ by $-(1+r_n(z))z/n$.

\subsection{The roots of the functional equation for $S_n(z)$}
For every fixed $z\in\mathbb C^+$
consider the~cubic equation
$$
P(z,w):=w^3-zw^2+(1+r_n(z))w-(1+r_n(z))\frac zn=0.
$$
As in Section~6 denote the~roots of this equation by $w_j=w_j(z),\,j=1,2,3$.
Repeating the~arguments of Subsection~6.4 we prove that
\begin{equation}\label{6.4}
w_1=\frac zn+\hat{r}_n(z),\quad\text{where}\quad |\hat{r}_n(z)|<\frac{10^3}{n^2},
\end{equation}
and $|w_j-z/n|\ge 10^3/n^2,\,j=2,3$, for $z\in D_1$. Hence $w_1\ne w_j$ for $j=2,3$ and $z\in D_1$.

As in Subsection~6.5 
we obtain that $w_2\ne w_3$ for $z\in D_4:=\{z\in\mathbb C:0<\Im z\le 3,|\Re z|\le 2-h_3\}$, where
$h_3:=c_3^{-1/6}(\eta_{q1}(n)L_{q_1n}+1/n)$. 
Hence the~roots $w_1(z),w_2(z)$ and $w_3(z)$ 
are distinct for $z\in D_4$. Moreover $w_1(z)$ 
%is an~analytic single-valued function on $D_4$,
satisfies, by (\ref{6.0aa}) and (\ref{6.4}), the~inequality 
\begin{equation}\label{6.6}
|w_1(z)|\le 6/n,\quad z\in D_4.
\end{equation}
Using the~arguments of
Subsection~6.5 we deduce the~formula~(\ref{4.13}) for $z\in D_4$ where
$g(z):=(z-w_1)^2-4-4r_n(z)+4w_1(z-w_1)\ne 0,\,z\in D_4$. 
%We note that $S_n(z)=w_3(z),\,z\in D_4$ as well. 
Then we rewrite the~formula~(\ref{4.13}) as follows 
\begin{equation}\label{6.7}
w_{j}:=\frac 12\Big(z+(-1)^{j-1}\sqrt{z^2-4+\tilde{r}_{n}(z)}\Big)-\frac 12 w_1(z),\quad j=1,2,
\end{equation}
where $\tilde{r}_n(z):=2zw_1(z)-3w_1^2(z)-4r_n(z)$. By (\ref{6.3}) and (\ref{6.6}),
this function admits the~bound, for $z\in D_4$,
\begin{equation}\label{6.8}
|\tilde{r}_n(z)|\le 10|w_1(z)|+3|w_1(z)|^2+4|r_n(z)|\le 280\Big(\eta_{q1}(n)L_{q_1n}+1/n\Big).
\end{equation}
In the same way as in Subsection~6.5 we obtain that $S_n(z)=w_3(z),\,z\in D_4$.
Denote $B_4:=[-2+h_3,2-h_3]$.

\subsection{Estimate of the~integral $\int_{B_4}|G_{\mu_{w}}(x+i\varepsilon)
-G_{\mu_{n}^*}(x+i\varepsilon)|\,dx$ for $0<\varepsilon\le 1$}

We obtain an~estimate of this integral, using the~inequality
\begin{align}\label{6.9} 
\int_{B_4}|G_{\mu_{w}}(x+i\varepsilon)-G_{\mu_{n}^*}(x+i\varepsilon)|\,dx&\le
\int_{B_4}|G_{\mu_{w}}(x+i\varepsilon)-G_{\hat{\mu}_{n}}(x+i\varepsilon)|\,dx+\notag\\
&\int_{B_4}|G_{\hat{\mu}_{n}}(x+i\varepsilon)-G_{\mu_{n}^*}(x+i\varepsilon)|\,dx.
\end{align}
Evaluating the~function $G_{\mu_{w}}(z)-G_{\hat{\mu}_n}(z)$ for $z\in D_4$ in the~same way as in 
Subsection~6.6, we arrive at the~bound 
\begin{equation}\label{6.10} 
\int_{B_4}|G_{\mu_{w}}(x+i\varepsilon)-G_{\hat{\mu}_{n}}(x+i\varepsilon)|\,dx\le
c\,\Big(\eta_{q1}(n)L_{q_1n}+1/n\Big).
\end{equation}

Now we conclude from (\ref{6.1}) that
\begin{equation}\label{6.12}
G_{\mu_n^*}(z)-G_{\hat{\mu}_n}(z) =\frac{1+r_n(z)}{nS_n^3(z)},\quad z\in\mathbb C^+.
\end{equation}
Since $|S_n(z)|\ge 1/3$ for $z\in \mathbb C^+$,
%and $\Im S_n(z)\ge \Im z$ for $z\in\mathbb C^+$, 
we see from (\ref{6.3}) and (\ref{6.12}) that
\begin{equation}\label{6.13}
\int_{B_4}|G_{\mu_n^*}(x+i\varepsilon)-G_{\hat{\mu}_n}(x+i\varepsilon)|\,dx
\le\int_{B_4}\frac {1+|r_{n}(x+i\varepsilon)|}{n|S_n(x+i\varepsilon)|^3}\,dx\le \frac {120}n, 
\quad \varepsilon\in(0,1].
\end{equation}

From (\ref{6.9}), (\ref{6.10}) and (\ref{6.13}) we finally obtain 
\begin{equation}\label{6.14}
\int_{B_4}|G_{\mu_{w}}(x+i\varepsilon)-G_{\mu_{n}^*}(x+i\varepsilon)|\,dx\le  
c\,\Big(\eta_{q1}(n) L_{q_1n}+1/n\Big),\quad \varepsilon\in(0,1].
\end{equation}

\subsection{Completion of the~proof of Theorem~\ref{2.1*th}}
Note that
\begin{equation}\label{6.15}
\int_{B_4}p_{\mu_{w}}(x)\,dx\ge 1-h_3^{3/2}. 
\end{equation}
From (\ref{6.14}) and (\ref{6.15}) we deduce, using the~Stieltjes-Perron inversion formula,
\begin{equation}\label{6.16}
\mu_n^*(B_4)\ge 1-c\,\Big(\eta_{q1}(n) L_{q_1n}+1/n\Big). 
\end{equation}
Finally we deduce from (\ref{6.14})-- (\ref{6.16}) and the~Stieltjes-Perron inversion formula
that
\begin{equation}\label{6.17}
\Delta(\mu_n^*,\mu_{w})\le c\,\Big(\eta_{q1}(n) L_{q_1n}+1/n\Big).
\end{equation}

The~statement of the~theorem follows immediately from
(\ref{6.0}), (\ref{6.0a}) and (\ref{6.17}).
$\square$
\vskip 0,5cm
{\it Proof of Corollary~$\ref{2.1*co}$}.
The~inequality~(\ref{2.4a**}) follows immediately from (\ref{2.4a*}) and the~Lyapunov inequality
$1=m_2^{1/2}\le \beta_q^{1/q}$ for $q\ge 2$.
$\square$

%\section{Appendix 2. Proof of Theorem~\ref{th7}}

%In this Appendix we keep the~notations of Section~7. 


\begin{thebibliography}{99}
\itemsep=\smallskipamount

\bibitem{Akh:1965} Akhiezer,~N. I. 
{\em The~classical moment problem and some related questions 
in analysis.}
Hafner, New York (1965).



\bibitem{AkhG:1963} Akhiezer,~N. I. and Glazman,~I. M. 
{\em Theory of Linear Operators in Hilbert Space.}
Ungar, New York (1963).

\bibitem{An:2003} Anshelevich,~M. 
\emph{Free martingale polynomials.}
J. Funct. Anal., {\bf 201}, 228--261 (2003).

\bibitem{Ba:1986} Barron, A. R.
\emph{Entropy and the central limit theorem.}
Ann. Probab., {\bf 14}, 336--342 (1986).


\bibitem{BelBer:2004} Belinschi,~S. T. and Bercovici, H.
\emph{Atoms and regularity for measures in a partially defined free convolution semigroup.}
Math. Z. {\bf 248}, 665--674 (2004).

\bibitem{BelBer:2007} Belinschi,~S. T. and Bercovici, H.
\emph{A new approach to subordination results in free probability.}
J. Anal. Math.  {\bf 101}, 357--365 (2007).

\bibitem{Bel:2008} Belinschi,~S. T. 
\emph{The Lebesgue decomposition of the free additive convolution of two probability 
distributions.}
Probab. Theory Relat. Fields  {\bf 142}, 125--150 (2008).

\bibitem{BeVo:1992} Bercovici,~H., and Voiculescu,~D.
\emph{L\'evy-Hin\v cin type theorems for multiplicative
and additive free convolution.}
Pacific journal of mathematics, {\bf 153}, 217--248 (1992). 


\bibitem{BeVo:1993} Bercovici,~H., and Voiculescu,~D.
\emph{Free convolution of measures with unbounded support.}
Indiana Univ. Math. J., {\bf 42}, 733--773 (1993).


\bibitem{BeVo:1995} Bercovici,~H., and Voiculescu,~D.
\emph{Superconvergence to the~central limit and failure
of the~Cram\'er theorem for free random variables.}
Probab. Theory Relat. Fields,
{\bf 102}, 215--222 (1995).




\bibitem{BeP:1999} Bercovici,~H., and Pata, V.
\emph{Stable laws and domains of attraction in free
probability theory.}
Annals of Math., {\bf 149}, 1023--1060, (1999).
With an appendix by Philippe Biane.


\bibitem{BeP:2000} Bercovici,~H., and Pata,~V.
\emph{A free analogue of Hin\v cin's characterization of 
infinitely divisibility.}
Proceedings of AMS, {\bf 128}, N 4, 1011--1015 (2000).


\bibitem{BeW:2006} Bercovici,~H., and Wang,~J-Ch.
\emph{The asymptotic behaviour of free additive convolution.}
Oper. Matrices 2, no 1, 115--124 (2008).




\bibitem{Ber:1968} Berezanskii,~Yu. M. 
{\em Expansions in Eigenfunctions of Selfadjoint Operators.}
Amer. Math. Soc. Providence, Rhoade Island (1968).



\bibitem{Bi:1998} Biane,~Ph.
\emph{Processes with free increments.}
Math. Z., {\bf 227}, 143--174 (1998).


\bibitem{BChG:2011} Bobkov, S. G., Chistyakov, G. P. and G\"otze, F.
\emph{Rate of convergence and Edgeworth-type expansion in the entropic
central limit theorem.}
arXiv: 1104.3994v1 [math.PR], 20 Apr 2011.

\bibitem{BoBr:2006} Bo\.zejko,~M., and Bryc,~W.
\emph{On a~class of free L\,evy
laws related to a~regression problem.}
J. Funct. Anal., {\bf 236}, 59--77 (2006).

\bibitem{BoSp:1991} Bo\.zejko,~M., and Speicher,~R.
\emph{$\psi$-independent and symmetrized white noises.}
In: Quantum Probability and Related Topics, QP-PQ, VI, World Sci. Publ.,
River Edge, NJ, 219--236 (1991).

\bibitem{BoLeSp:1996} Bo\.zejko,~M., Leinert,~M. and Speicher,~R.
\emph{Convolution and limit theorems for conditionally free random variables.}
Pacific J. Math., {\bf 175}, 357--388 (1996).

\bibitem{CaCa:2004} Capitaine,~M. and Casalis,~M.
\emph{Asymptotic freeness by generalized moments for Gaussian and 
Wishart matrices. Application to beta random matrices.}
Indiana Univ. Math. J., {\bf 53}, 397--431 (2004). 

\bibitem{ChG:2005} Chistyakov,~G. P. and G\"otze,~F.
\emph{The~arithmetic of distributions in free probability theory.}
Cent. Eur. J. Math., 9(5), 997--1050 (2011).
DOI: 10.2478/s11533-011-0049-4,
ArXiv: math/0508245.
%Preprint, University of Bielefeld, 05-001 (2005).

\bibitem{ChG:2005a} Chistyakov,~G. P. and G\"otze,~F.
\emph{Limit theorems in free probability theory, I}.
Ann. Probab., {\bf 36}, 54--90 (2008).

\bibitem{Es:1945} Esseen,~C.-G.
\emph{Fourier analysis of distributions functions. A~mathematical
study of the~Laplace--Gaussian law.}
Acta Math. {\bf 77}, 1-125 (1945).


\bibitem{GKo:1968} Gnedenko,~B. V. and Kolmogorov,~A. N.
{\em Limit distributions for sums of independent random variables.}
Addison-Wesley Publishing Company (1968).

%\bibitem{GT:2003} G\"otze,~F. and Tikhomirov,~A.
%\emph{Rate of convergence to the~semi-circular law.}
%Probab. Theory Relat. Fields, {\bf 127}, 228--276 (2003).



\bibitem{HiPe:2000} Hiai,~F. and Petz,~D. 
{\em The~semicircle law, free random variables and entropy.}
Math. Surveys Monogr., {\bf 77}, Amer. Math. Soc., Providence, RI (2000).

\bibitem{Ka:2007} Kargin,~V.
\emph{Berry--Essen for Free Random Variables.}
J. Theor. Probab., {\bf 20}, 381--395 (2007)

\bibitem{Ka:2007a} Kargin,~V.
\emph{On superconvergence of sums of free random variables.}
Anal. Probab., {\bf 35}, 1931--1949 (2007)

\bibitem{Ka:2007b} Kargin,~V.
\emph{A Proof of a Non-Commutative Central Limit Theorem
by the Lindeberg Method.} 
Electr. Communic. Probab., {\bf 12}, 36--50 (2007).


\bibitem{Ke:1959} Kesten,~H.
\emph{Symmetric random walks on groups.}
Trans. Amer. Math. Soc., {\bf 92}, 336--354 (1959).

\bibitem{Ma:1992} Maassen,~H.
\emph{Addition of Freely Independent Random Variables.}
Journal of functional analysis, {\bf 106}, 409--438 (1992).

\bibitem{MaPa:1967} Marchenko,~V. A. and Pastur,~L. A.
\emph{Distribution of eigenvalues for some sets of random matrices.}
USSR Sb., {\bf 1}, 507--536 (1967).

\bibitem{Mar:1965} Markushevich, A. I.
\emph{Theory of Functions of a~Complex Variable,} {\bf 3}.
Prentice-Hall, INC. (1965).

\bibitem{Mc:1981} McKay,~B. D.
\emph{The~expected eigenvalue distribution of a~large regular graph.}
Linear Algebra Appl., {\bf 40}, 203--216 (1981).

\bibitem{Me:1934} Meixner,~J.
\emph{Orthogonale Polynomsysteme mit einer besonderen Gestalt der
erzeugenden Funktion.}
J. London Math. Soc., {\bf 9}, 6-13 (1934).

\bibitem{P:1996} Pata,~V.
\emph{The~central limit theorem for free additive convolution.}
J. Funct. Anal., {\bf 140}, 359--380 (1996).

\bibitem{Pe:1975} Petrov,~V.~V.
{\em Sums of Independent Random Variables.}
Springer-Verlag, Berlin, Heidenberg, New York (1975).

\bibitem{SaYo:2001} Saitoh,~N. and Yoshida,~H.
\emph{The~infinite divisibility and orthogonal polynomials with a~constant
recursion formula in free probability theory.}
Probab. Math. Statist., {\bf 21}, 159--170 (2001).

%\bibitem{SaZy:1965} Saks,~S. and Zygmund,~A.
%{\em Analytic functions.}
%Warszawa (1965).


\bibitem{Sp:1998} Speicher,~R.
\emph{Combinatorical theory of the~free product with amalgamation and 
operator-valued free probability theory.}
Mem. A.M.S., {\bf 627} (1998).


\bibitem{Vo:1985} Voiculescu,~D.V.
\emph{Symmetries of some reduced free product $C^*$--algebras
Operator Algebras and their connections with Topology
and ergodic theory.}
Lecture Notes in Mathematics, {\bf 1132}, 556--588 (1985).



\bibitem{Vo:1986} Voiculescu,~D.V.
\emph{ Addition of certain noncommuting random variables.}
J. Funct. Anal., {\bf 66}, 323--346 (1986).


\bibitem{Vo:1987} Voiculescu,~D.V.
\emph{Multiplication  of certain noncommuting random variables.}
J. Operator Theory, {\bf 18}, 223--235 (1987). 


\bibitem{Vo:1992} Voiculesku,~D., Dykema,~K., and Nica,~A. 
{\em Free random variables.}
CRM Monograph Series, No 1, A.M.S., Providence, RI (1992).

\bibitem{Vo:1993} Voiculescu,~D.V.
\emph{The~analogues of entropy and Fischer's information measure in free
probability theory.}
I. Comm. Math. Phys., {\bf 155}, 71--92 (1993).

\bibitem{Vo:1997} Voiculescu,~D.V.
\emph{The analogues of entropy and of Fisher's information measure in free probability theory. IV.
Maximum entropy and freeness.}
In Free Probability Theory (Waterloo, ON, 1995). Fields Institute Communications {\bf 12},
293--302. Amer. Math. Soc., Providence, RI. 

\bibitem{Vo:2000} Voiculescu,~D.V.
\emph{Lectures on free probability theory.}
Lectures on Probability theory and Statistics. Lecture Notes in Math. 
{\bf 1738}, 279--349 (2000). Springer, Berlin.

\bibitem{Wa:2010} Wang, Ji-Ch. 
\emph{Local limit theorems in free probability theory.}
Ann. Probab., {\bf 38}, no 4, 1492--1506 (2010).
\end{thebibliography}
\end{document}